\documentclass[a4paper,10pt]{article}
\usepackage[T1]{fontenc}
\usepackage{graphicx,amsmath,amsfonts,amssymb,amsthm,framed,color,cancel
}
\usepackage{mathrsfs,eufrak}

\newcommand{\dint}{\mathrm{d}}

\newcommand{\tcond}{t_{\rm c}}
\newcommand{\texit}{t_{\rm e}}

\newtheorem{thm}{Theorem}[section]
\newtheorem{corollary}[thm]{Corollary}

\newtheorem{prop}[thm]{Proposition}

\numberwithin{equation}{section}

\title{Exact simulation of first exit times for one-dimensional diffusion processes}
\begin{document}
\author{S. Herrmann$^1$ and C. Zucca$^2$\\[5pt]
\small{$^1$Institut de Math{\'e}matiques de Bourgogne (IMB) - UMR 5584, CNRS,}\\
\small{Universit{\'e} de Bourgogne Franche-Comt\'e, F-21000 Dijon, France} \\
\small{Samuel.Herrmann@u-bourgogne.fr}\\[5pt]
\small {$^2$Department of Mathematics 'G. Peano', }\\
\small{University of Torino, Via Carlo Alberto 10,
10123 Turin, Italy,}\\ 
\small{cristina.zucca@unito.it}
}
\maketitle
\begin{abstract}
The simulation of exit times for diffusion processes is a challenging task since it concerns many applications in different fields like mathematical finance, neuroscience, reliability... The usual procedure is to use discretization schemes which unfortunately introduce some error in the target distribution. Our aim is to present a new algorithm which simulates exactly the exit time for one-dimensional diffusions. This acceptance-rejection algorithm requires to simulate exactly the exit time of the Brownian motion on one side and the Brownian position at a given time, constrained not to have exit before, on the other side. Crucial tools in this study are the Girsanov transformation, the convergent series method for the simulation of random variables and the classical rejection sampling. The efficiency of the method is described through theoretical results and numerical examples.
\end{abstract}
\textbf{Key words and phrases:} Exit time, Brownian motion, diffusion processes, Girsanov's transformation, rejection sampling, exact simulation, randomized algorithm, conditioned Brownian motion.\par\medskip

\noindent \textbf{2010 AMS subject classifications:} primary 65C05;
secondary:  65N75, 60G40.\par\medskip
\section*{Introduction}
First exit time distributions for general stochastic processes are of prime importance in many contexts. In mathematical finance, they permit to appreciate the risk of default for a given path-dependent option; in neuroscience, they characterize for instance the interspike time distribution... Since diffusion processes (solutions of stochastic differential equations) form an important family of stochastic processes, we aim to describe quite precisely their exit times. Unfortunately a simple and explicit expression of their distribution is not generally available which leads us to consider numerical approximations. Our task is to point out an algorithm which permits to simulate the exit time of the diffusion. This objective was already concerned by several studies introducing a discretization scheme for the corresponding stochastic differential equation. Most of them are based on improvements of the classical Euler scheme (see for instance \cite{Broadie-Glasserman-Kou-1997}, \cite{Gobet-Menozzi-10}, \cite{Gobet-2000}) which essentially consist in reducing the error stem from the approximation procedure. Let us also note another point of view which consists in approximating the probability density function of the exit time and therefore to approximate the solution of an integral equation \cite{Sacerdote-2014}.

Our approach is completely different since we emphasize an exact simulation procedure: the distribution of the random outcome  of the algorithm is identical to the distribution of the first exit time of the diffusion process. For such simulation methods based on an acceptance-rejection sampling, the challenge is to describe and reduce if possible the time consumption of the simulation. Beskos \& Roberts, in their founding work \cite{beskos2005exact}, already introduced the exact simulation for diffusion paths on some fixed time interval. Meanwhile several modifications of this algorithm have been proposed \cite{Beskos-2006, Beskos-2008, Jenkins}. The basic idea of the rejection sampling is to sequentially observe independent random objects generated according to a proposal distribution until a condition is satisfied. That means that each object is accepted or rejected according to a certain probability weight. 

For diffusion processes on a fixed interval $[0,T]$, the proposed object pointed out by Beskos \& Roberts is the Brownian paths (easy to simulate) and the rejection weight can be computed using the Girsanov transformation which essentially requires the simulation of Brownian bridges. In a previous work \cite{Herrmann-Zucca}, the authors proposed a similar procedure in order to exactly simulate the first passage time of a one-dimensional diffusion through a given threshold: the proposal object is then the Brownian first passage time (inverse gaussian random variable) and the rejection weight requires the simulation of Brownian bridges conditioned to stay under a given threshold (Bessel processes). In order to adapt such a procedure to exit times, it suffices to generate Brownian exit times and to use the rejection weight suggested by the Girsanov transformation.  Unfortunately this random weight requires the simulation of a Brownian bridge conditioned to stay in a given interval, which corresponds to a SDE with singular coefficients: there is no way to exactly simulate such a stochastic paths. Such an intuitive generalization leads therefore to a deadlock.

That's why we aim to present a quite different rejection sampling based on a similar concept (Girsanov's transformation) and avoiding the simulation of the whole conditioned Brownian paths.
Let us consider the stochastic process $(X_t,\ t\ge 0)$, solution of the SDE:
\[
dX_t=\mu(X_t)dt+\sigma(X_t)dB_t,\quad X_0=x\in(a,b),
\]
where $(B_t,\ t\ge 0)$ stands for the standard one-dimensional Brownian motion. We denote by $\tau_{a,b}$ the first time the diffusion exits from the interval $[a,b]$:
\begin{equation}\label{eq:def:tau}
\tau_{a,b}(X):=\inf\{t> 0: \ X_t\notin [a,b]  \}.
\end{equation}
The aim of the study is to propose an efficient algorithm in order to simulate the first exit time $\tau_{a,b}(X)$. Under suitable conditions, the \emph{Lamperti transform} permits to reduce the area of investigation to the constant diffusion case: $\sigma(x)\equiv 1$ providing to change the interval $[a,b]$. That's why we shall only focus our attention on the diffusion process:
\begin{equation}
\label{eq:diff}
dX_t=\mu(X_t)dt+dB_t,\quad X_0=0,
\end{equation}
for $t\le \tau_{a,b}(X)$ with $a<0<b$. The exact algorithm presented here (Section \ref{sec:DFET}) is essentially based on the Girsanov transformation which permits to relate the diffusion \eqref{eq:diff} to a standard one-dimensional Brownian motion. Let us roughly describe the crucial tools needed to execute such an algorithm. The proposal random variable is the Brownian exit time of the interval $[a,b]$, denoted $\mathcal{T}_{\rm prop}$. The weight used in order to accept or reject this proposal depends both on the conditional distribution of the Brownian paths $(B_t)$ given that $\tau_{a,b}(B)=\mathcal{T}_{\rm prop}$ and on a Poisson process defined on the time axis $\mathbb{R}_+$ which is independent of the Brownian motion $(B_t)$ and whose events occur at time $T_1,\ldots,T_n,\ldots$ Then conditionally to $\tau_{a,b}(B)=\mathcal{T}_{\rm prop}$,  only the values of $B_{T_k}$, for any $T_k\le \mathcal{T}_{\rm prop}$, and $B_{\mathcal{T}_{\rm prop}}$ (dots in Figure \ref{fig:explanation}) are involved in the specific conditions associated to the acceptance of $\mathcal{T}_{\rm prop}$.

\begin{figure}[ht]
\centerline{\includegraphics[scale=0.8]{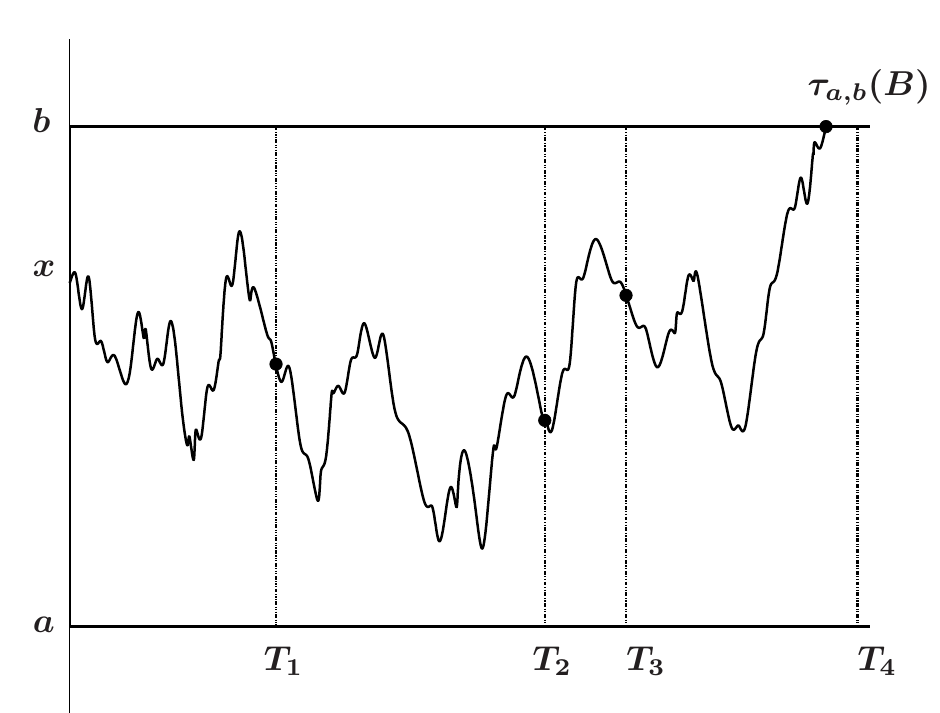}}
\caption{Decomposition of the Brownian paths}
\label{fig:explanation}
\end{figure}

The exact simulation of $\tau_{a,b}(X)$ therefore requires to describe:
\begin{enumerate}
\item The conditional distribution of $B_t$ for a given $t>0$, given that the first exit of the interval $\tau_{a,b}(B)$ is larger than $t$ (Section \ref{sec:BMconstrained}). Using the classical convergent series method, we propose an algorithm denoted by {\scriptsize CONDITIONAL\_DISTR\,(x,[a,b],t)} for the simulation of the constrained Brownian motion $B_t$ with initial condition $x$. 
\item The distribution of the Brownian exit time of the interval $[a,b]$ as the Brownian trajectory starts in $x$. The corresponding algorithm, presented in Section \ref{sec:BFET}, is denoted {\scriptsize BROWNIAN\_EXIT\_ASYMM\, (x,[a,b]) }.
\end{enumerate}
The two main algorithms (DET) and (GDET) presented in Section \ref{sec:DFET} use these two previous algorithms as basic components. They permit the exact simulation of diffusion exit times under weak conditions on the coefficient $\mu$. Efficiency results are pointed out (Theorem \ref{thm:efficiency-exit} and Proposition \ref{prop:efficiency-exit-gen}) and numerical illustrations complete the study  in Section \ref{subsec:DFET-num} 
%
%
%
%
%
%
%
\mathversion{bold}
\section{Brownian motion constrained to stay in an interval $[a,b]$}
\label{sec:BMconstrained}
\mathversion{normal}
Let us first consider the Brownian motion in the space interval $[a,b]$. We need to describe the distribution of $B_t$ for a given $t>0$, given that the first exit of the interval $\tau_{a,b}(B)$ is larger than $t$. We fix the starting position of the Brownian motion $B_0=y\in(a,b)$. Due to both the translation invariance and the scaling property of the Brownian motion, we get 
\begin{align*}
\mathbb{E}[F(B_t,\,t\le \tau_{a,b}(B))| B_0=y]=\mathbb{E}\Big[F\Big(\frac{b-a}{2}\ B_t+\frac{a+b}{2},\,t\le \tau_{-1,1}(B)\Big)\Big| B_0=x\Big],
\end{align*}
where $x=\frac{2y-a-b}{b-a}$. So if we denote $B^x$ the Brownian motion starting in $x$, we deduce easily the following distribution identity
\begin{equation}\label{eq:scaling}
\Big(\tau_{a,b}(B^{y}),B_{\tau_{a,b}(B^{y})}^{y}\Big)\stackrel{(d)}{=}\Big(\frac{(b-a)^2}{4}\,\tau_{-1,1}(B^x),  \frac{b-a}{2} B^x_{\tau_{-1,1}(B^x)}+\frac{a+b}{2}\Big).
\end{equation}
In other words, it suffices to study precisely the first Brownian exit problem from the normalized interval $[-1,1]$ with the initial condition $B_0=x\in[-1,1]$. For notational convenience, we denote by 
\begin{equation}
\label{def:tau-norm}
\tau^x=\inf\{t> 0:\ B_t^x\notin I\}\quad \mbox{with}\ I=[-1,1].
\end{equation}
Let us first emphasize a classical result on the exit position: since $(B_t,\ t\ge 0)$ is a martingale, the identity function $\mathfrak{s}(x)=x$ corresponds to the scale function of the Brownian motion. The optimal stopping theorem permits therefore to describe the probability to exit from the interval on one particular side:
\begin{equation}
\label{eq:exitloc}
\mathbb{P}(B^x_\tau=1)=\frac{\mathfrak{s}(-1)-\mathfrak{s}(x)}{\mathfrak{s}(-1)-\mathfrak{s}(1)}=\frac{x+1}{2},\quad \forall x\in[-1,1]. 
\end{equation}
%
%
%
%
%
%
%
\mathversion{bold}
\subsection{Distribution of $B_t$ given $\tau>t$}
\label{subsec:BMconstrained}
\mathversion{normal}
Let us now focus our attention to the distribution of the constrained Brownian motion. We introduce the killed Brownian motion: as soon as the process hits the boundary of the interval, it jumps to a cemetery point $x^*\notin[-1,1]$. Indeed let us denote by $p(t,x,dy):=\mathbb{P}(B^x_t\in dy,\tau>t)$ the transition probability of the Brownian motion joint with $\tau>t$. It can be written as
$p(t,x,y)=q(t,x,y)\mathbb{P}(\tau^x>t)$ where $q(t,x,y)$ is the transition probability of the Brownian motion conditioned to stay in the interval $[-1,1]$ till time $t$. For any non negative or bounded measurable function $\psi$, we get
\[
\mathbb{E}[\psi(B^x_t),\tau^x>t]=\int_{-1}^1\psi(y)p(t,x,y)\dint y,\quad x\in[-1,1].
\]
It is well known (see, for instance \cite{Ito-McKean} Section 4.11 or \cite{Cox-Miller} Section 5.7) that $(t,y)\mapsto p(t,x,y)$ satisfies an Initial-Boundary value problem associated to the heat equation:
\begin{equation}
\label{eq:IBVP}
\left\{
\displaystyle \begin{array}{l}\frac{\partial p}{\partial t}(t,x,y)=\frac{1}{2}\, \frac{\partial^2 p}{\partial y^2}(t,x,y)\\[5pt]
p(t,x,y)\to\delta_{x}(y)\quad\mbox{as}\ t\to 0,\\[5pt]
p(t,x,y)=0\quad\mbox{for}\ y\in\{-1,1\}, \ t>0.
\end{array}\right.
\end{equation}
Milstein and Tretyakov \cite{Milstein-Tretyakov} recall that the solution of \eqref{eq:IBVP} takes two different expressions. The first one is obtained by the method of images and the second one is based on a spectral decomposition of the heat equation. We introduce the standard gaussian \emph{pdf} and \emph{cdf}: $\phi(x)=\frac{1}{\sqrt{2\pi}}\, e^{-x^2/2}$ and $\Phi(x)=\int_{-\infty}^x \phi(y)\dint y$. Then
\begin{equation}
\label{eq:first kind}
p(t,x,y)=\frac{1}{\sqrt{t}}\sum_{n=-\infty}^{+\infty}\Big\{ \phi((x-y-4n)/\sqrt{t})-\phi((x+y-2-4n)/\sqrt{t}) \Big\}
\end{equation}
which is a convenient formula for small values of $t$. For large times $t$,  we would prefer the second formula:
\begin{equation}
\label{eq:second kind}
p(t,x,y)=\sum_{n\ge 1}\exp\Big(-\frac{n^2\pi^2t}{8}\Big)\sin\Big(\frac{n\pi}{2}\ (x+1)\Big)\sin\Big(\frac{n\pi}{2}\ (y+1)\Big).
\end{equation}
Let us fix $(t,x)$: since we get an explicit expression of $p(t,x,y)$, we can point out exact simulation algorithms based on the acceptance/rejection method even if $p(t,x,y)$ is not a probability density function. Indeed it suffices to divide by $\mathbb{P}(\tau^x>t)$ in order to obtain such a density (cf. Section \ref{subsec:BFETdistribution}). The method applied here is a particular acceptance/rejection method presented in \cite{Devroye-1986} as the \emph{convergent series method}.
%
%
%
%
%
%
%
\subsection{Inequalities related to the series expansion}
\label{subsec:BMconstrained-series}
Let us consider $f$ a non negative function satisfying a series expansion:
\[
f(y)=\sum_{n\ge 0}f_n(y).
\]  
We assume that $I(f):=\int_\mathbb{R}f(y)\,dy<\infty$. The \emph{convergent series method} permits to simulate a random variable with the corresponding probability distribution function $f/I(f)$. It requires two important features: 
\begin{itemize}
\item the existence of both a \emph{pdf} $h$ and a constant $\kappa>0$ such that
\begin{equation}
\label{eq:upper-h}
f(y)\le \kappa h(y),\quad y\in\mathbb{R}.
\end{equation}
\item the existence of a positive sequence $(r_n)_{n\ge 0}$ which converges toward $0$ and satisfies the following reminder upper-bound 
\begin{equation}
\label{eq:reminder}
|R_n(y)|\le r_n,\quad \forall y\in\mathbb{R}\quad\mbox{where}\quad R_n(y):=\sum_{k\ge n+1}f_k(y).
\end{equation}
\end{itemize}

\begin{framed}
\centerline{CONVERGENT SERIES METHOD}

\vspace*{0.5cm}

\noindent
{\bf  Initialization.} \emph{$\mathcal{N}_c=0$.} \\[5pt]
{\bf Step 1.} Generate a random variable $Y$ with density $h$.\\[5pt]
{\bf Step 2.} Generate a random variable $U$ uniformly distributed on $[0,1]$ and define $W=\kappa U h(Y)$.\\[5pt]
{\bf Step 3. Initialization.} \emph{$\mathcal{N}_l=0$, $S=0$, ${\rm Test}=0$.} \\[5pt]
{\bf Step 4.} While $ ({\rm Test}=0)$ do:
\begin{itemize}
\item  $S\leftarrow S+f_{\mathcal{N}_l}(Y) $
\item ${\rm Test}=1_{\{ |S-W|>r_{\mathcal{N}_l} \}}$
\item $\mathcal{N}_l\leftarrow \mathcal{N}_l+1$ and $\mathcal{N}_c\leftarrow \mathcal{N}_c+1$.
\end{itemize}
{\bf Step 5.} If $W\le S$ then $X=Y$ otherwise go to Step 1.\\[5pt]
{\bf Outcome:} the random variable $X$ with density $f/I(f)$ and the global number of terms of the series expansions $\mathcal{N}_c$ used.
\end{framed}

In order to simulate $q(t,x,y)$, i.e. the conditional distribution of $B_t$ given $t<\tau$, we need to choose the density $h$, to point out some constant $\kappa$ and the sequence $(r_n)_{n\ge 0}$ for each explicit expression of $f$ given by \eqref{eq:first kind} and \eqref{eq:second kind}. 
Let us just note that it is challenging to find the smallest constant $\kappa$ as possible since  the number of iterations of Step 1 in order to get an outcome is geometrically distributed with average $\kappa/I(f)$. 

\subsubsection*{Some comments on the series expansion (\ref{eq:first kind})}
We note that (\ref{eq:first kind}) is not an alternating series and
 can be rewritten as
\begin{align*}
p(t,x,y)&=a_0(t,x,y)+\sum_{n=1}^\infty (a_n(t,x,y)+a_{-n}(t,x,y)),
\end{align*}
where $a_n(t,x,y)=\frac{1}{\sqrt{t}}(\phi((x-y-4n)/\sqrt{t})-\phi((x+y-2-4n)/\sqrt{t}))$. 
We first observe that 
\begin{equation}\label{eq:incr_pos}
a_{-n}(t,x,y)<0\quad\mbox{and}\quad  a_{n}(t,x,y)>0,\quad \forall n\ge 1, \ \forall (x,y)\in[-1,1]^2.
\end{equation} 
Moreover both increments of the function $\phi$ are computed on intervals whose length does not depend on the variables $x$ and $n$:
\[
(x+y-2-4n)-(x-y-4n)=(x+y-2+4n)-(x-y+4n)
=2y-2.
\]
For $n$ large enough, the increments are decreasing and therefore $a_n(t,x,y)+a_{-n}(t,x,y)$ becomes negative.  Indeed by considering the four terms:
\[
(x+y-2-4n)<(x-y-4n)<0<(x+y-2+4n)<(x-y+4n),
\]
it is straightforward to see that the smallest one in absolute value is $|x+y-2+4n|$. Since the change of convexity of the curve $x\mapsto\phi(x)$ appears for $x=1$, the sum $a_n(t,x,y)+a_{-n}(t,x,y)$ is negative as soon as 
\(
x+y-2+4n\ge\sqrt{t},
\)
which is satisfied in particular if $n\ge n_0$ with
\begin{equation}
\label{eq:cond_n0}
n_0=\left\lfloor \sqrt{t}/4\right\rfloor+2.
\end{equation}
Since the terms of the series are negative for $n\ge n_0$, the series (\ref{eq:first kind}) is obviously not an alternating series.
\mathversion{bold}
\subsubsection*{Bound of the series reminder in (\ref{eq:first kind})}
\mathversion{normal}
Let us fix $(t,x)\in\mathbb{R}_+^*\times [-1,1]$. We introduce the reminder of the series  \eqref{eq:first kind}:
\[
R_n(y):=\sum_{k\ge n+1}(a_k(t,x,y)+a_{-k}(t,x,y)),\quad n\ge 0.
\]
In order to apply the convergent series method, we need to bound this reminder.
Since $\phi$ is a decreasing function, we obtain the following bound:
\[
\frac{1}{\sqrt{t}}\sum_{k\ge n+1}\phi((\alpha+4k)/\sqrt{t})\le \frac{1}{\sqrt{t}}\int_{n}^\infty\phi((\alpha+4z)/\sqrt{t})\dint z,
\]
as soon as $\alpha+4n\ge 0.$ Moreover, by symmetry,
\[
\frac{1}{\sqrt{t}}\sum_{k\ge n+1}\phi((\alpha-4k)/\sqrt{t})\le \frac{1}{\sqrt{t}}\int_{n}^\infty\phi((\alpha-4z)/\sqrt{t})\dint z,
\]
as soon as $\alpha-4n\le 0.$ If $\alpha=x-y$ or $\alpha=x+y-2$, then the inequalities are satisfied for any $n\ge 1$. 
Finally, for $n\ge 1$, we get
\begin{align*}
|R_n(y)|&\le \frac{1}{\sqrt{t}}  \sum_{k\ge n+1} \phi((x-y-4k)/\sqrt{t})+\phi((x+y-2+4k)/\sqrt{t})\\
&\le \frac{1}{\sqrt{t}}\int_{n}^\infty\phi((x-y-4z)/\sqrt{t})\dint z+ \frac{1}{\sqrt{t}}\int_{n}^\infty\phi((x+y-2+4z)/\sqrt{t})\dint z.
\end{align*}
Let us now observe that we can obtain a bound which is uniform with respect to both the variable $x$ and $y$.
\begin{align*}
|R_n(y)|&\le \frac{1}{8}\Big(1-{\rm erf}\Big(\frac{4n-2}{\sqrt{2t}} \Big)\Big)+\frac{1}{8}\Big(1-{\rm erf}\Big(\frac{4n+4}{\sqrt{2t}} \Big)\Big)\\
&\le r_n(t):=\frac{1}{4}\Big(1-{\rm erf}\Big(\frac{4n-2}{\sqrt{2t}} \Big)\Big).
\end{align*}
Let us note that this upper-bound is efficient for small values of $t$.
\mathversion{bold}
\subsubsection*{Proposal distribution for the rejection method using (\ref{eq:first kind})}
\mathversion{normal}
The aim is to use the acceptance/rejection algorithm in order to simulate a random variable with the target density $q(t,x,y)=p(t,x,y)/\mathbb{P}(\tau^x>t)$ where $p(t,x,y)$ is given by \eqref{eq:first kind}. That's why we are looking for a proposal \emph{pdf} $h$ and a constant $\kappa(t,x)$ independent of $y$ satisfying
\[
p(t,x,y)\le \kappa(t,x)h_{t,x}(y),\quad \forall y\in[-1,1],
\]
as explained in \eqref{eq:upper-h}.
We suggest here the following choice of the proposal distribution:
\[
h_{t,x}(y)=\frac{1}{\sqrt{t}}\ \phi((x-y)/\sqrt{t}),
\]
that means that we choose a gaussian distribution centered in $x$ with variance $t$: it corresponds to the distribution of $B_t$ without any conditioning. The rejection method shall permit to go from this initial distribution to the conditional distribution with respect to the event $\{\tau>t\}$. The algorithm is therefore particularly efficient if the considered event is satisfied with a large probability, this is namely the case for small values of $t$.

Let us now focus our attention to the upper-bound
\begin{align*}
\frac{p(t,x,y)}{h_{t,x}(y)}\le 1+\sum_{n\ge 1}\frac{a_{-n}(t,x,y)}{h_{t,x}(y)}+\frac{a_n(t,x,y)}{h_{t,x}(y)}.
\end{align*}
Since the sum $a_{-n}(t,x,y)+a_{n}(t,x,y)$ is negative as soon as $n\ge n_0$ (see the definition of $n_0$ in \eqref{eq:cond_n0}), we obtain
\begin{align*}
\frac{p(t,x,y)}{h_{t,x}(y)}&\le 1+\sum_{n= 1}^{n_0}\frac{a_{-n}(t,x,y)}{h_{t,x}(y)}+\frac{a_n(t,x,y)}{h_{t,x}(y)}.
\end{align*}
Moreover $a_n(t,x,y)$ is negative, and therefore
\begin{align}
\label{eq:bound-1}
\frac{p(t,x,y)}{h_{t,x}(y)}\le 1+\sum_{n= 1}^{n_0}\frac{\phi((x-y-4n)/\sqrt{t})}{\phi((x-y)/\sqrt{t})}.
\end{align}
In fact all the ratios considered just above have the same nice property: they are smaller than $1$. It suffices to notice that $(x-y)\in]-2,2[$ implies $\phi((x-y)/\sqrt{t})\ge \phi(\alpha/\sqrt{t})$ for any $\alpha\notin]-2,2[$. Hence, for $\alpha=(x-y-4n)$ with $n\ge 1$, the ratios in the r.h.s. of (\ref{eq:bound-1}) are smaller than 1. Using the definition of $n_0$ in \eqref{eq:cond_n0}, \eqref{eq:bound-1} becomes
\begin{equation}\label{eq:k smallt}
\frac{p(t,x,y)}{h_{t,x}(y)}\le 1+n_0=3+\left\lfloor \sqrt{t}/4\right\rfloor=:\kappa(t,x).
\end{equation}
Let us note that this bound is uniform with respect to the variable $x$ (starting value of the Brownian paths).
\subsubsection*{Some comments on the series expansion (\ref{eq:second kind})}
Since $q(t,x,y)$ 
can be characterized by two different series expansions, it is useful to understand which kind of acceptance/rejection algorithm each series produces. Let us now focus on the second series \eqref{eq:second kind}: 
\[
p(t,x,y)=\sum_{n\ge 1}\beta_n(t)a_n(x,y),\quad t>0,\quad (x,y)\in[-1,1]^2,
\]
where $\beta_n(t)=e^{-n^2\pi^2t/8}$ and $a_n(x,y)=
 \sin\Big(\frac{n\pi}{2}\ (x+1)\Big)\sin\Big(\frac{n\pi}{2}\ (y+1)\Big)$. Let us first notice that, for fixed $t>0$, $(\beta_n(t))_{n\ge 1}$ is a decreasing sequence of positive real numbers which converges towards $0$. Since $a_n$ is defined as the product of two sine functions, one idea to bound the reminder of the series is to use the Abel transform and to prove that $\sum_{n=1}^N a_n(x,y)$ is bounded as $N$ tends to infinity. It is quite easy to obtain some bound when $x\neq y$ but for the particular case $x=y$, the sequence of partial sums is increasing (sum of squares) and tends to infinity. In conclusion, we cannot apply technics based on the Abel transform for the series \eqref{eq:second kind}. We shall therefore present an other approach.

\subsubsection*{Bound of the series reminder in (\ref{eq:second kind})}
Let us define the reminder of the series \eqref{eq:second kind}:
\[
\hat{R}_n(y):=\sum_{k\ge n+ 1}\beta_k(t)a_k(x,y),\quad t>0,\quad (x,y)\in[-1,1]^2.
\]
Let us find a bound of this reminder. Since $|a_k(x,y)|\le 1$ for any $k\in\mathbb{N}$ and any $(x,y)\in]-1,1[^2$, we obtain
\[
|\hat{R}_n|\le \sum_{k\ge n+1}\beta_k(t),\quad n\ge 0.
\]
Moreover, $u\mapsto e^{-u^2\pi^2t/8}$ is a decreasing function on $\mathbb{R}_+$, and therefore we get the following bound 
\begin{equation}
\label{eq:reminder-second-kind}
|\hat{R}_n|\le \int_{n}^\infty e^{-u^2\pi^2t/8}\dint u=\sqrt{\frac{2}{\pi t}}\left(1-{\rm erf}\left(\frac{n\pi \sqrt{t}}{2\sqrt{2}}\right)\right):=\hat{r}_n(t),\quad n\ge 0.
\end{equation}
Note that this bound is sharp for large value of $t$.
\subsubsection*{Proposal distribution for the rejection method using (\ref{eq:second kind})}
The aim is to use an acceptance /rejection method in order to simulate a random variable with the target density $p(t,x,y)/\mathbb{P}(\tau>t)$ based on the equation \eqref{eq:second kind}, as already done for \eqref{eq:first kind}. We are looking for a probability distribution function $ h(y)$ and a constant $\kappa(t,x)>0$ independent of $y$ such that:
\[
p(t,x,y)\le \kappa(t,x)h(y),\quad \forall y\in[-1,1],
\]
as explained in \eqref{eq:upper-h}. Our particular choice for the function $h$ is
\[
h(y)=\frac{\pi}{4}\sin\Big( \frac{\pi}{2}(y+1) \Big) ,\quad y\in[-1,1],
\]
which corresponds to the invariant probability measure of the Brownian motion conditioned to stay in the interval $[-1,1]$.  Let us now determine a constant $\kappa(t,x)$. We have
\begin{align*}
p(t,x,y)=\frac{4}{\pi} h(y)\sin\Big(\frac{\pi}{2}\,(x+1)\Big)\sum_{n\ge 1}\beta_n(t)s_n(x)s_n(y),
\end{align*}
where
\begin{equation}\label{eq:def:sn}
s_n(x)=\frac{\sin(\frac{n\pi}{2}(x+1))}{\sin(\frac{\pi}{2}\,(x+1))}.
\end{equation}
Using the formula $a^n-b^n=(a-b)(a^{n-1}+a^{n-2}b+\ldots+b^{n-1})$ applied to the sine function, we observe that $|s_n(x)|\le n$ for all $x\in[-1,1]$.
Hence
\[
p(t,x,y)\le \frac{4}{\pi} h(y) \,\sin\left(\frac{\pi}{2}\,(x+1)\right)\sum_{n\ge 1}n^2\beta_n(t),\quad t>0.
\]
Moreover the function $x\mapsto x^2\, e^{-x^2 \pi^2t/8}$ is decreasing as soon as 
$x\ge \frac{2\sqrt{2}}{\pi\sqrt{t}}$,  we obtain the following upper bound:
\begin{align}
\label{eq:rest}\sum_{n\ge n_0+1}  n^2\beta_n(t)&\le \int_{n_0}^\infty z^2 e^{-z^2\pi^2t/8}\dint z,
\end{align}
where $ n_0=\left\lfloor   \frac{2\sqrt{2}}{\pi\sqrt{t}} \right\rfloor+1$.

\noindent The integration by parts formula and a change of variables lead to
\begin{align*}
\int_{n_0}^\infty z^2 e^{-z^2\pi^2t/8}\dint z &=\frac{4n_0}{\pi^2t}\,e^{-n_0^2\pi^2t/8}+\frac{4}{\pi^2t}\int_{n_0}^\infty e^{-z^2\pi^2t/8}\,\dint z\\
&=\frac{4}{\pi^2t}\,e^{-n_0^2\pi^2t/8}\left(n_0+\int_{0}^\infty e^{-u^2\pi^2t/8-n_0u\pi^2t/4}\,\dint u\right)\\
&\le \frac{4}{\pi^2t}\,e^{-n_0^2\pi^2t/8}\Big(n_0+\sqrt{\frac{2}{\pi t}}\Big)\le C_1(t):=\frac{8n_0}{\pi^2t}\,e^{-n_0^2\pi^2t/8},
\end{align*}
since $\sqrt{\frac{2}{\pi t}}\le \sqrt{\pi}n_0/2\le n_0$.  We just note that 
\[
C_1(t)=\frac{8}{\pi^2 t} \left(\left\lfloor   \frac{2\sqrt{2}}{\pi\sqrt{t}} \right\rfloor+1\right)\, e^{-n_0^2\pi^2t/8}\sim \frac{8}{\pi^2t}\,e^{-\pi^2 t/8},
\]
as $t$ becomes large. Moreover
\begin{equation}\label{eq:rest1}
\sum_{n=1}^{n_0}n^2\beta_n(t)\le  C_2(t):=n_0^3\,\beta_1(t)=\left(\left\lfloor \frac{2\sqrt{2}}{\pi\sqrt{t}}\right\rfloor+1\right)^3e^{-\pi^2 t/8}.
\end{equation}
%
Finally we can choose the constant 
\(
\kappa(t,x):=\frac{4}{\pi}(C_1(t)+C_2(t))\ \sin(\frac{\pi}{2}\,(x+1)).
\)
Let us note that $\kappa(t,x)\sim \frac{4}{\pi}\sin(\frac{\pi}{2}\,(x+1)) e^{-\pi^2t/8}$ as $t$ tends to infinity. Since the averaged number of iterations in the acceptance/rejection method corresponds to $\kappa(t,x)/\mathbb{P}_x(\tau>t)$ and since both $\kappa(t,x)$ and $\mathbb{P}_x(\tau>t)$ are of the same order in the large time limit, the efficiency of the algorithm using \eqref{eq:second kind} still remains strong when the time variable enlarges. 
Indeed let us decompose the following probability:
\begin{align*}
\mathbb{P}_x(\tau> t)&=\sum_{n\ge 0}\mathcal{R}_n(t,x)
\end{align*}
where
\begin{align*}
\mathcal{R}_n(t,x):=\frac{4}{\pi}\frac{1}{2n+1}\exp\Big(-\frac{(2n+1)^2\pi^2}{8}\ t\Big)\sin\Big((2n+1)(x+1)\frac{\pi}{2}\Big).
\end{align*}
Using the definition \eqref{eq:def:sn}, we obtain
\begin{align*}
\frac{\mathbb{P}_x(\tau> t)}{\mathcal{R}_0(t,x)}&=1+\sum_{n\ge 1}\frac{s_{2n+1}(x)}{2n+1}\ \exp\Big(-\frac{n(n+1)}{2}\,\pi^2t\Big).
\end{align*}
The inequality $|s_n(x)|\le n$ for any $x\in[-1,1]$ and $n\ge 1$ leads to
\begin{align*}
\frac{\mathbb{P}_x(\tau> t)}{\mathcal{R}_0(t,x)}&\ge 1-\sum_{n\ge 1}\exp\Big(-\frac{n(n+1)}{2}\,\pi^2t\Big).
\end{align*}
We observe that each term of the series converges in a monotonous way towards $0$ as $t$ tends to $\infty$ which implies that the ratio tends towards $1$ by the Lebesgue theorem. We therefore deduce that for any $\theta>1$, there exists  $t_\theta>0$ such that the average number of iterations in the acceptance/rejection algorithm is smaller than $\theta$ as soon as $t\ge t_\theta$. 
%
%
%
%
%
%
%
\subsection{Algorithm and numerics}
\label{subsec:BMconstrained-num}
Let us now describe the algorithms used in the following in order to simulate $q(t,x,dy)$, the conditional distribution of the Brownian motion at time $t$ given $\tau>t$. We know that $p(t,x,dy)$ admits two different series expansion presented in \eqref{eq:first kind} and \eqref{eq:second kind}. As introduced in Section \ref{subsec:BMconstrained-series}, the classical convergent series method requires both a proposal distribution $h$, satisfying the inequality \eqref{eq:upper-h} that is $p(t,x,y)\le \kappa(t,x)h_{t,x}(y)$ (also necessary for classical acceptance/rejection methods), and a precise description of the series reminder characterized by the sequence $(r_n(t))_n$ and $(\hat{r}_n(t))_n$, see \eqref{eq:reminder}. 

We just recall the results obtained in the previous section:
\begin{table}[ht]
\caption{Series expansion \eqref{eq:first kind}}
\vspace*{0.2cm}
\renewcommand{\arraystretch}{2}
\centerline{\begin{tabular}{|l|p{6.5cm}|}
\hline
Proposal distribution: $h_{t,x}(y)$ & \centerline{$\frac{1}{\sqrt{t}}\ \phi((x-y)/\sqrt{t})$} \\
Constant: $\kappa(t,x)$ &  \centerline{$3+\left\lfloor \sqrt{t}/4\right\rfloor$} \\
Reminder bounds: $r_n(t)$ &  \centerline{$\frac{1}{4}\Big(1-{\rm erf}\Big(\frac{4n-2}{\sqrt{2t}} \Big)\Big)$}  \\
\hline
\end{tabular}}
\label{table1}
\end{table}
\begin{table}[ht]
\caption{Series expansion \eqref{eq:second kind}}
\vspace*{0.2cm}
\centerline{\begin{tabular}{|l|p{9.5cm}|}
\hline
$h_{t,x}(y)$ & \centerline{$\frac{\pi}{4}\sin\Big( \frac{\pi}{2}(y+1) \Big)$} \\
$\kappa(t,x)$ & \hspace*{\fill}$ \frac{4}{\pi}\sin\Big( \frac{\pi}{2}(x+1)\Big) \left\{ \frac{8n_0}{\pi^2t}\,e^{-n_0^2\pi^2t/8} +n_0^3\, e^{-\pi^2 t/8} \right\}$\hspace*{\fill}\\
 & \hspace*{\fill}where $n_0= \left\lfloor \frac{2\sqrt{2}}{\pi\sqrt{t}} \right\rfloor+1$\hspace*{\fill} \\
$\hat{r}_n(t)$ &  \centerline{$\sqrt{\frac{2}{\pi t}}\Big(1-{\rm erf}(\frac{n\pi \sqrt{t}}{2\sqrt{2}})\Big)$}  \\
\hline
\end{tabular}}
\renewcommand{\arraystretch}{1}
\label{table2}
\end{table}
\vspace*{0.2cm}

It is straightforward that the convergent series method using \eqref{eq:first kind} is convenient for small values of $t$ while \eqref{eq:second kind} is rather convenient for large $t$. That's why we choose a threshold $\tcond>0$ (threshold for the conditional distribution) such that \eqref{eq:first kind} is used for $t\le \tcond$ and \eqref{eq:second kind} otherwise. For practical purposes, we fix $\tcond=0.7$, this choice is motivated by the curves in Fig. \ref{fig:curves} and will be held for all numerical illustrations presented in this study. Applying the convergence series algorithm either for small times or large times permits to obtain the simulations presented in Fig. \ref{fig:cond2}. We observe that, even if $x\neq 0$, the condition $t<\tau$ leads to a distribution which looks like symmetric as $t$ becomes large and which converges towards $h_{t,x}(y)$ the invariant measure of the diffusion conditioned to stay in the interval $[-1,1]$. 
\begin{figure}[h]
\centerline{\includegraphics[height=8cm]{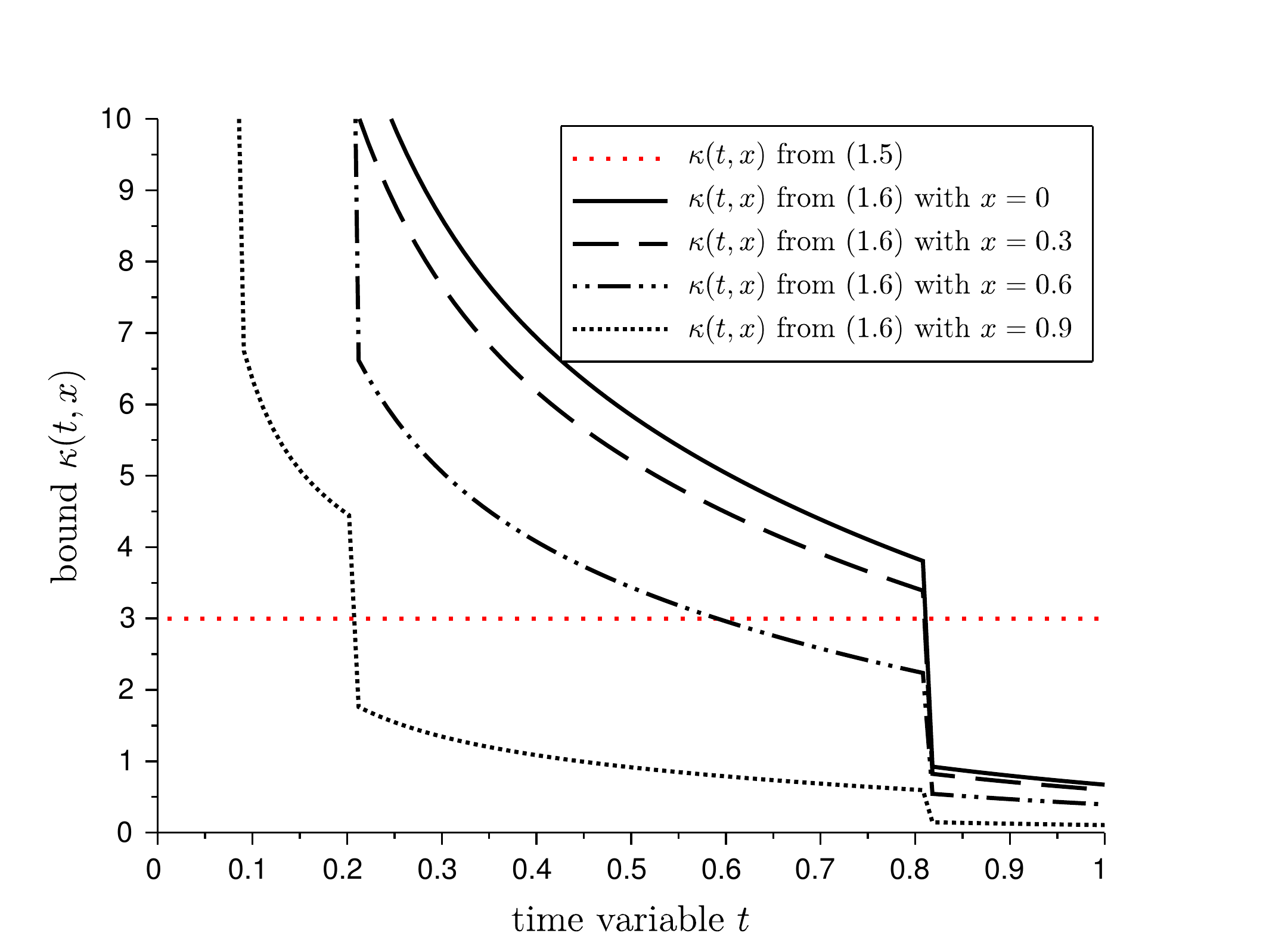}}
\caption{\small Comparison between the constants $\kappa(t,x)$ obtained either from the series expansion \eqref{eq:first kind} or \eqref{eq:second kind} as time elapses ad for different initial values $x\in[0,1)$. }
\label{fig:curves}
\end{figure}
\begin{figure}[h]
\centerline{\includegraphics[height=4cm]{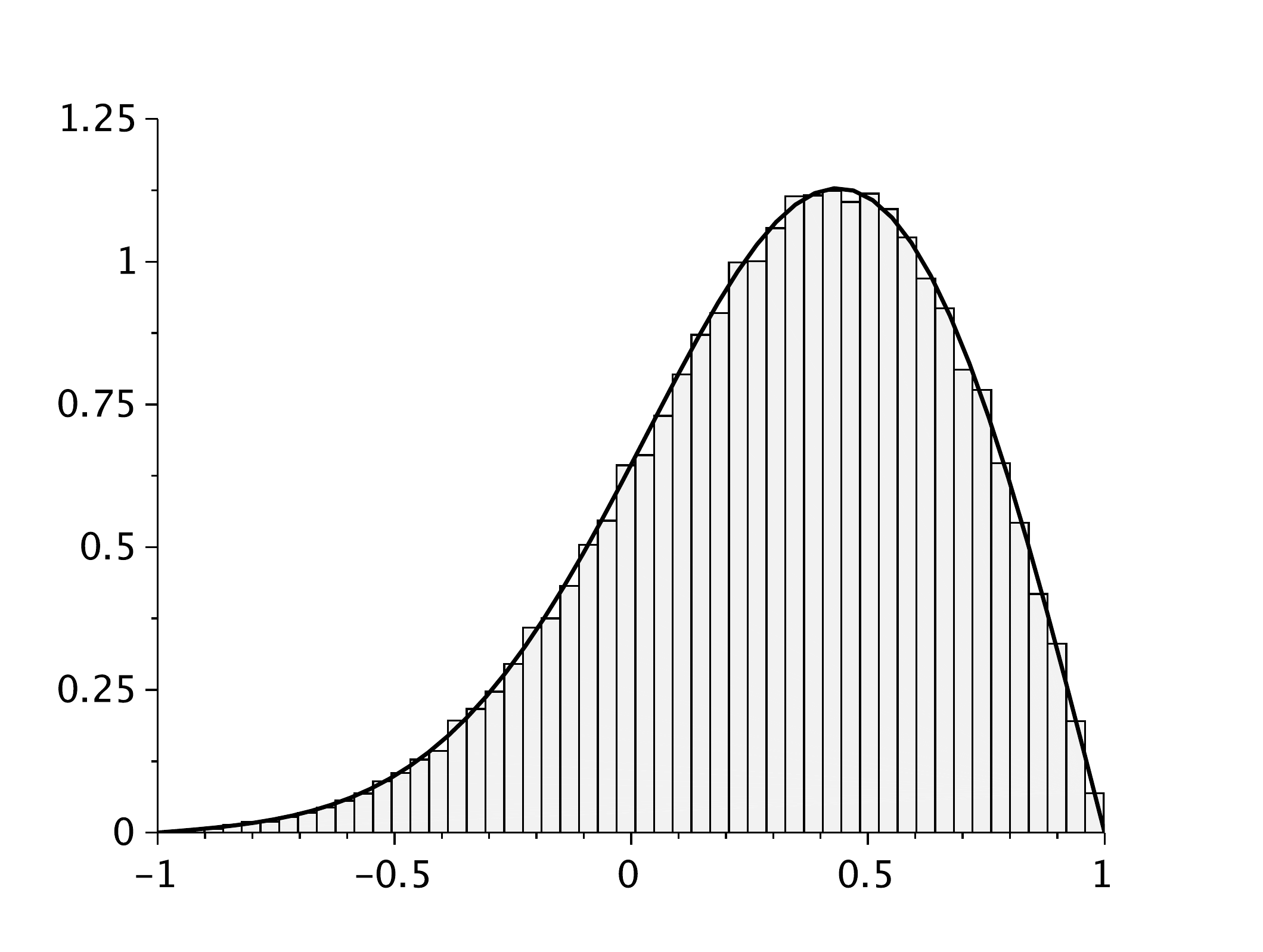}\includegraphics[height=4cm]{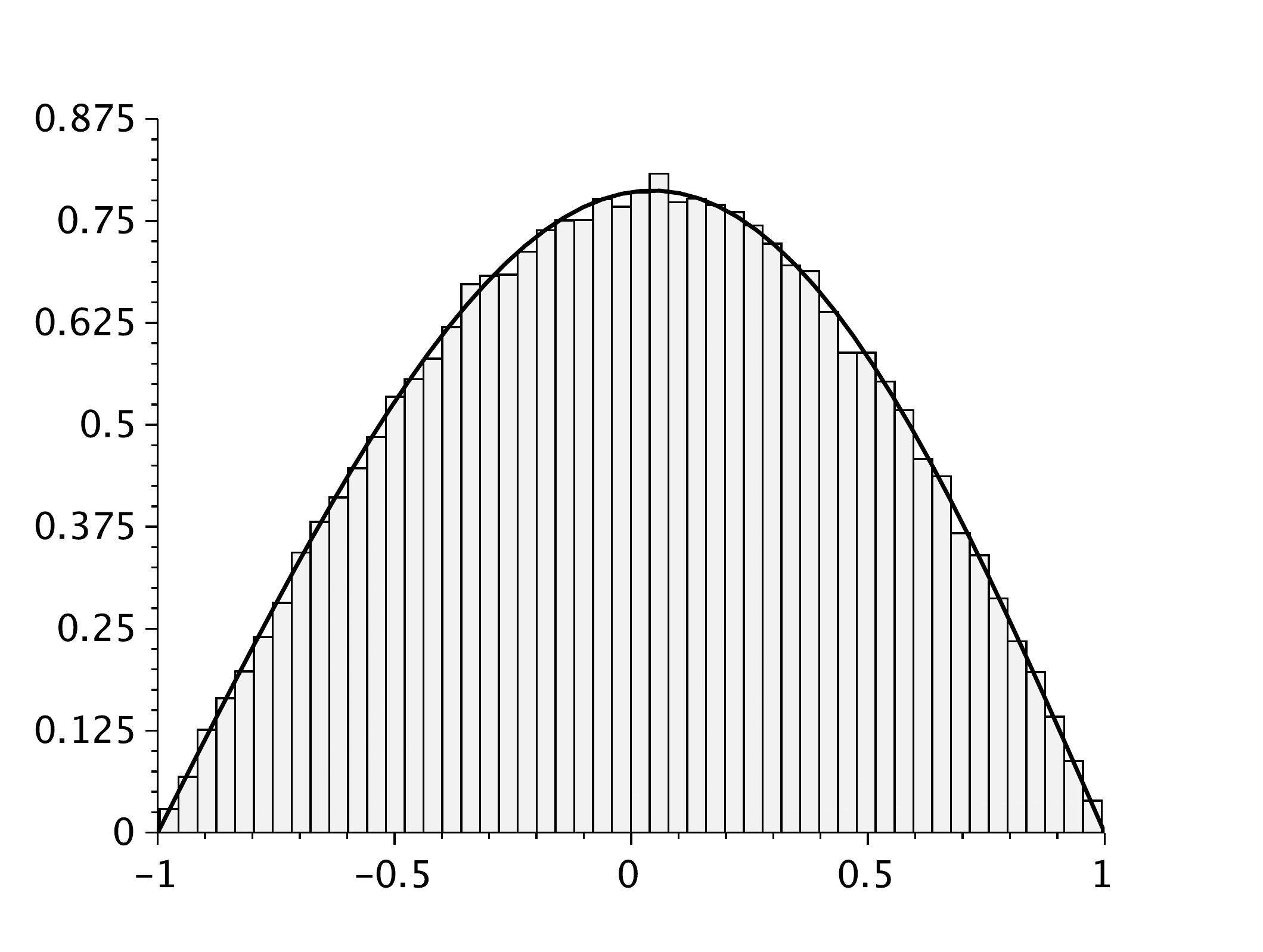}}
\caption{\small Conditional distribution (probability density function and normalized histogram) for $x=0.5$, $t=0.2$ (left) or $t=1$ (right) and $100\,000$ simulations.}
\label{fig:cond2}
\end{figure}
\begin{figure}[h]
\centerline{\includegraphics[height=6cm]{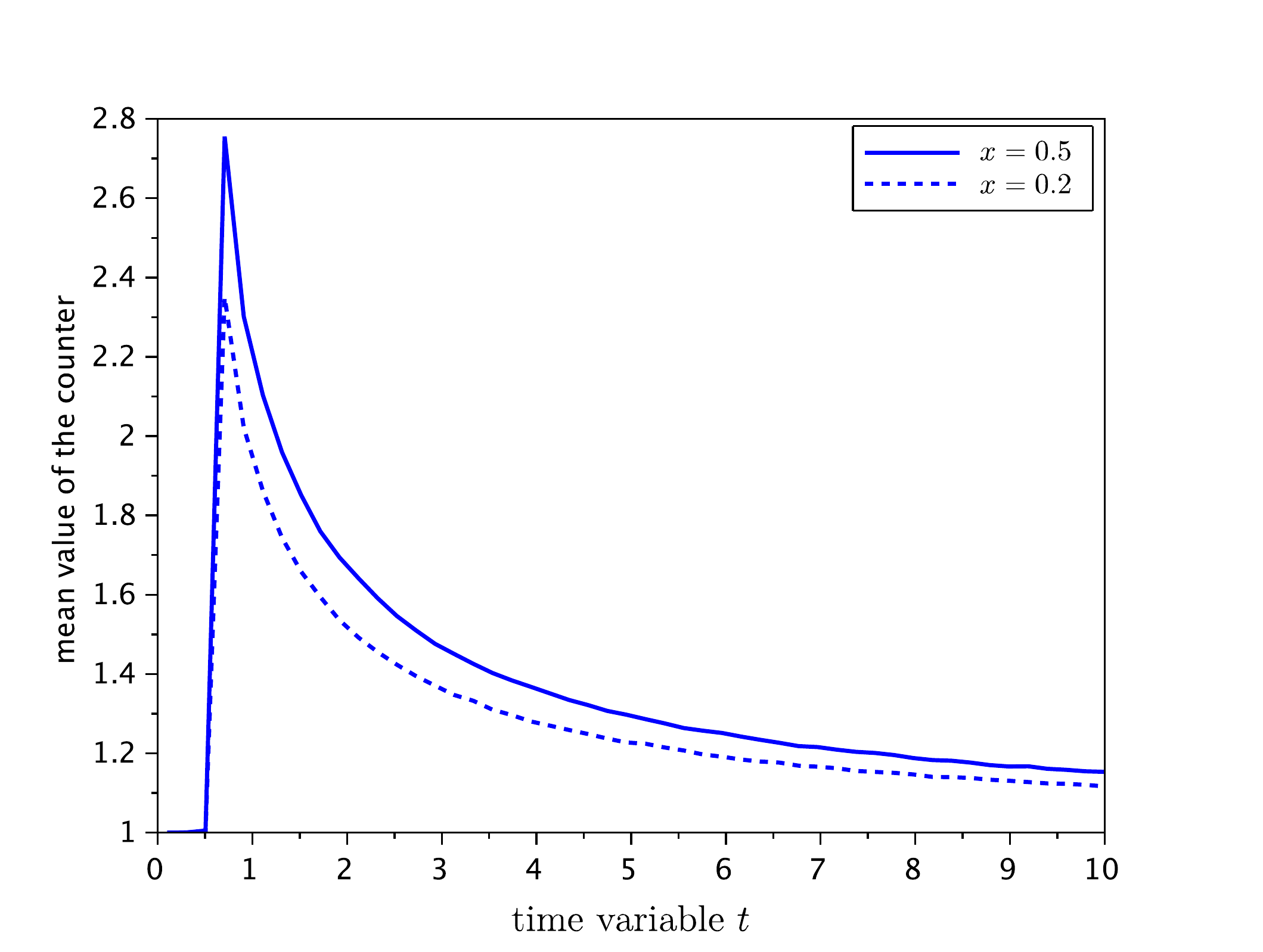}}
\caption{\small Average value of the algorithm counter $\mathcal{N}_c$ versus the time variable,  for the starting position $x=0.5$ (solid line) and $x=0.2$ (dashed line) with $\tcond=0.7$ and $100\,000$ simulations.}
\label{fig:cond2-bis}
\end{figure}
%
%
%
%
%
%
%
\subsection*{Efficiency of the algorithms}
Let us just recall classical results concerning the efficiency of the \emph{convergent series method} introduced in Section \ref{subsec:BMconstrained-series}.  We introduce $\mathcal{N}_c$ the random number of computations of terms $f_n$ used in order to simulate just one random variable $X$. In fact $\mathcal{N}_c$ also corresponds to the number of random variable $S$ generated before the algorithm halts. We also define $\mathcal{N}_l$ the local counter depending on $Y$ which represents the number of terms $f_n$ used till the decision of acceptance or rejection of the variable $Y$ can be taken. Theorem IV.5.2 in Devroye \cite{Devroye-1986} emphasizes the following upper-bound:
\begin{equation}\label{eq:aver-first-1}
\mathbb{E}[\mathcal{N}_l|Y]\leq \frac{2}{\kappa\, h(Y)}\sum_{n=0}^{\infty} R_n(Y)
\end{equation}
where $R_n(y)$ is the reminder of the series expansion defined in the general framework \eqref{eq:reminder}. Let us also notice that the starting idea behind this bound is quite classical: it suffices to use the classical expansion: $\mathbb{E}[\mathcal{N}_l|Y]\le \sum_{n=0}^\infty \mathbb{P}(\mathcal{N}_l>n|Y)$.
 In the study developed in the previous section, we obtained precise bounds for the reminder terms only for $n\ge 1$. So we shall modify the bound isolating the term $n=0$ - instead of \eqref{eq:aver-first-1} - which plays a crucial role for the description of the algorithm efficiency:
\[
\mathbb{E}[\mathcal{N}_l|Y]\leq  1+\frac{2}{\kappa\, h(Y)}\sum_{n=1}^{\infty} R_n(Y).
\]
Since the uniform bound $|R_n(y)|\le r_n$ holds for $n\ge 1$, and since the average number of iterations is $\kappa/I(f)$, Wald's inequality immediately implies that the total number $\mathcal{N}_c$ satisfies for general positive integrable functions $f$:
\begin{align}\label{eq:aver-first-2}
\mathbb{E}[\mathcal{N}_c]\le \frac{\kappa}{I(f)}\int_{-1}^1 \mathbb{E}[\mathcal{N}_l|Y=y]h(y)\,dy\le \frac{1}{I(f)}  \Big(\kappa + 4\sum_{n=1}^\infty r_n\Big).
\end{align}
Let us describe the consequences of this general statement to our particular algorithms.
\mathversion{bold}
\subsection*{For small values of the time variable $t$.}
\mathversion{normal}
For small $t$, it is useful to use the series expansion \eqref{eq:first kind} and the associated reminder bounds presented in Table \ref{table1}.
\begin{prop} \label{prop:average_number_first_kind}
For $t>0$ and $x\in[-1,1]$, we observe the following bound for the number of computations in the convergent series algorithm associated to the expansion \eqref{eq:first kind}:
\begin{equation}
\label{eq:prop:average_number_first_kind}
\mathbb{E}[\mathcal{N}_c]\,\mathbb{P}_x(\tau>t)\le \mathcal{U}_1(t):= 3+\lfloor \sqrt{t}/4\rfloor +\frac{\sqrt{t}}{2\sqrt{2\pi}}\, \vartheta_2(0,e^{-8/t}),
\end{equation}
where $\vartheta_2$ stands for the Jacobi theta function given by
\[
\vartheta_2(z,q):=2q^{1/4}\sum_{n=0}^{\infty}q^{n(n+1)}\cos((2n+1)z),
\]
see \cite{Whittaker} for instance. 
In particular the r.h.s of the previous inequality tends to $3$ as $t$ tends to $0$.
\end{prop}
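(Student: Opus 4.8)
The plan is to specialise the general efficiency estimate \eqref{eq:aver-first-2} to the present setting. Here the non‑negative integrable function is $f(y)=p(t,x,y)$ given by the series \eqref{eq:first kind}, so that $I(f)=\int_{-1}^1 p(t,x,y)\,\dd y=\mathbb{P}_x(\tau>t)$; the proposal constant is the one computed in \eqref{eq:k smallt}, namely $\kappa(t,x)=3+\lfloor\sqrt t/4\rfloor$ (uniform in $x$); and the reminder bounds are the $r_n(t)=\tfrac14(1-{\rm erf}(\tfrac{4n-2}{\sqrt{2t}}))$, $n\ge 1$, recorded in Table \ref{table1}. Multiplying \eqref{eq:aver-first-2} through by $\mathbb{P}_x(\tau>t)$ I will get
\[
\mathbb{E}[\mathcal{N}_c]\,\mathbb{P}_x(\tau>t)\;\le\;\kappa(t,x)+4\sum_{n\ge 1}r_n(t)\;=\;3+\lfloor\sqrt t/4\rfloor+\sum_{n\ge 1}\Big(1-{\rm erf}\Big(\tfrac{4n-2}{\sqrt{2t}}\Big)\Big),
\]
so the whole task reduces to bounding the last series by $\frac{\sqrt t}{2\sqrt{2\pi}}\,\vartheta_2(0,e^{-8/t})$.

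For this I would first reindex by $n=m+1$, noting that $\frac{4n-2}{\sqrt{2t}}=\frac{\sqrt 2\,(2m+1)}{\sqrt t}=:u_m$ with $u_m^2=2(2m+1)^2/t$, so that $\sum_{n\ge1}(1-{\rm erf}(\tfrac{4n-2}{\sqrt{2t}}))=\sum_{m\ge 0}{\rm erfc}(u_m)$. Then I would apply the elementary Gaussian tail bound ${\rm erfc}(u)\le \frac{1}{u\sqrt\pi}\,e^{-u^2}$ for $u>0$ (obtained from $\int_u^\infty e^{-v^2}\dd v\le \int_u^\infty \frac{v}{u}e^{-v^2}\dd v$), which gives ${\rm erfc}(u_m)\le \frac{\sqrt t}{\sqrt{2\pi}\,(2m+1)}\,e^{-2(2m+1)^2/t}\le \frac{\sqrt t}{\sqrt{2\pi}}\,e^{-2(2m+1)^2/t}$ since $2m+1\ge 1$. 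Summing over $m\ge 0$ and recalling from the series definition of $\vartheta_2$ that $n(n+1)+\tfrac14=\tfrac{(2n+1)^2}{4}$, hence $\vartheta_2(0,q)=2\sum_{m\ge0}q^{(2m+1)^2/4}$ and thus $\sum_{m\ge0}e^{-2(2m+1)^2/t}=\tfrac12\vartheta_2(0,e^{-8/t})$, I obtain $4\sum_{n\ge1}r_n(t)\le \frac{\sqrt t}{2\sqrt{2\pi}}\,\vartheta_2(0,e^{-8/t})$; combined with the display above this is exactly \eqref{eq:prop:average_number_first_kind}.

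For the final claim I would let $t\to 0^+$: then $\lfloor\sqrt t/4\rfloor=0$, while $\frac{\sqrt t}{2\sqrt{2\pi}}\vartheta_2(0,e^{-8/t})=\frac{\sqrt t}{\sqrt{2\pi}}\sum_{m\ge0}e^{-2(2m+1)^2/t}$ is dominated, using $(2m+1)^2\ge 2m+1$, by $\frac{\sqrt t}{\sqrt{2\pi}}\cdot\frac{e^{-2/t}}{1-e^{-4/t}}\to 0$, so $\mathcal{U}_1(t)\to 3$. The only place that needs genuine care is matching the reindexed ${\rm erfc}$–sum to the normalisation of the theta series (the factors of $2$, $\sqrt 2$, and the shift $n\mapsto 2n+1$); the Gaussian tail estimate and the limit are routine, and the rest is just unwinding the definitions already in place.
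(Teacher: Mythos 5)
Your proposal is correct and follows essentially the same route as the paper: it specialises the general bound \eqref{eq:aver-first-2} with $\kappa(t,x)=3+\lfloor\sqrt t/4\rfloor$ and the reminder bounds $r_n(t)$, applies the Gaussian tail estimate $1-{\rm erf}(u)\le \frac{1}{u\sqrt\pi}e^{-u^2}$ together with $4n-2\ge 2$, and identifies the resulting sum with $\tfrac12\vartheta_2(0,e^{-8/t})$. The only difference is cosmetic bookkeeping (reindexing via $2m+1$ versus factoring out $e^{-2/t}$ and exponents $n(n+1)$), and your limit argument for $\mathcal{U}_1(t)\to 3$ is fine.
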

\begin{proof} The arguments are based on the upper-bound \eqref{eq:aver-first-2}. Let us recall that  
\[
r_n(t,x)=\frac{1}{4}\left( 1-{\rm erf}\Big( \frac{4n-2}{\sqrt{2t}} \Big) \right), \quad \mbox{for}\quad n\ge 1.
\]
Using the bound 
\begin{equation}\label{eq:bound1}
1-{\rm erf}(x)\leq \frac{1}{x\sqrt{\pi}}\ e^{-x^2},
\end{equation}
we obtain
\begin{align*}
I(f)\mathbb{E}[\mathcal{N}_c]&\leq \kappa+\frac{\sqrt{2t}}{\sqrt{\pi}}\sum_{n=1}^{\infty}\frac{e^{-\frac{\left({4n-2}\right)^2}{2t}}}{4n-2}\le \kappa +\frac{\sqrt{t}}{\sqrt{2\pi}}e^{-2/t}\sum_{n=1}^{\infty} e^{-\frac{\left({4n-2}\right)^2-4}{2t}}\\
&\leq  \kappa +\frac{\sqrt{t}}{\sqrt{2\pi}}e^{-2/t}\sum_{n=0}^{\infty}\Big( e^{-\frac{8}{t}}\Big)^{n(n+1)} \le \kappa + \frac{\sqrt{t}}{2\sqrt{2\pi}}\, \vartheta_2(0,e^{-8/t}).
\end{align*}
In order to conclude, it suffices to replace $\kappa$ by (\ref{eq:k smallt}) and to notice that $I(f)=\mathbb{P}_x(\tau>t)$.
\end{proof}
\mathversion{bold}
\subsection*{For large values of the time variable $t$.}
\mathversion{normal}
For large values of $t$ it is more convenient to use the series expansion \eqref{eq:second kind}. A similar approach to Proposition \ref{prop:average_number_first_kind} easily leads to the upper bound:
\begin{prop}
\label{prop:average_number_second_kind}
Let $t\ge 0$ and $x\in[-1,1]$. The number of computations $\mathcal{N}_c$ for the convergent series algorithm associated to the expansion \eqref{eq:second kind} satisfies
\begin{equation}
\label{eq:prop:average_number_second_kind}
\mathbb{E}[\mathcal{N}_c]\,\mathbb{P}_x(\tau>t)\le  \mathcal{U}_2(t):= \kappa(t,x)+\frac{8}{\pi^2t}\, \Big(\vartheta_3(0,e^{-\pi^2t/8})-1\Big),
\end{equation}
where the constant $\kappa(t,x)$ associated to \eqref{eq:second kind} is described in Section \ref{subsec:BMconstrained-num} and $\vartheta_3$ stands for the Jacobi theta function given by
\[
\vartheta_3(z,q):=1+2\sum_{n=1}^{\infty}q^{n^2}\cos(2nz).
\]
\end{prop}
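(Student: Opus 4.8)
The plan is to imitate the proof of Proposition~\ref{prop:average_number_first_kind} step by step, feeding the generic efficiency estimate \eqref{eq:aver-first-2} with the ingredients attached to the expansion \eqref{eq:second kind}: $I(f)=\mathbb{P}_x(\tau>t)$, the constant $\kappa=\kappa(t,x)$ recalled in Section~\ref{subsec:BMconstrained-num}, and the reminder bound $r_n=\hat{r}_n(t)=\sqrt{2/(\pi t)}\,\bigl(1-{\rm erf}(n\pi\sqrt{t}/(2\sqrt{2}))\bigr)$ obtained in \eqref{eq:reminder-second-kind}. That estimate was shown to be uniform in $(x,y)$ for every $n\ge1$, which is exactly the hypothesis required by the form of \eqref{eq:aver-first-2} that isolates the term $n=0$, so one gets at once
\[
\mathbb{E}[\mathcal{N}_c]\,\mathbb{P}_x(\tau>t)\ \le\ \kappa(t,x)+4\sum_{n\ge1}\hat{r}_n(t).
\]

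The only computation left is to bound this tail series. First I would apply the elementary Gaussian estimate \eqref{eq:bound1}, $1-{\rm erf}(u)\le e^{-u^2}/(u\sqrt{\pi})$, with $u=n\pi\sqrt{t}/(2\sqrt{2})$; after collecting the multiplicative constants $\sqrt{2/(\pi t)}$ and $2\sqrt{2}/(n\pi\sqrt{t}\sqrt{\pi})$ this gives $\hat{r}_n(t)\le \tfrac{4}{\pi^2 t}\,\tfrac{1}{n}\,e^{-n^2\pi^2 t/8}$. Using then the crude inequality $1/n\le1$ valid for $n\ge1$, together with the definition $\vartheta_3(0,q)-1=2\sum_{n\ge1}q^{n^2}$, one obtains
\[
4\sum_{n\ge1}\hat{r}_n(t)\ \le\ \frac{16}{\pi^2 t}\sum_{n\ge1}e^{-n^2\pi^2 t/8}\ =\ \frac{8}{\pi^2 t}\bigl(\vartheta_3(0,e^{-\pi^2 t/8})-1\bigr),
\]
and combining the two displays yields exactly \eqref{eq:prop:average_number_second_kind}.

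There is no genuine obstacle here — the argument is a short chain of inequalities — and the two points deserving a word of care are (i) verifying that \eqref{eq:reminder-second-kind} really furnishes the uniform bound $r_n$ demanded by \eqref{eq:aver-first-2}, and (ii) the bookkeeping of the constants when passing from ${\rm erf}$ to the exponential. One could instead keep the factor $1/n$ and express the tail through a polylogarithm, but dropping it is harmless and produces the cleaner theta-function form; note that the bound degenerates as $t\to0$, consistent with \eqref{eq:second kind} being the large-$t$ expansion, whereas as $t\to\infty$ one checks $\mathcal{U}_2(t)\to0$ at the rate $e^{-\pi^2 t/8}$, so that $\mathcal{U}_2(t)/\mathbb{P}_x(\tau>t)$ stays bounded — which is precisely the efficiency statement announced in Section~\ref{subsec:BMconstrained-num}.
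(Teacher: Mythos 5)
Your proof is correct and follows exactly the route the paper intends: it plugs $\kappa(t,x)$, $I(f)=\mathbb{P}_x(\tau>t)$ and the reminder bound \eqref{eq:reminder-second-kind} into the generic estimate \eqref{eq:aver-first-2}, then controls the tail via \eqref{eq:bound1} and the identity $\vartheta_3(0,q)-1=2\sum_{n\ge1}q^{n^2}$, precisely mirroring the proof of Proposition \ref{prop:average_number_first_kind} as the paper prescribes (it leaves these details to the reader). Your constant bookkeeping, giving $\hat r_n(t)\le \frac{4}{\pi^2 t n}e^{-n^2\pi^2 t/8}$ and hence the factor $\frac{8}{\pi^2 t}$, checks out.
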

Since the proofs of Proposition \ref{prop:average_number_second_kind} and Proposition \ref{prop:average_number_first_kind} are similar, we let the details of the proof to the reader.
%
%
%
%
%
%
%
\section{Distribution of the Brownian first exit time}
\label{sec:BFET}
In this section, we focus our attention to the distribution of the first Brownian exit $(\tau,B^x_{\tau})$, where $B^x$ stands for the one-dimensional Brownian motion starting in $x$ and $\tau$ the exit time of the interval $[-1,1]$ as defined in \eqref{def:tau-norm}.
%
%
%
%
%
%
%
\mathversion{bold}
\subsection{Distribution of the first exit time $\tau$}
\label{subsec:BFETdistribution}
\mathversion{normal}
 Since $p(t,x,y)$ satisfies two different series expansions \eqref{eq:first kind} and \eqref{eq:second kind}, we can also obtain two series for the cumulative distribution of the exit time $\tau$: 
\begin{align*}
\mathbb{P}_x(\tau\le t)=1-\mathbb{P}_x(\tau>t)=1-\int_{-1}^1p(t,x,y)\dint y.
\end{align*}
Therefore \eqref{eq:first kind} becomes
\begin{align*}
\mathbb{P}_x(\tau\le t)&=1-\sum_{n=-\infty}^{+\infty}\Big\{ -\Phi((x-1-4n)/\sqrt{t})+\Phi((x+1-4n)/\sqrt{t})\\
&-\Phi((x+1-2-4n)/\sqrt{t})+\Phi((x-1-2-4n)/\sqrt{t}) \Big\}\\
&=1-2\sum_{n=-\infty}^{+\infty}\Big\{ -\Phi((x-1-4n)/\sqrt{t})+\Phi((x+1-4n)/\sqrt{t})\Big\}\\
&=1+2\sum_{n=-\infty}^{+\infty}(-1)^n\Phi((x-(2n+1))/\sqrt{t}).
\end{align*}
We deduce easily the expression of the \emph{pdf}:
\begin{equation}
\label{eq:pdf_tau}
p_\tau(t)=\sum_{n=-\infty}^{+\infty}(-1)^{n+1}\frac{ (x-(2n+1))}{t^{3/2}}\ \ \phi((x-(2n+1))/\sqrt{t}).
\end{equation}
Of course the particular case $x=0$ ensures simplifications:  the symmetry of the function $\phi$ implies $\phi(-(2n+1)/\sqrt{t})=\phi(-(2\times(-n-1)+1)/\sqrt{t})$ and therefore
\begin{align}
\label{eq:pdf_tau_1}
p_\tau(t)&=\sum_{n=0}^{+\infty}(-1)^{n}R_1(2n+1,t)\quad \mbox{with}\ R_1(n,t):=\frac{ 2n}{t^{3/2}}\ \ \phi\Big(\frac{n}{\sqrt{t}}\Big).
\end{align}
Such an expression for the \emph{pdf} of $\tau$ is of prime interest for simulation purposes. Let us note that the reminder of the series is small for small values of $t$. Indeed the sequence $(R_1(2n+1,t))_{n\ge 1}$ is a decreasing sequence under the assumption $t\le 9$. For large values $t$, it is more convenient to consider \eqref{eq:second kind} which leads to
\begin{align*}
\mathbb{P}_x(\tau> t)&=\frac{4}{\pi}\sum_{n\ge 0}\frac{1}{2n+1}\exp\Big(-\frac{(2n+1)^2\pi^2}{8}\ t\Big)\sin\Big((2n+1)(x+1)\pi/2\Big), 
\end{align*}
(see also (3.3) in Milstein and Tretyakov \cite{Milstein-Tretyakov}). We deduce the expression
\begin{equation}\label{eq:pdf_gen}
p_\tau(t)=\frac{\pi}{2} \sum_{n=0}^{+\infty}(2n+1)\exp\Big(-\frac{(2n+1)^2\pi^2}{8}\ t\Big)\sin\Big((2n+1)(x+1)\pi/2\Big).
\end{equation}
Here also the case $x=0$ plays a crucial role due to its simple expression:
\begin{equation}\label{eq:pdf_tau_2}
p_\tau(t)=\sum_{n=0}^{+\infty}(-1)^{n}R_2(2n+1,t)\quad \mbox{with}\ R_2(n,t):=\frac{\pi n}{2}\exp\Big(-\frac{n^2\pi^2}{8}\ t\Big).
\end{equation}
This expression is of prime interest as soon as  the variable $t$ is large. Indeed the series becomes alternating with a decreasing sequence $(R_2(2n+1))_{n\ge 1}$ as soon as $t\ge 4/(9\pi^2)$. 
%
%
%
%
%
%
%
\subsection{Algorithms and numerics}
\label{subsec:BFET-num}

The aim is to describe an algorithm which permits to simulate both the Brownian exit time $\tau$ and the exit position $B^x_\tau$ of the interval $[-1,1]$. In the previous section, we have pointed out two different series \eqref{eq:pdf_tau} and \eqref{eq:pdf_gen} corresponding to the  \emph{pdf} of $\tau$. We could therefore apply the classical convergent series method for simulation purposes. But here, we prefer to introduce another method based on an iterative procedure, the advantage of our approach is to deal with alternative series rather than general convergent series and therefore the reminder of the series is easier to bound. 

First of all, we shall focus our attention to the case $x=0$. In this case the interval is symmetric and the expression of the \emph{pdf} of $\tau$ is simplified, see \eqref{eq:pdf_tau_1} and \eqref{eq:pdf_tau_2}. Moreover these series are alternating for suitable conditions on the time variable $t$. So we can apply the \emph{alternating series method} for the simulation of the exit time $\tau$, the exit location being just uniformly distributed in $\{-1,1\}$. The alternating series method is an acceptance/rejection method. We need therefore a proposal distribution and an acceptance procedure.
For the proposal distribution, let us first fix $\texit>0$ (threshold for the exit distribution). Let us consider a standard gaussian random variable $G$. We define a new variable $Y$ as follows: if $1/G^2\le \texit$ then $Y=1/G^2$ otherwise $Y=\texit-\frac{8}{\pi^2}\log(U)$ where $U$ is uniformly distributed on $[0,1]$ and independent of $G$. The density function of $Y$ which corresponds to the proposal distribution of the algorithm satisfies:
\begin{equation}\label{eq:def:hhat}
\hat{h}(t)=\frac{1}{t^{3/2}}\,\phi\Big(\frac{1}{\sqrt{t}}\Big)1_{\{t\le \texit\}}+\frac{\pi}{4\kappa}e^{-\frac{\pi^2}{8}\,t}1_{\{t>\texit\}},
\end{equation}
where $\kappa^{-1}=\pi\ {\rm erf}(\sqrt{1/(2\texit)})\ e^{\pi^2\texit/8}/2$. Using \eqref{eq:pdf_tau_1} and \eqref{eq:pdf_tau_2}, we deduce the following link between the proposal distribution and the target distribution:
\begin{align}
\label{eq:double}
p_\tau(t)=\left\{
\begin{array}{ll}
2\,\hat{h}(t)\Big(1-\frac{R_1(3,t)}{R_1(1,t)}+\frac{R_1(5,t)}{R_1(1,t)}-\ldots  \Big) & \mbox{for}\ t\le  \texit,\\
2\kappa \,\hat{h}(t)\Big( 1-\frac{R_2(3,t)}{R_2(1,t)}+\frac{R_2(5,t)}{R_2(1,t)}-\ldots \Big) & \mbox{for}\ t >  \texit.
\end{array}
\right.
\end{align}
Let us present now the acceptance/rejection method applied to this particular situation which permits to simulate the exit time $\tau$ in the symmetric case.
\begin{framed}
\centerline{BROWNIAN EXIT TIME FOR SYMM. INTERVALS (with parameter $\texit$)}

\vspace*{0.2cm}
\centerline{
\framebox[1.1\width]{\scriptsize BROWNIAN\_EXIT\_SYMMETRIC}}
\vspace*{0.5cm}

\noindent
{\bf First initialization:} $\mathcal{N}_s=0$.\\[5pt]
{\bf Step 0: Second initialization.} \emph{$n=0$, ${\rm Test}=0$, $L_0=0$, $U_0=1$. \\[5pt]
{\bf Step 1.} Generate a random variable $Y$ with \emph{pdf} $\hat{h}$ given by \eqref{eq:def:hhat}. If $Y\le \texit$ then set $i=1$ and $C=1$ else set $i=2$ and $C=\kappa$.\\[5pt]
{\bf Step 2.} Generate a random variable $V$ uniformly distributed on $[0,1]$.\\[5pt]
{\bf Step 3.} While $(V<C U_n)\ \&\ ({\rm Test}=0)$ do:
\begin{itemize}
\item $n\leftarrow n+1$ and $\mathcal{N}_s\leftarrow \mathcal{N}_s+1$,
\item  $L_n=U_{n-1}-\frac{R_i(4n-1,Y)}{R_i(1,Y)}$, $U_n=L_n+\frac{R_i(4n+1,Y)}{R_i(1,Y)}$
\item ${\rm Test}=1_{\{ V\le C L_n \}}$
\end{itemize}
{\bf Step 4.} If ${\rm Test}=1$ then $Z=Y$ otherwise go to Step 0.\\[5pt]
{\bf Outcome:} the random variable $Z$ with density $p_\tau$ and the number of incrementations needed $\mathcal{N}_s$.
}

\end{framed}
The outcome variable distribution $p_\tau$ corresponds to the target one as soon as the sequences $(R_i(2n+1,t)/R_i(1,t))_{n\ge 1}$ appearing in  
\eqref{eq:double} are decreasing. This is an easy adaptation of the classical alternating series method (proof left to the reader). Such a property leads to the condition:
\begin{equation}\label{eq:interval}
\frac{4}{9\pi^2}\le \texit\le 1.
\end{equation}
Let us just note that the probability of acceptance $A=\{{\rm Accept}\ Y\}$ in the acceptance/rejection algorithm satisfies:
\begin{align}\label{predev}
	\mathbb{P}(A)&=\mathbb{P}(A,Y\le \texit )+\mathbb{P}(A,Y> \texit )\nonumber\\
	&=\mathbb{P}\Big(V\le C\frac{p_\tau(Y)}{2C\hat{h}(Y)},Y\le \texit \Big)+\mathbb{P}\Big(V\le C\frac{p_\tau(Y)}{2C\hat{h}(Y)},Y >\texit \Big)=\frac{1}{2}.
\end{align}
We deduce that the number of random variables $Y$ simulated in order to obtain $Z$ is geometrically distributed with average $2$ and does not depend on the choice of $\texit $. Nevertheless the parameter $\texit $ has an influence on the efficiency of the algorithm as illustrated in Fig. \ref{fig:time-bis}.
 This figure represents the averaged value of the number of iterations of Step 3 needed by the algorithm in order to simulate one r.v. with the \emph{p.d.f.} $p_\tau$. This figure suggest to choose a parameter $\texit $ of the order of $1/2$. For this parameter, we obtain:
\[
\kappa^{-1}\approx 2.4529458
\]
\begin{figure}
\centerline{\includegraphics[height=6cm]{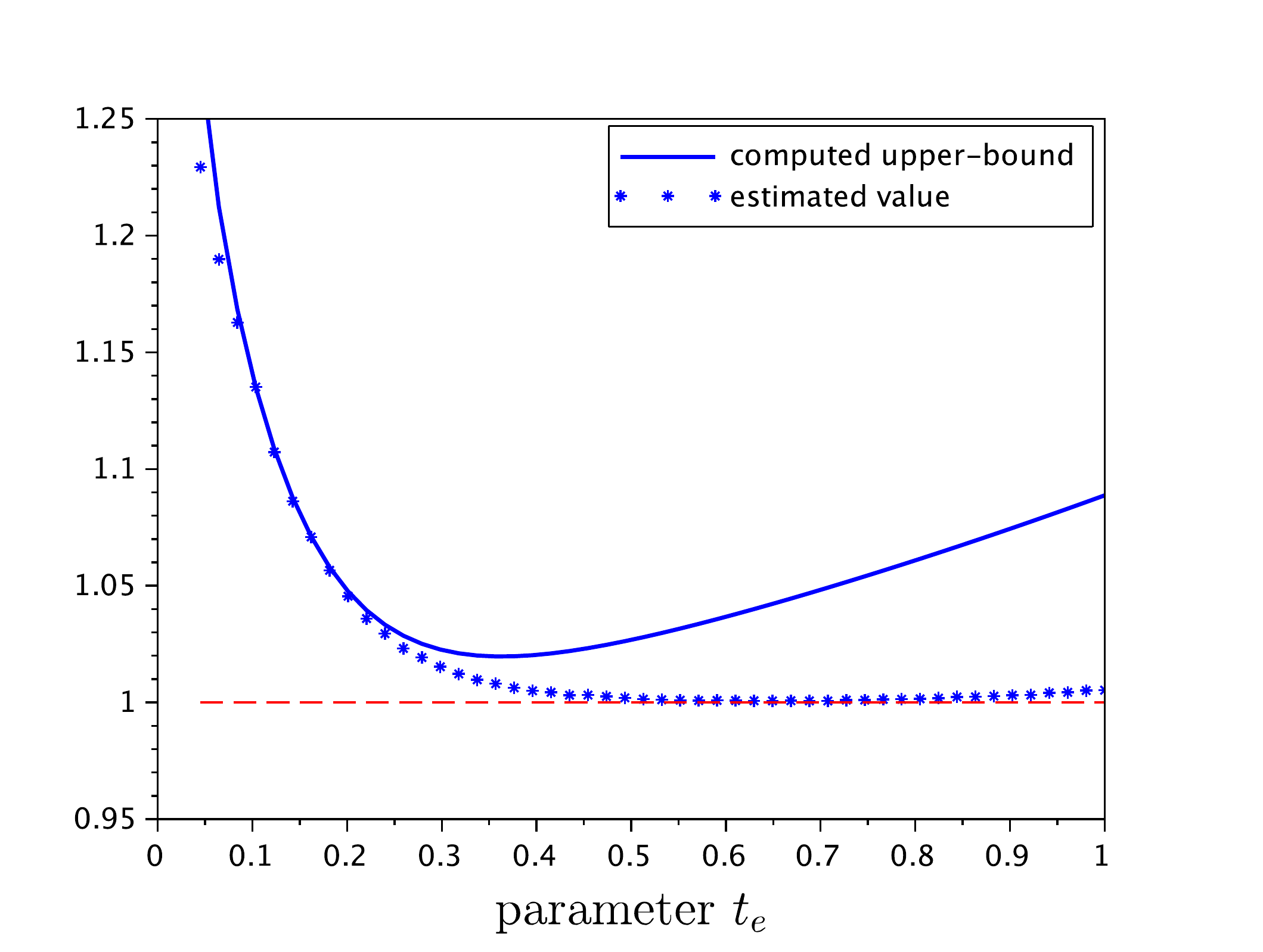}
}
\caption{\small The estimated average value of the counter $\mathcal{N}_s$ versus the barrier $\texit $ with $100\,000$ simulations for the estimation (stars) and the upper-bound function given by \eqref{eq:prop:efficient_sym} (solid line).}
\label{fig:time-bis}
\end{figure}
\begin{prop}
\label{prop:efficient_sym}
Let us note that $\mathcal{N}_s$ is the random number of iterations of Step 3 used in the previous algorithm called {\scriptsize BROWNIAN\_EXIT\_SYMMETRIC} in order to simulate just one random variable $Z$ with density $p_\tau$. Then
\begin{equation}\label{eq:prop:efficient_sym}
\mathbb{E}[\mathcal{N}_s]\le \sqrt{\frac{\texit }{2\pi}}\, e^{-\frac{1}{2\texit }}+\frac{3}{2}\,{\rm erfc}\Big(\frac{1}{\sqrt{2\texit }}\Big)+\frac{4}{\pi}\, e^{-\frac{\pi^2\texit }{8}}+\frac{4}{5\pi}\, \frac{e^{-25\frac{\pi^2\texit }{8}}}{1-e^{-5\pi^2 \texit }},
\end{equation}
where $x\mapsto{\rm erfc}(x)$ is the complementary error function and $\texit $ is the parameter appearing in the algorithm and satisfying  \eqref{eq:interval}. 
\end{prop}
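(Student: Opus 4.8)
The plan is to bound $\mathbb{E}[\mathcal{N}_s]$ by decomposing according to the two regimes $Y\le\texit$ and $Y>\texit$, conditioning on the proposal $Y$, and applying the standard tail-sum identity $\mathbb{E}[\mathcal{N}_s\mid Y]\le\sum_{n\ge 0}\mathbb{P}(\mathcal{N}_s>n\mid Y)$. In Step~3 of the algorithm the loop performs another incrementation only while $\mathrm{Test}=0$, i.e.\ while $V>CL_n$; since $(L_n,U_n)$ are the successive partial sums of the alternating series in \eqref{eq:double}, on $\{V\le C p_\tau(Y)/(2C\hat h(Y))\}$ the squeezing $L_n\uparrow$, $U_n\downarrow$ eventually decides the outcome, and the number of steps before a decision is governed by how fast the ratio $R_i(4n\pm1,Y)/R_i(1,Y)$ decays. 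Concretely I would use $\mathbb{P}(\mathcal{N}_s>n\mid Y)\le$ (the $n$-th term of the alternating series)$/$(first term), so that after integrating against $\hat h$ the contribution of the event $\{Y\le\texit\}$ is controlled by $\int_0^{\texit}\sum_{n\ge 1}\tfrac{R_1(4n-1,t)}{R_1(1,t)}\,\hat h(t)\,dt$ and similarly with $R_2$ and the factor $\kappa$ on $\{Y>\texit\}$; the $\kappa$ and $1/\kappa$ cancel because $\hat h(t)=\tfrac{\pi}{4\kappa}e^{-\pi^2 t/8}$ there.

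Next I would carry out the two integrals explicitly. For the small-time part, $\hat h(t)=t^{-3/2}\phi(1/\sqrt t)\,\mathbf{1}_{\{t\le\texit\}}=R_1(1,t)/2$, so $\int_0^{\texit}\tfrac{R_1(4n-1,t)}{R_1(1,t)}\hat h(t)\,dt=\tfrac12\int_0^{\texit}R_1(4n-1,t)\,dt$, and a direct computation gives $\int_0^{\texit}\tfrac{m}{t^{3/2}}\phi(m/\sqrt t)\,dt=\mathrm{erfc}\big(m/\sqrt{2\texit}\big)$. Summing the geometric-type series over $n\ge 1$ of $\mathrm{erfc}((4n-1)/\sqrt{2\texit})$ and crudely bounding all but the first term (using $\mathrm{erfc}$ monotone and, if needed, the Gaussian tail bound \eqref{eq:bound1}) should produce the terms $\sqrt{\texit/(2\pi)}\,e^{-1/(2\texit)}$ and $\tfrac32\mathrm{erfc}(1/\sqrt{2\texit})$; here the $n=1$ term $\tfrac12\mathrm{erfc}(3/\sqrt{2\texit})$ plus the $n=0$ bound $\mathbb{P}(\mathcal N_s>0\mid Y)\le 1$ over $\{Y\le\texit\}$, integrated, account for the $\tfrac32$ coefficient and the extra $\sqrt{\texit/(2\pi)}e^{-1/(2\texit)}$ piece comes from bounding $\sum_{n\ge 2}$. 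For the large-time part, $\hat h(t)\,\mathbf{1}_{\{t>\texit\}}=\tfrac{\pi}{4\kappa}e^{-\pi^2 t/8}$, and $\tfrac{R_2(4n-1,t)}{R_2(1,t)}=(4n-1)e^{-((4n-1)^2-1)\pi^2 t/8}$, so $\kappa\int_{\texit}^\infty\tfrac{R_2(4n-1,t)}{R_2(1,t)}\hat h(t)\,dt=\tfrac{\pi}{4}\int_{\texit}^\infty(4n-1)e^{-(4n-1)^2\pi^2 t/8}\,dt=\tfrac{2}{(4n-1)\pi}e^{-(4n-1)^2\pi^2\texit/8}$; summing $n\ge 1$ and bounding the $n\ge 2$ tail by a geometric series with ratio $e^{-5\pi^2\texit}$ yields precisely $\tfrac{4}{\pi}e^{-\pi^2\texit/8}+\tfrac{4}{5\pi}\tfrac{e^{-25\pi^2\texit/8}}{1-e^{-5\pi^2\texit}}$ (the $n=1$ term giving the first, $n\ge 2$ the second). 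Adding the four contributions gives the stated bound.

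The main obstacle I anticipate is not any single integral — each is elementary — but rather the careful bookkeeping needed to justify $\mathbb{P}(\mathcal{N}_s>n\mid Y)\le R_i(4n-1,Y)/R_i(1,Y)$ uniformly, which requires that the relevant ratios be decreasing (so the alternating-series squeeze is valid), i.e.\ exactly condition \eqref{eq:interval} $\tfrac{4}{9\pi^2}\le\texit\le 1$; and the combinatorial care in matching which index $n$ in Step~3 produces which term $R_i(4n\pm1,\cdot)$, so that the exponents $(4n-1)^2$ line up correctly. I would also need to check that Fubini applies to interchange $\sum_n$ and $\int$, which is immediate by nonnegativity. The cancellation of $\kappa$ against $1/\kappa$ in the $Y>\texit$ branch must be stated cleanly so that the final bound is genuinely $\texit$-explicit and $\kappa$-free; once that is observed, the remaining work is the routine Gaussian and geometric estimates sketched above.
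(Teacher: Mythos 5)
Your overall strategy is the same as the paper's (tail-sum for the counter, integration against $\hat h$, exploiting that $\hat h$ is proportional to $R_i(1,\cdot)$ on each regime so that $\kappa$ cancels, then elementary Gaussian/exponential integrals), but there is a genuine gap in the accounting that makes the announced constants unreachable as written. The quantity $\mathcal{N}_s$ is cumulative over all restarts of the algorithm: after a rejection the algorithm goes back to Step 0, draws a new $Y$, and keeps incrementing $\mathcal{N}_s$. Your bound $\mathbb{E}[\mathcal{N}_s\mid Y]\le\sum_{n\ge0}\mathbb{P}(\mathcal{N}_s>n\mid Y)$ with $\mathbb{P}(\mathcal{N}_s>n\mid Y)$ controlled by the bracket width is only valid for the \emph{per-loop} counter (what the paper calls $\mathcal{N}_s^{\rm loop}$); conditioning ``on the proposal $Y$'' does not even identify a single proposal for the total count. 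The paper closes this by showing the acceptance probability equals $1/2$ (equation \eqref{predev}) and invoking Wald's identity, $\mathbb{E}[\mathcal{N}_s]=2\,\mathbb{E}[\mathcal{N}_s^{\rm loop}]$. This factor $2$ is not cosmetic: your per-proposal computation of the large-time branch gives $\tfrac{2}{m\pi}e^{-m^2\pi^2\texit/8}$ per term, so the leading term is $\tfrac{2}{\pi}e^{-\pi^2\texit/8}$, i.e.\ half of the $\tfrac{4}{\pi}e^{-\pi^2\texit/8}$ in \eqref{eq:prop:efficient_sym}; a bound of roughly half the stated one would in fact be false, since for $\texit=1/2$ the stated bound ($\approx 1.027$) is already nearly attained by the empirical mean of $\mathcal{N}_s$ (Figure \ref{fig:time-bis}).

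There is also an index bookkeeping error that breaks the ``yields precisely'' claims. After $n$ passes through Step 3 the bracket width is $U_n-L_n=R_i(4n+1,Y)/R_i(1,Y)$, so the correct tail bound is $\mathbb{P}(\mathcal{N}_s^{\rm loop}>k\mid Y)=C_Y\,R_i(4k+1,Y)/R_i(1,Y)$ for $k\ge0$ (the $k=0$ case being $C_Y$ itself), whereas you sum $R_i(4n-1,\cdot)$ over $n\ge1$: with that indexing the first large-time term is $\tfrac{2}{3\pi}e^{-9\pi^2\texit/8}$, not $\tfrac{4}{\pi}e^{-\pi^2\texit/8}$, and the geometric tail with ratio $e^{-5\pi^2\texit}$ no longer matches the exponent $25\pi^2\texit/8$. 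Finally, for the small-time branch your term-by-term summation of ${\rm erfc}((4n\pm1)/\sqrt{2\texit})$ is only sketched; once the Wald factor $2$ is included, the $k=0$ term alone is $2\,{\rm erfc}(1/\sqrt{2\texit})$, and to land on $\tfrac32\,{\rm erfc}(1/\sqrt{2\texit})+\sqrt{\texit/(2\pi)}\,e^{-1/(2\texit)}$ one must convert part of it via a Gaussian-tail estimate; the paper does this in one stroke through Fubini and the bound $\bigl\lfloor\tfrac14\sqrt{\texit/w}+\tfrac34\bigr\rfloor\le\tfrac14\sqrt{\texit/w}+\tfrac34$, which is the step your sketch would need to reproduce explicitly.
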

In particular, for $\texit =1/2$, the value of the upper bound is approximatively $1.027$ 
which emphasizes that the inequality is quite sharp in comparison with the estimated number of iterations in Figure \ref{fig:time-bis} (solid line/stars). Moreover the bound is reasonably small which permits to confirm the efficiency of the proposed algorithm.
\begin{proof}
The arguments are quite similar to those developed in Theorem 5.1 in Devroye \cite{Devroye-1986}  for the efficiency of the alternating series method. Let us denote by $\mathcal{N}_s^{\rm loop}$ the number of steps of type 3 used in order to go from Step 1 to Step 4, in other words: the number of increments $n\leftarrow n+1$ during one loop. 
By Wald's equation and since the number of Steps 0 in this algorithm is geometrically distributed with parameter $1/2$ (see \eqref{predev}), we get
\[
\mathbb{E}[\mathcal{N}_s]=2\mathbb{E}[ \mathcal{N}_s^{\rm loop}].
\]
Let us first compute the probability of the following event $\{\mathcal{N}_s^{\rm loop}> 0\}$. This event corresponds to $\{V<C\}$. In the following, we shall just recall that the constant $C$ so as $L_1$, $U_1$... depend on $Y$, that's why we use from now on the notation $C=C_Y$. We obtain:
\begin{align*}
\mathbb{P}(\mathcal{N}_s^{\rm loop}> 0\vert Y)=\mathbb{P}(V<C_Y|Y)=C_Y,
\end{align*}
since $0\le C_Y\le 1$ a.s. Let us now consider the event $\{\mathcal{N}_s^{\rm loop}> 1\}$. This event corresponds to $\{CL_1<V< CU_1\}$ and therefore
\begin{align*}
\mathbb{P}(\mathcal{N}_s^{\rm loop}> 1\vert Y)=C_Y (U_1-\max(L_1,0))\le C_Y(U_1-L_1).
\end{align*}
The definition of $L_1$ and $U_1$ leads to
\begin{align*}
\mathbb{P}(\mathcal{N}_s^{\rm loop}> 1\vert Y)&\le C_Y\cdot\frac{R_i(5,Y)}{R_i(1,Y)},
\end{align*}
where $i=1$ for $Y\le \texit $ and $i=2$ otherwise. The same arguments lead to
\begin{align*}
\mathbb{P}(\mathcal{N}_s^{\rm loop}> k\vert Y)&=\mathbb{P}(C_Y L_{k}<V< C_Y U_k|Y)\\
&=C_Y(U_k-L_k)=C_Y\cdot\frac{R_i(4k+1,Y)}{R_i(1,Y)}.
\end{align*}
We deduce
\[
\mathbb{E}[\mathcal{N}_s^{\rm loop}|Y]=\sum_{k\ge 0}\mathbb{P}(\mathcal{N}_s^{\rm loop}> k\vert Y)= C_Y\sum_{k\ge 0}\frac{R_i(4k+1,Y)}{R_i(1,Y)}.
\]
Using the distribution of the random variable $Y$ with density $\hat{h}$ in \eqref{eq:def:hhat}, we obtain
\begin{align}\label{eq:loop}
\mathbb{E}[\mathcal{N}_s^{\rm loop}]&=\sum_{k\ge 0}\int_0^{\texit }\frac{R_1(4k+1,y)}{R_1(1,y)}\ \hat{h}(y)dy+\kappa\sum_{k\ge 0}\int_{\texit }^\infty\frac{R_2(4k+1,y)}{R_2(1,y)}\ \hat{h}(y)dy\nonumber\\
&=\frac{1}{2}\sum_{k\ge 0}\int_0^{\texit }R_1(4k+1,y)\,dy+\frac{1}{2}\sum_{k\ge 0}\int_{\texit }^\infty R_2(4k+1,y)\,dy\nonumber\\
&=:\frac{1}{2}\, \mathcal{A}_1(\texit )+\frac{1}{2}\, \mathcal{A}_2(\texit ).
\end{align}
Combining the definition of $R_1$ in \eqref{eq:pdf_tau_1} and the change of variable $w=\frac{y}{(4k+1)^2}$ leads to
\begin{align*}
\int_0^{\texit }R_1(4k+1,y)\, dy=\frac{2(4k+1)}{\sqrt{2\pi}}\int_0^{\texit }\frac{e^{-\frac{(4k+1)^2}{2y}}}{y^{3/2}}\, dy=\sqrt{\frac{2}{\pi}}\int_0^{\frac{\texit }{(4k+1)^2}}\frac{e^{-\frac{1}{2w}}}{w^{3/2}}\, dw.
\end{align*}
By Fubini's theorem, we obtain
\begin{align*}
\mathcal{A}_1(\texit )&=\sqrt{\frac{2}{\pi}}\int_{0}^\infty \Big(\sum_{k\ge 0}1_{\{  w\le \frac{\texit }{(4k+1)^2}\}}\Big) \frac{e^{-\frac{1}{2w}}}{w^{3/2}}\, dw\\
&=\sqrt{\frac{2}{\pi}}\int_{0}^\infty \Big\lfloor \frac{1}{4} \sqrt{\frac{\texit }{w}}+\frac{3}{4} \Big\rfloor \frac{e^{-\frac{1}{2w}}}{w^{3/2}}\, dw\\
&\le\sqrt{\frac{\texit }{8\pi}}\int_{0}^{\texit } \frac{1}{\sqrt{w}} \ \frac{e^{-\frac{1}{2w}}}{w^{3/2}}\, dw+\frac{3}{\sqrt{8\pi}}\,\int_0^{\texit }\frac{e^{-\frac{1}{2w}}}{w^{3/2}}\, dw.
\end{align*}
By the change of variable $z=1/w$, we get
\begin{equation}\label{eq:upper_A1}
 \sqrt{\frac{\texit }{8\pi}}\int_{0}^{\texit } \frac{1}{\sqrt{w}} \ \frac{e^{-\frac{1}{2w}}}{w^{3/2}}\, dw= \sqrt{\frac{\texit }{8\pi}} \int_{1/\texit }^\infty e^{-z/2}\, dz=\sqrt{\frac{\texit }{2\pi}}\, e^{-\frac{1}{2\texit }}.
\end{equation}
Moreover, using the change of variable $w=r^{-2}/2$, we have
\begin{equation}\label{eq:upper_A1-2}
 \frac{3}{\sqrt{8\pi}}\,\int_0^{\texit }\frac{e^{-\frac{1}{2w}}}{w^{3/2}}\, dw=\frac{3}{\sqrt{\pi}}\int_{1/\sqrt{2\texit }}^\infty e^{-r^2}\,dr=\frac{3}{2}\,{\rm erfc}\Big(\frac{1}{\sqrt{2\texit }}\Big).
\end{equation}
Combining \eqref{eq:upper_A1} and \eqref{eq:upper_A1-2} we obtain the following upper-bound:
\begin{equation}
\label{eq:upper-A1-fin}
\mathcal{A}_1(\texit )\le \sqrt{\frac{\texit }{2\pi}}\, e^{-\frac{1}{2\texit }}+\frac{3}{2}\,{\rm erfc}\Big(\frac{1}{\sqrt{2\texit }}\Big).
\end{equation}
Let us note that $\mathcal{A}_1(\texit )$ becomes small as $\texit $ becomes small.
 Let us now focus our attention to $\mathcal{A}_2$ defined in \eqref{eq:loop}. \begin{align}\label{eq:upper_A2}
\mathcal{A}_2(\texit )&=4\sum_{k\ge 0}\frac{\exp(-(4k+1)^2\frac{\pi^2\texit  }{8})}{(4k+1)\pi}\nonumber\\
&\le \frac{4}{\pi}\, e^{-\frac{\pi^2\texit }{8}}+\frac{4}{5\pi}\,e^{-25\frac{\pi^2 \texit }{8}}\sum_{k\ge 1}\exp\Big( -((4k+1)^2-5^2)\, \frac{\pi^2\texit  }{8} \Big)\nonumber\\
&\le \frac{4}{\pi}\, e^{-\frac{\pi^2\texit }{8}}+\frac{4}{5\pi}\,e^{-25\frac{\pi^2 \texit }{8}}\sum_{k\ge 0}e^{-5k\pi^2 \texit }\nonumber\\
&= \frac{4}{\pi}\, e^{-\frac{\pi^2\texit }{8}}+\frac{4}{5\pi}\,\frac{e^{-25\frac{\pi^2 \texit }{8}}}{1-e^{-5\pi^2 \texit }}.
\end{align}
Combining \eqref{eq:upper-A1-fin} and \eqref{eq:upper_A2} permits to obtain the announced bound.
\end{proof}
Let us now come to the exit simulation in the asymmetric case. Since we know how to handle with symmetric intervals, we shall use this first algorithm {\scriptsize BROWNIAN\_EXIT\_SYMMETRIC} in an iteration procedure in order to solve the asymmetric case. The main idea consists in the following :
\begin{itemize}
\item starting in $x$, we consider the largest  interval centered in $x$ of the type $[x-\delta,x+\delta]$ and included in $[-1,1]$. 
\item We simulate the exit time of this interval denoted by $T_1$ and the exit position will be uniformly distributed in $\{x-\delta,x+\delta\}$. 
\item If $B^x_{T_1}\in\{-1,1\}$ then we set $T=T_1$ and $X=B^x_{T_1}$ else we start a new simulation for the exit time and position of a Brownian motion $\tilde{B}^x$ starting in $x=B_{T_1}$ from the largest symmetric interval centered in $x$ and included in $[-1,1]$. This exit time is denoted by $T_2$. 
\item As above, if $\tilde{B}^x_{T_2}\in\{-1,1\}$ then we set $X=\tilde{B}^x_{T_2}$ and $T=T_1+T_2$ else we start again with a new initial position in the interval $[-1,1]$. 
\end{itemize}
\begin{framed}
\centerline{BROWNIAN EXIT TIME FOR ASYMMETRIC INTERVALS $[a,b]$}

\vspace*{0.2cm}
\centerline{
\framebox[1.1\width]{\scriptsize BROWNIAN\_EXIT\_ASYMM}}
\vspace*{0.5cm}

\noindent
{\bf Input:} $x$ (initial value of the Brownian paths) and $[a,b]$.\\[5pt]
{\bf Step 0: Initialization.}\\ \emph{$X=x$, $T=0$, ${\rm test}=0$, $L=a$, $U=b$ and $\mathcal{N}_{\rm as}=0$. \\[5pt]
 {\bf While} {\rm (test=0)} {\bf do:}\\[5pt]
{\bf Step 1.} Set $D=\min(X-L,U-X)$. Generate a random variable $\tau$ (and the number of iterations $\mathcal{N}_s$) using the algorithm {\scriptsize BROWNIAN\_EXIT\_SYMMETRIC} and define $S=D^2\tau$. Set $T\leftarrow T+S$ and $\mathcal{N}_{\rm as}\leftarrow \mathcal{N}_{\rm as}+\mathcal{N}_{\rm s}$.\\[5pt]
{\bf Step 2.} Generate a random variable $V$ uniformly distributed on $\{-D,D\}$. Set $X\leftarrow X+V$\\[5pt]
{\bf Step 3.} If $X\in \{a,b\}$ then ${\rm test}=1$ else set $L\leftarrow X-L$ and $U\leftarrow U-X$.\\[5pt]
{\bf End While}\\[5pt]
{\bf Outcome:} the exit time $T$ and the exit location $X$ of the interval $[a,b]$  and the total number of iterations $\mathcal{N}_{\rm as}$.
}

\end{framed}
The number of iterations is  stochastically upper-bounded by a geometrically distribution with parameter $1/2$ since we only deal with symmetric intervals. Moreover this random number is independent of the generation cost of any Brownian symmetric exit time and position. Let us finally note that $(T,X)$ and $(\tau, B^x_\tau)$ are obviously identically distributed.

This algorithm can be illustrated by Fig. \ref{fig:exit-nonsymm} for the standard Brownian exit time of the asymmetric interval $[-1.5,2]$.  Of course the algorithm is not restricted to the standard Brownian case, it takes into account any initial position $x$ belonging to the interval $[a,b]$.
\begin{figure}
\centerline{\includegraphics[height=5.5cm]{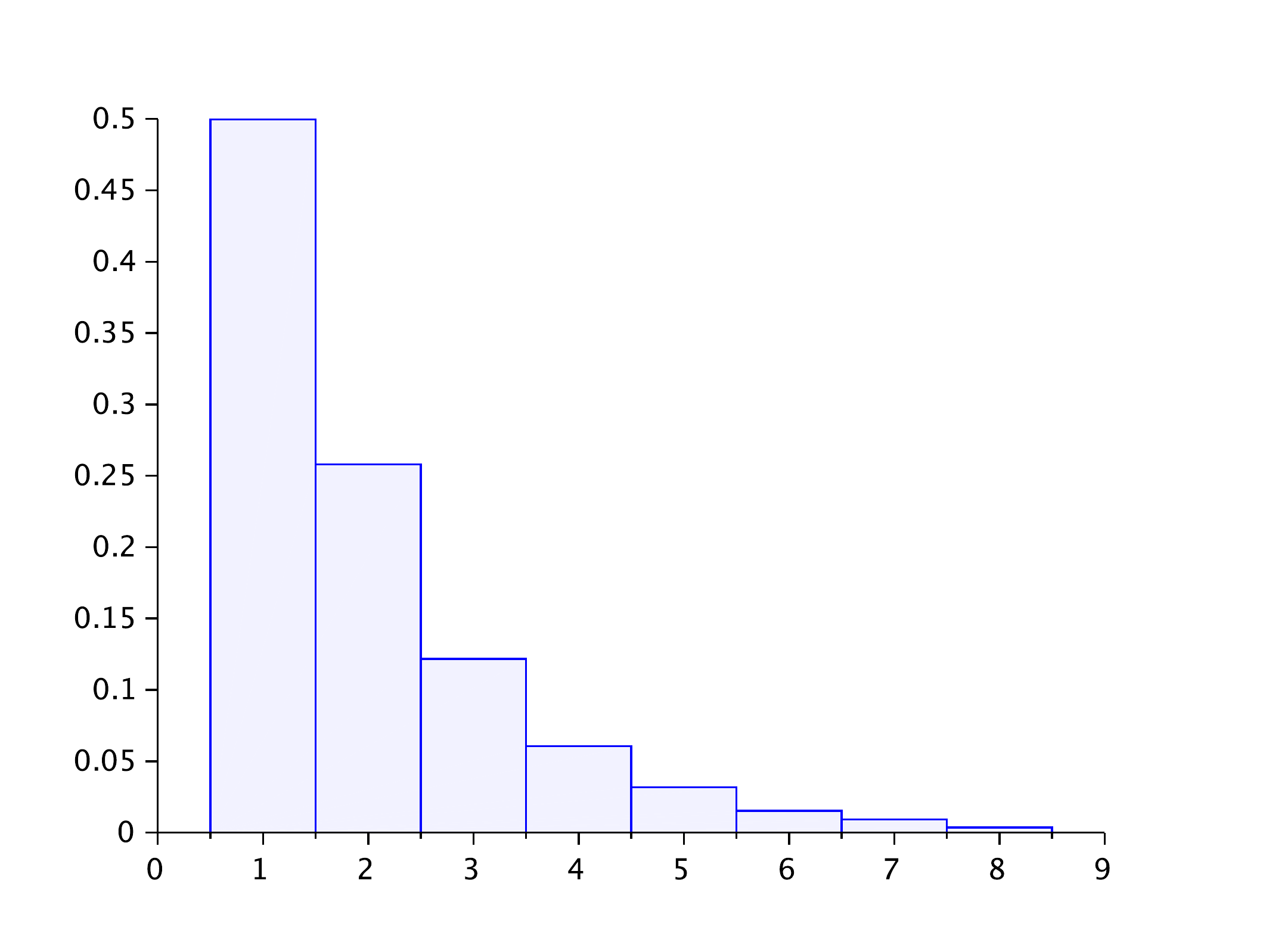}\includegraphics[height=5.5cm]{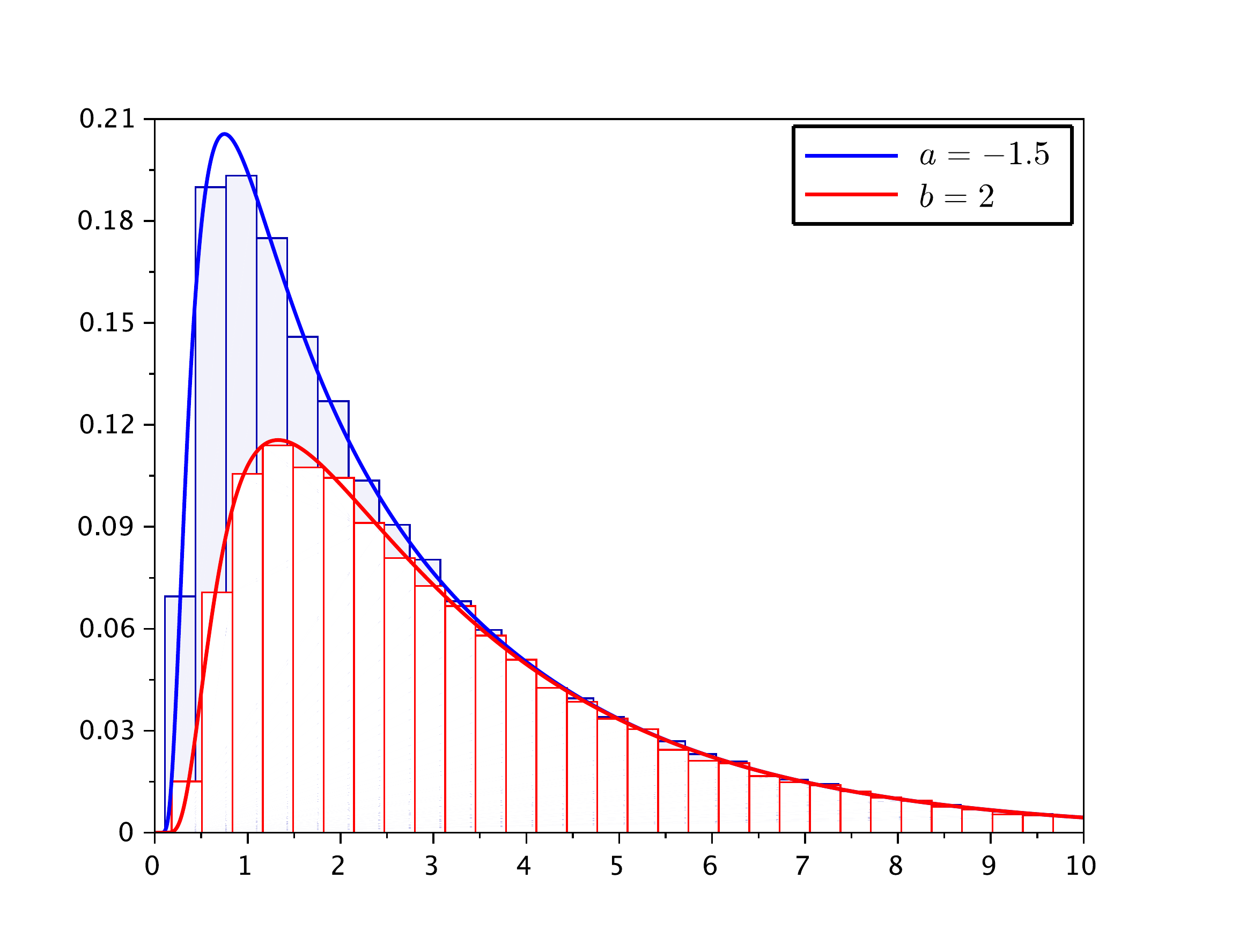}
}
\caption{\small Histogram of the algorithm counter corresponding to the Brownian exit from the interval $[-1.5,2]$ (left), p.d.f. and histograms of the exit time when exiting at the top or at the bottom of this interval (right). A sample of $100\,000$ simulations has been used for these figures and $t_e=0.5$.}
\label{fig:exit-nonsymm}
\end{figure}
\begin{corollary}\label{cor:asymm} The algorithm {\scriptsize BROWNIAN\_EXIT\_ASYMM} involves a random number of calls to {\scriptsize BROWNIAN\_EXIT\_SYMMETRIC} whose efficiency is characterized by the number of iterations $\mathcal{N}_s$. That's why the total number of iterations $\mathcal{N}_{\rm as}$ is directly linked to the efficiency of {\scriptsize BROWNIAN\_EXIT\_ASYMM}. We have
\[
\mathbb{E}[\mathcal{N}_{\rm as}]\le 2\mathbb{E}[\mathcal{N}_s]\le\sqrt{\frac{2\texit }{\pi}}\, e^{-\frac{1}{2\texit }}+3\,{\rm erfc}\Big(\frac{1}{\sqrt{2\texit }}\Big)+\frac{8}{\pi}\, e^{-\frac{\pi^2\texit }{8}}+\frac{8}{5\pi}\, \frac{e^{-25\frac{\pi^2\texit }{8}}}{1-e^{-5\pi^2 \texit }},
\]
where $x\mapsto{\rm erfc}(x)$ is the complementary error function and $\texit $ is the parameter appearing in {\scriptsize BROWNIAN\_EXIT\_SYMMETRIC}. 
\end{corollary}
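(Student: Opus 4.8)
The plan is to express the total counter $\mathcal{N}_{\rm as}$ returned by {\scriptsize BROWNIAN\_EXIT\_ASYMM} as a random sum of independent copies of $\mathcal{N}_s$ and then combine Wald's identity with Proposition~\ref{prop:efficient_sym}. Let $N$ denote the number of times the \textbf{While} loop of {\scriptsize BROWNIAN\_EXIT\_ASYMM} is executed before ${\rm test}=1$, and let $\mathcal{N}_s^{(1)},\ldots,\mathcal{N}_s^{(N)}$ be the successive values returned by the calls to {\scriptsize BROWNIAN\_EXIT\_SYMMETRIC} in Step~1 of each pass. Since every such call launches a fresh, independent run of {\scriptsize BROWNIAN\_EXIT\_SYMMETRIC}, the $\mathcal{N}_s^{(j)}$ are i.i.d.\ with the law of $\mathcal{N}_s$ analysed in Proposition~\ref{prop:efficient_sym}, and the accumulator update $\mathcal{N}_{\rm as}\leftarrow\mathcal{N}_{\rm as}+\mathcal{N}_s$ in Step~1 yields $\mathcal{N}_{\rm as}=\sum_{j=1}^{N}\mathcal{N}_s^{(j)}$.

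Next I would control $\mathbb{E}[N]$. At the start of a pass the current position $X$ lies in $(a,b)$ and Step~1 works with the symmetric interval of half-width $D=\min(X-a,\,b-X)$ centred at $X$; by definition of $D$, one of the two candidate endpoints $X\pm D$ coincides with the nearer boundary of $[a,b]$ while the other remains inside $[a,b]$. Since Step~2 moves $X$ to $X+V$ with $V$ uniform on $\{-D,D\}$ and independent of everything generated so far, each pass terminates the loop with probability at least $1/2$; hence $\mathbb{P}(N>k)\le 2^{-k}$ for every $k\ge 0$, so that $\mathbb{E}[N]\le\sum_{k\ge0}2^{-k}=2$. Moreover the event $\{N>k\}$ is measurable with respect to the uniform signs drawn in the Steps~2 of the first $k$ passes, which are independent of the internal randomness of the Step~1 calls; thus $N$ is independent of $(\mathcal{N}_s^{(j)})_{j\ge1}$ (equivalently, $N$ is a stopping time for the successive passes, so Wald's equation applies directly). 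Consequently
\[
\mathbb{E}[\mathcal{N}_{\rm as}]=\mathbb{E}[N]\,\mathbb{E}[\mathcal{N}_s]\le 2\,\mathbb{E}[\mathcal{N}_s],
\]
and inserting the bound of Proposition~\ref{prop:efficient_sym} for $\mathbb{E}[\mathcal{N}_s]$ and multiplying through by $2$ gives the stated inequality (one checks that the right-hand side of the corollary is exactly twice the right-hand side of Proposition~\ref{prop:efficient_sym}).

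The argument is essentially routine; the one point that deserves care is the geometric domination of $N$, namely the claim that for every admissible current position $X$ the largest symmetric subinterval of $[a,b]$ centred at $X$ has an endpoint on $\partial[a,b]$, so that a boundary point of $[a,b]$ is hit with probability $1/2$ at each pass and $N$ is stochastically dominated by a Geometric$(1/2)$ variable. Once this is granted, the decomposition of $\mathcal{N}_{\rm as}$, the independence (or stopping-time) property, and Wald's equation complete the proof.
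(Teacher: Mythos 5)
Your argument is correct and is essentially the paper's own proof, only spelled out in more detail: the number of calls to the symmetric routine is stochastically dominated by a Geometric$(1/2)$ variable because the largest centred subinterval always has one endpoint on $\partial[a,b]$, Wald's identity then gives $\mathbb{E}[\mathcal{N}_{\rm as}]\le 2\mathbb{E}[\mathcal{N}_s]$, and Proposition \ref{prop:efficient_sym} supplies the bound (whose right-hand side is indeed doubled). No gaps to report.
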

The statement is a direct consequence of the geometrical distributed upper-bound of the number of calls to the symmetric case algorithm on one hand (Wald's identity therefore leads to $\mathbb{E}[\mathcal{N}_{\rm as}]\le 2\mathbb{E}[\mathcal{N}_s]$) and of Proposition \ref{prop:efficient_sym} on the other hand.
%
%
%
%
%
%
%
\section{First exit time for one-dimensional diffusion}
\label{sec:DFET}
This section is concerned with the exit problem for a one-dimensional diffusion. Let us first consider $(X_t,\ t\ge 0)$ the solution of the following stochastic differential equation:
\begin{equation}\label{eq:0}
dX_t=\mu(X_t)dt+\sigma(X_t)dB_t,\quad X_0=x\in[a,b],
\end{equation}
where $(B_t,\ t\ge 0)$ stands for the standard one-dimensional Brownian motion. As already introduced in \eqref{eq:def:tau}, we denote by $\tau_{a,b}(X)$ the first exit time of the interval $[a,b]$. It suffices to assume the existence of a unique weak solution to the equation, see for instance \cite{K-S} for the corresponding conditions.  In order to simplify the presentation of all algorithms, we restrict our study to the constant diffusion case: $\sigma(x)\equiv 1$. Using the classical \emph{Lamperti transform},  we observe that this restriction is not sharp at all. That is why, from now on, $X$ stands for the unique solution of 
\begin{equation}
\label{eq:simple-sde}
dX_t=\mu(X_t)dt+dB_t, \quad t\ge 0, \quad X_0=x\in[a,b].
\end{equation}
Let us note that, for the particular Brownian case: $\mu(x)\equiv 0$, the first exit time has already been presented in Section \ref{sec:BFET}. The aim is to use the results developed in the previous sections when considering the general diffusion case and the important tool for such a strategy is Girsanov's formula. 
%
%
%
%
%
%
%
%
%
%
%
%
%
%
\subsection{First Exit Time Algorithm (DET)}
\label{subsec:DFET-algo}
Let us now consider the exact simulation algorithm which permits to handle with the diffusion exit problem. 
\begin{framed}
\centerline{DIFFUSION EXIT TIME (DET)} 
\centerline{Parameter: $\gamma_0$, input functions $\gamma(\cdot)$ and $\beta(\cdot)$}

\vspace*{0.5cm}

\noindent
 {\bf First initialization:} $\mathcal{N}_{\rm tot}=0$.\\[5pt]
{\bf Step 0: Initialization.} \emph{$Z=x$, $T=0$, ${\rm test}=0$. Here $x$ stands for the initial value of the diffusion. \\[5pt]
{\bf While} {\rm (test=0)} {\bf do:}\\[5pt]
{\bf Step 1.} Generate an expon. distr. random variable $E$ with parameter $\gamma_0$ and   $U$ and $V$ two random variables uniformly distributed on $[0,1]$, the variables $E$, $U$ and $V$ being independent.\\[5pt]
{\bf Step 2.} Simulate the Brownian exit time and location 
\[
(S,Y,\mathcal{N}_{\rm as})=\mbox{\scriptsize BROWNIAN\_EXIT\_ASYMM\, (Z,[a,b]) }
\]
 and set $\mathcal{N}_{\rm tot}\leftarrow \mathcal{N}_{\rm tot}+\mathcal{N}_{\rm as}$.\\[5pt]
{\bf Step 3.} {\bf If} $S<E$ {\bf then} 
\begin{itemize}
\item {\bf if} $U\le \beta(Y)$ {\bf then} set ${\rm test}=1$, $Z\leftarrow Y$ and $T\leftarrow T+S$\\  
\hspace*{1.9cm} {\bf else} go to Step 0 {\bf end if}, 
\end{itemize}
{\bf  else}
\begin{itemize}
\item simulate the Brownian location at time $E$ given that the exit time is larger than $E$:
\(
(Y_c,\mathcal{N}_c)=\mbox{\scriptsize CONDITIONAL\_DISTR\,(Z,[a,b],E)}
\),  and set $\mathcal{N}_{\rm tot}\leftarrow \mathcal{N}_{\rm tot}+\mathcal{N}_c$.
\item if $\gamma_0 V\le \gamma (Y_c)$ then go to Step 0 else set $Z\leftarrow Y_c$ and $T\leftarrow T+E$.
\end{itemize}
{\bf end if.}\\[5pt]
{\bf End While}\\[5pt]
{\bf Outcome:} the exit location $Z$, the exit time $T$ of the interval $[a,b]$  and the efficiency index $\mathcal{N}_{\rm tot}$.
}

\end{framed}
Let $\rho\ge 0$ such that $\gamma(x):=\frac{\mu^2(x)+\mu'(x)}{2}+\rho$ is a non negative function on the interval $[a,b]$. We introduce $\gamma_+=\sup_{x\in[a,b]}\gamma(x)$ and define $\Delta=\int_a^b \mu(y)dy$ and 
\begin{equation}\label{def:beta}
\beta(x)=\min\Big(1,e^{-\Delta}\Big)\, \exp\int_a^x \mu(y)\,dy.
\end{equation}
Let us just notice that $0\le \beta(a)\le 1$ and $0\le \beta(b)\le 1$.
\begin{thm}
\label{thm:general_exit}
The outcome $(Z,T)$ of the Algorithm (DET) with parameter $\gamma_0=\gamma_+$ and input functions $\gamma(\cdot)$ and $\beta(\cdot)$  satisfies: for any non-negative measurable functions $f$ and $g$, 
\begin{equation}\label{eq:thm:exit}
\mathbb{E}[f(T)g(Z)]=\frac{\mathbb{E}_x\Big[f(\tau_{a,b}(X))e^{-\rho\,\tau_{a,b}(X)}g(X_{\tau_{a,b}(X)})\Big]}{\mathbb{E}_x[e^{-\rho\,\tau_{a,b}(X)}]},
\end{equation}
where $X$ stands for the diffusion defined by \eqref{eq:simple-sde}. In particular, if $\mu'+\mu^2$ is a non-negative function on the interval $[a,b]$ (we set $\rho=0$), then the outcome $(Z,T)$ has the same distribution as $(X_{\tau_{a,b}(X)},\tau_{a,b}(X))$.
\end{thm}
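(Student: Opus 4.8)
The plan is to combine Girsanov's transformation with a Poisson thinning argument, and then identify the output of the acceptance/rejection loop as a conditional law.

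First I would fix a Brownian motion $B^x$ started at $x$, set $\tau:=\tau_{a,b}(B^x)$ and $M(y):=\int_a^y\mu(u)\,\dd u$, so that $M(a)=0$ and $M(b)=\Delta$. Girsanov's formula (legitimate up to $\tau$ since $\mu$ is bounded on $[a,b]$: stop at $\tau\wedge n$ where the exponential is a genuine martingale, then let $n\to\infty$ by monotone convergence, $\tau<\infty$ a.s.) gives, for any non-negative functional $\Phi$ of the stopped path,
\[
\mathbb{E}_x\big[\Phi((X_s)_{s\le\tau_{a,b}(X)})\big]=\mathbb{E}_x\Big[\Phi((B^x_s)_{s\le\tau})\,\exp\Big(\int_0^\tau\mu(B^x_s)\,\dd B^x_s-\tfrac12\int_0^\tau\mu^2(B^x_s)\,\dd s\Big)\Big].
\]
Applying It\^o's formula to $s\mapsto M(B^x_s)$ rewrites $\int_0^\tau\mu(B^x_s)\,\dd B^x_s=M(B^x_\tau)-M(x)-\tfrac12\int_0^\tau\mu'(B^x_s)\,\dd s$, so, using $\gamma=\tfrac12(\mu^2+\mu')+\rho$, the weight above equals $\exp\big(M(B^x_\tau)-M(x)-\int_0^\tau\gamma(B^x_s)\,\dd s+\rho\tau\big)$. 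Taking $\Phi=f(\tau)e^{-\rho\tau}g(B_\tau)$ the factor $e^{\rho\tau}$ cancels the external weight $e^{-\rho\tau}$; moreover $B^x_\tau\in\{a,b\}$, where by \eqref{def:beta} one has $e^{M(y)}=\beta(y)/\min(1,e^{-\Delta})$. Writing $c:=\min(1,e^{-\Delta})$, this yields the Girsanov identity
\[
\mathbb{E}_x\big[f(\tau_{a,b}(X))e^{-\rho\tau_{a,b}(X)}g(X_{\tau_{a,b}(X)})\big]=\frac{e^{-M(x)}}{c}\,\mathbb{E}_x\Big[f(\tau)\,g(B^x_\tau)\,\beta(B^x_\tau)\,e^{-\int_0^\tau\gamma(B^x_s)\,\dd s}\Big].
\]

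Next I would give a probabilistic reading of the exponential factor. Let $N$ be a rate-$\gamma_0$ Poisson process on $\mathbb{R}_+$ with events $T_1<T_2<\cdots$, independent of $B^x$, and mark each $T_k$ ``rejecting'' independently with probability $\gamma(B^x_{T_k})/\gamma_0$ --- a genuine probability since $\gamma_0=\gamma_+\ge\gamma\ge0$. By the thinning (Campbell) formula, conditionally on $B^x$ the probability that no event $T_k\le\tau$ is rejecting equals $\exp(-\int_0^\tau\gamma(B^x_s)\,\dd s)$. Then I would check that one pass of the (DET) While--loop, between two consecutive returns to Step~0, realizes exactly this picture: by memorylessness of the exponential $E$ and the strong Markov property of Brownian motion at the successive times $E$ (and at the sub-interval exit times used internally by {\scriptsize BROWNIAN\_EXIT\_ASYMM}), the successive ``chunks'' concatenate into a single Brownian path run from $x$ until it exits $[a,b]$, carrying an independent rate-$\gamma_0$ Poisson process; the test $\gamma_0 V\le\gamma(Y_c)$ performs the rejection at a Poisson event ($Y_c$ being drawn from the law of $B_E$ given $\tau>E$ via {\scriptsize CONDITIONAL\_DISTR}), while in the case $S<E$ the test $U\le\beta(Y)$ performs the final $\beta$-acceptance at the exit position. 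Denoting by $\mathcal A$ the event that such a pass ends with ${\rm test}=1$ --- on which $T=\tau$ and $Z=B^x_\tau$ --- one gets
\[
\mathbb{E}\big[f(T)g(Z)\mathbf{1}_{\mathcal A}\big]=\mathbb{E}_x\Big[f(\tau)\,g(B^x_\tau)\,\beta(B^x_\tau)\,e^{-\int_0^\tau\gamma(B^x_s)\,\dd s}\Big]=\frac{c}{e^{-M(x)}}\,\mathbb{E}_x\big[f(\tau_{a,b}(X))e^{-\rho\tau_{a,b}(X)}g(X_{\tau_{a,b}(X)})\big],
\]
the last equality being the Girsanov identity above.

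Finally, since (DET) repeats independent copies of such a pass until $\mathcal A$ occurs and returns the $(T,Z)$ of the successful one, its output has the law of $(T,Z)$ conditioned on $\mathcal A$; note $\mathbb{P}(\mathcal A)=\tfrac{c}{e^{-M(x)}}\,\mathbb{E}_x[e^{-\rho\tau_{a,b}(X)}]\in(0,1]$, so the loop halts almost surely. Dividing the last display for general $f,g$ by the same display with $f\equiv g\equiv1$ cancels the prefactor $c/e^{-M(x)}$ and produces exactly \eqref{eq:thm:exit}; specialising to $\rho=0$ (available when $\mu'+\mu^2\ge0$ on $[a,b]$), the weight $e^{-\rho\tau}$ disappears and $(Z,T)\stackrel{(d)}{=}(X_{\tau_{a,b}(X)},\tau_{a,b}(X))$. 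The main obstacle is the bookkeeping in the third paragraph: one must verify rigorously, through memorylessness and the strong Markov property, that the loop's restart-and-concatenate mechanism --- including the nested iteration inside {\scriptsize BROWNIAN\_EXIT\_ASYMM} and the correctness of the conditional sampler at the Poisson times --- reproduces ``one Brownian excursion from $x$ to $\partial[a,b]$ together with one independent marked Poisson process'', so that the accumulated acceptance weight of a pass is the functional appearing above; the rest is the Girsanov computation and a routine conditional-probability identity.
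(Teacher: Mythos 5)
Your argument is correct and follows essentially the same route as the paper: Girsanov's transformation reduces the diffusion exit law to a Brownian expectation weighted by $\beta(B_\tau)\,e^{-\int_0^\tau\gamma(B_s)\,ds}$, the exponential factor is realized by thinning an independent rate-$\gamma_+$ Poisson process, and the acceptance/rejection conditioning then yields \eqref{eq:thm:exit}. The ``bookkeeping'' you defer --- that one pass of the loop reproduces the marked-Poisson/Brownian picture --- is precisely what the paper's Step 2 makes rigorous through the recursion $\mathcal{I}_n(x,f,g)=I_n(x,f,g\times\beta)$ over the number of exponential variables generated.
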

\begin{proof} The proof of Theorem \ref{thm:general_exit} is organized as follows: first we introduce a stochastic theoretical model which is based on the one-dimensional Brownian motion and on an independent Poisson process. This model is closely related to the exact simulation introduced by Beskos and Roberts in \cite{beskos2005exact}. Secondly we consider in details the outcome of Algorithm (DET) and point out the strong link between the outcome and random variables derived from the theoretical model of the first step. Finally we shall use the Girsanov transformation in order to conclude the proof.\\
\emph{Step 1. A theoretical model.}
Let us consider for any non-negative functions $f$ and $g$:
\begin{equation}
\label{eq:Girs}
I(x,f,g):=\mathbb{E}_x\Big[f(\tau_{B})g(B_{\tau_{B}}) e^{-\int_0^{\tau_{B}}\gamma(B_s)\,ds}\Big],
\end{equation}
where $\tau_B$ means $\tau_{a,b}(B)$ for notational simplicity and $B$ represents a Brownian motion starting in $x\in [a,b]$ and $\gamma$ a non negative continuous function. 
Let us now introduce a Poisson point process $N$ on the space $\mathbb{R}_+\times [0,\gamma_{+}]$ with Lebesgue intensity measure and independent of the Brownian paths. If we reorganize the points of the Poisson process with respect to the abscissa, we obtain a sequence of points defined by $(\xi_n,U_n)_{n\ge 1}$ where $(U_n)$ are independent uniformly distributed r.v. on $[0,\gamma_+]$ and $\xi_n:=\sum_{j=1}^n e_j$ with $(e_j)_{j\ge 1}$ a sequence of independent exponentially distributed r.v. with parameter $\gamma_+$. Both sequences $(U_n)_n$ and $(e_n)_n$ are independent. Therefore, for any subset $A$ of $ \mathbb{R}_+\times [0,\gamma_{+}]$, we get
\[
N(A)=\sum_{n\ge 1}1_{\{ (\xi_n,U_n)\in A \}}\quad \mbox{and}\quad \mathbb{P}(N(A)=0)=e^{-\lambda(A)},
\]
 where $\lambda(\cdot)$ is the Lebesgue measure. In particular, if $A$ is defined as follows:
 \[
 A:=\Big\{ (t,y)\in [0,\tau_B]\times [0,\gamma_{+}]:\ y\le \gamma(B_t) \Big\}
 \]
the independence of both the Brownian motion and the Poisson process implies
\begin{align*}
I(x,f,g)&=\mathbb{E}_x\Big[ f(\tau_B)g(B_{\tau_B})1_{\{ N(A)=0 \}} \Big]=\sum_{n\ge 0}I_n(x,f,g)
\end{align*}
where
\begin{align*}
I_n(x,f,g):=\left\{\begin{array}{l}
\mathbb{E}_x\Big[ f(\tau_B)g(B_{\tau_B})1_{\{ N(A)=0 \}} 1_{\{ \xi_n<\tau_B \le \xi_{n+1} \}}\Big],\quad\mbox{for}\ n\ge 1,\\[8pt]
\mathbb{E}_x\Big[ f(\tau_B)g(B_{\tau_B})1_{\{ N(A)=0 \}} 1_{\{ \tau_B \le \xi_{1} \}}\Big],\quad n=0.\end{array}\right.
\end{align*}
Let us first observe that $\{ \tau_B \le \xi_{1} \}\subset  \{N(A)=0 \}$ which implies that
\begin{equation}\label{eq:I0}
I_0(x,f,g)=\mathbb{E}_x\Big[ f(\tau_B)g(B_{\tau_B}) 1_{\{ \tau_B \le \xi_{1} \}}\Big].
\end{equation}
For $n\ge 1$, due to the Markov property of the Brownian path and since the Poisson process is independent of the Brownian motion, we have
\begin{align}\label{eq:recu0}
I_n(x,f,g)&=\mathbb{E}_x\Big[ f(\tau_B)g(B_{\tau_B})1_{\{ U_1>\gamma(B_{\xi_1}),\ldots, U_n> \gamma(B_{\xi_n}) \}} 1_{\{ \xi_n<\tau_B \le \xi_{n+1} \}}\Big]\nonumber\\
&=\mathbb{E}_x\Big[I_{n-1}(B_{\xi_1}, f(\xi_1+\cdot),g)1_{\{ U_1>\gamma(B_{\xi_1}),\, \xi_1<\tau_B\}}\Big].
\end{align}
Since $\xi_1$ is exponentially distributed and independent of the Brownian motion, we get
\begin{align}\label{eq:recur}
I_n(x,f,g)&=\int_{0}^\infty \mathbb{E}_x[I_{n-1}(B_t,f(t+\cdot),g)1_{\{ U_1>\gamma(B_t),t<\tau_B \}} ] \gamma_+e^{-\gamma_+ t}\,dt\nonumber \\
&=\int_{0}^\infty \mathbb{E}_x[I_{n-1}(B_t,f(t+\cdot),g)1_{\{ U_1>\gamma(B_t)\}}\vert t<\tau_B  ] p(t,x) \gamma_+e^{-\gamma_+ t}\,dt\nonumber\\
&=\int_{\mathbb{R}_+\times[a,b]} I_{n-1}(y,f(t+\cdot),g)\Gamma(y)p(t,x)\mathcal{M}(x,dt,dy),
\end{align}
where $\Gamma(y):=\mathbb{P}(U_1>\gamma(y))$ and $p(t,x)=\mathbb{P}_x(\tau_B>t)$. The probability measure $\mathcal{M}(x,dt,dy)$ represents the distribution of the couple $(T,Y)$ where $T$ is an exponentially distributed r.v. with parameter $\gamma_+$ and $Y$ represents the distribution of $B_T$ the value of the Brownian motion at time $T$ starting in $x$ and given $\{\tau_B>T\}$, $T$ and $(B_t)$ being independent. The recurrence relations \eqref{eq:recu0} and \eqref{eq:recur} are satisfied for any $n\ge 1$.\\[3pt]
\emph{Step 2. Relation between Algorithm (DET) and the Brownian-Poisson theoretical model presented in Step 1.}\\ If we denote by $\mathcal{N}_0$ the number of \emph{Step 0} used during the procedure which leads to the computation of the outcome $(Z,T)$ and $\mathcal{N}_1$ the number of exponentially distributed random variables generated (\emph{Step 1}), then we obviously obtain
\begin{align}\label{eq:I0eq}
&\mathbb{E}_x[f(T)g(Z)1_{\{\mathcal{N}_0=1,\ \mathcal{N}_1=1 \}}] =\mathbb{E}[f(S)g(Y)1_{\{U\le \beta(Y),\, S<E  \}}]\nonumber\\
&  =\mathbb{E}[f(S)g(Y)\beta(Y)1_{\{ S<E  \}}]=\mathbb{E}_x\Big[ f(\tau_B)g(B_{\tau_B})\beta(B_{\tau_B}) 1_{\{ \tau_B \le \xi_{1} \}}\Big]\nonumber\\
&=I_0(x,f,g\times\beta)
\end{align}
using \eqref{eq:recu0}.\\  Let us now consider that $\mathcal{N}_1>1$.  On the event $\{\mathcal{N}_0=1\}\cap \{\mathcal{N}_1>1\}$, we observe that
\begin{itemize}
\item the first exponentially distr. r.v. $E$ satisfies $E<S$ where $S$ is given by \emph{Step 2} and corresponds to the Brownian exit time of the interval $[a,b]$. 
\item $Y_c$ given that $E<S$ satisfies $V> \gamma(Y_c)$, where $V$ is uniformly distributed and $Y_c$ becomes the new starting point for the next use of the exit problem.
\end{itemize}
In other words, if we denote by $\mathcal{I}_n:=\mathbb{E}_x[f(T)g(Z)1_{\{\mathcal{N}_0=1,\ \mathcal{N}_1=n+1 \}}]$ then \eqref{eq:I0} leads to $\mathcal{I}_0(x,f,g)=I_0(x,f,g\cdot\beta)$. Moreover by \eqref{eq:recu0}
\begin{align*}
\mathcal{I}_1(x,f,g)&=\mathbb{E}_x[f(T)g(Z)1_{\{\mathcal{N}_0=1,\ \mathcal{N}_1=2 \}}]\\
&=\mathbb{E}_x[I_0(Y_c,f(E+\cdot),g\cdot\beta)1_{\{V>\gamma(Y_c), E<S \}}]\\
&=\mathbb{E}_x[I_0(B_{\xi_1},f(\xi_1+\cdot),g\times\beta)1_{\{U_1>\gamma(B_{\xi_1}), \xi_1<\tau_B \}}]=I_1(x,f,g\times\beta),
\end{align*}
and using the same arguments, we prove easily that  $\mathcal{I}_n$ satisfies the recurrence relations \eqref{eq:recu0} and \eqref{eq:recur}. We deduce that 
\[
\mathcal{I}_n(x,f,g)=I_n(x,f,g\times\beta),\quad\forall n\ge 0.
\]
Therefore 
\begin{equation}
\label{eq:ens}
I(x,f,g\times\beta)=\sum_{n\ge 0} I_n(x,f,g\times\beta)=\sum_{n\ge 0} \mathcal{I}_n(x,f,g)=\mathbb{E}_x[f(T)g(Z)1_{\{\mathcal{N}_0=1\}}].
\end{equation}
Since the Algorithm (DET) is an acceptance/rejection algorithm, \eqref{eq:ens} leads to
\begin{align*}
\mathbb{E}_x[f(T)g(Z)]&=\frac{\mathbb{E}_x[f(T)g(Z)1_{\{\mathcal{N}_0=1\}}]}{\mathbb{P}(\mathcal{N}_0=1)}=\frac{I(x,f,g\times\beta)}{I(x,1,\beta)}.
\end{align*}
By \eqref{eq:Girs} 
\begin{align}\label{eq:fract}
\mathbb{E}_x[f(T)g(Z)]&=\frac{\mathbb{E}_x\Big[f(\tau_{B})g(B_{\tau_{B}})\beta(B_{\tau_{B}}) e^{-\int_0^{\tau_{B}}\gamma(B_s)\,ds}\Big]}{\mathbb{E}_x\Big[\beta(B_{\tau_{B}})e^{-\int_0^{\tau_{B}}\gamma(B_s)\,ds}\Big]}.
\end{align}
\emph{Step 3. The Girsanov transformation.} In this last part of the proof, the aim is to link the distribution of $(T,Z)$ described in \eqref{eq:fract} to the distribution of $ (X_{\tau_{a,b}(X)},\tau_{a,b}(X))$ where $X$ is the diffusion defined by \eqref{eq:simple-sde}. Using the definition of the function $\beta$ and  It\^o's formula, we obtain that
\begin{align}\label{def:martingale}
M_t&:=\frac{\beta(B_t)}{\beta(B_0)}\ e^{\rho\, t-\int_0^{t}\gamma(B_s)\,ds}=\exp\Big\{\int_{B_0}^{B_t}\mu(y)\, dy-\frac{1}{2}\int_0^t \mu'(B_s)+\mu^2(B_s)   \,ds\Big\}\nonumber\\
&=\exp\Big\{\int_{0}^{t}\mu(B_s)\, dB_s-\frac{1}{2}\int_0^t \mu^2(B_s)   \,ds\Big\},
\end{align}
is an exponential martingale. Moreover the stopped martingale $(M_{t\wedge \tau_B})_{t\ge 0}$ is bounded. The stopping theorem therefore implies $\mathbb{E}_x[M_{\tau_B}]=1$ which means that 
\[
\mathbb{E}_x\Big[\beta(B_{\tau_{B}})e^{\rho\,\tau_B-\int_0^{\tau_{B}}\gamma(B_s)\,ds}\Big]=\beta(x).
\]
The expression \eqref{eq:fract} and Girsanov's transformation permits to obtain 
\begin{align*}
\mathbb{E}_x[f(T)g(Z)]&=\frac{\mathbb{E}_x\Big[f(\tau_{B})g(B_{\tau_{B}})M_{\tau_B}e^{-\rho \tau_B}\Big]}{\mathbb{E}_x\Big[M_{\tau_B}e^{-\rho \tau_B}\Big]}\\
&=\frac{\mathbb{E}_x\Big[f(\tau_{a,b}(X))e^{-\rho\,\tau_{a,b}(X)}g(X_{\tau_{a,b}(X)})\Big]}{\mathbb{E}_x[e^{-\rho\,\tau_{a,b}(X)}]},
\end{align*}
where $X$ stands for the diffusion defined by \eqref{eq:simple-sde}. In particular, if $\rho=0$ that is $\mu^2+\mu'$ is a non-negative function, we get
\[
\mathbb{E}_x[f(T)g(Z)]=\mathbb{E}_x\Big[f(\tau_{a,b}(X))g(X_{\tau_{a,b}(X)})\Big].
\]
\end{proof}
%
%
%
%
%
%
%
\subsection{Efficiency of the algorithm}
\label{subsec:DFET-efficiency}

Let us now focus our attention to the analysis of Algorithm (DET). Of course since the algorithm permits to simulate exactly the random variables desired, there is no error terms to deal with, it suffices therefore to describe the time needed by the algorithm. We introduced in Algorithm (DET) the random variable $\mathcal{N}_{\rm tot}$ which permits to have a precise idea of the efficiency. Let us just add the information concerning the starting position of the Brownian motion $B_0=x$ with the following notation $\mathcal{N}_{\rm tot}=\mathcal{N}_{\rm tot}^{x}$.  Let us also note that the efficiency of Algorithm (DET) in particular depends on two different parameters: $\texit$ which appears in the use of the algorithm  $\mbox{\scriptsize BROWNIAN\_EXIT\_SYM\,}$ and therefore also in $\mbox{\scriptsize BROWNIAN\_EXIT\_ASYMM\,}$ and $\tcond\in\mathbb{R}_+$ which appears in $\mbox{\scriptsize CONDITIONAL\_DISTR}$ (if $t\le \tcond$ we consider the algorithm associated to the small values of $t$ and for $t>\tcond$ the algorithm associated to the large values).

Using informations concerning the cost of each part of the algorithm, namely Proposition \ref{prop:average_number_first_kind}, Proposition \ref{prop:average_number_second_kind} and Proposition \ref{prop:efficient_sym}, we obtain a bound for $\mathbb{E}[\mathcal{N}_{\rm tot}^x]$.
\begin{thm}\label{thm:efficiency-exit} The random variable $\mathcal{N}_{\rm tot}^x$ which is one of the outcomes of Algorithm (DET) and represents its cost satisfies the following bound: there exists a constant $C(\tcond,\texit)>0$ independent of both the interval $[a,b]$ and the starting position $x$, such that
\[
\mathbb{E}[\mathcal{N}_{\rm tot}^x]\le \frac{C(\tcond,\texit)}{\mathbb{E}_x[e^{-\rho \tau_{a,b}(X)}]\mathbb{E}_{(a+b)/2}[e^{-\gamma_+ \tau_{a,b}(B)}]},\quad \forall x\in]a,b[.
\]
We recall that $\tau_{a,b}$ stands for the first exit time of the interval $[a,b]$. The parameters $\rho$ and $\gamma_+$ are defined in the introduction of Theorem \ref{thm:general_exit}. 
\end{thm}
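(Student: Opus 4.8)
\emph{Proof proposal.} The plan is to write the cost $\mathcal N^x_{\rm tot}$ as a sum over the successive Brownian ``segments'' processed by Algorithm (DET) and to bound it by two nested applications of Wald's identity: one over all segments of the run, one over the i.i.d.\ restarts triggered by ``go to Step~0''. Each pass through Steps~1--3 treats exactly one segment: it always runs {\scriptsize BROWNIAN\_EXIT\_ASYMM} (cost $\mathcal N_{\rm as}$), and it runs {\scriptsize CONDITIONAL\_DISTR} (cost $\mathcal N_c$) precisely when the exponential clock $E$ rings before the Brownian exit time $S$. Writing $\mathcal S$ for the total number of segments, $\mathcal N^x_{\rm tot}=\sum_{j=1}^{\mathcal S}\bigl(\mathcal N_{\rm as}^{(j)}+\mathbf 1_{\{E^{(j)}<S^{(j)}\}}\mathcal N_c^{(j)}\bigr)$. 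First I would bound the expected cost of a single segment \emph{uniformly} in the position $Z$ at which it starts and in $[a,b]$. The exit part is immediate from Corollary~\ref{cor:asymm}: $\mathbb E[\mathcal N_{\rm as}^{(j)}\mid Z]\le c_1(\texit)$. For the conditional-distribution part, the key cancellation is that, conditioning on $E=t$, {\scriptsize CONDITIONAL\_DISTR} is invoked only on the event $\{S>t\}$ of probability $\mathbb P_Z(\tau_{a,b}(B)>t)$, whereas Propositions~\ref{prop:average_number_first_kind}--\ref{prop:average_number_second_kind}, combined with the scaling identity~\eqref{eq:scaling} (which sends $[a,b]$ to $[-1,1]$ and $t$ to $4t/(b-a)^2$), bound $\mathbb E[\mathcal N_c^{(j)}\mid E=t,Z]$ by $\mathcal U\bigl(4t/(b-a)^2\bigr)\big/\mathbb P_Z(\tau_{a,b}(B)>t)$, where $\mathcal U:=\mathcal U_1$ on $[0,\tcond]$ and $\mathcal U:=\mathcal U_2$ on $(\tcond,\infty)$. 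The factor $\mathbb P_Z(\tau_{a,b}(B)>t)$ cancels, so integrating against the density of $E\sim\mathrm{Exp}(\gamma_+)$ and rescaling gives $\mathbb E[\mathbf 1_{\{E<S\}}\mathcal N_c^{(j)}\mid Z]\le\int_0^\infty\lambda e^{-\lambda u}\mathcal U(u)\,du$ with $\lambda=\gamma_+(b-a)^2/4$; since $\mathcal U_1$ is bounded on $[0,\tcond]$ and $\mathcal U_2(u)\to0$ as $u\to\infty$, $\mathcal U$ is globally bounded by some $c_3(\tcond)$, whence this is $\le c_3(\tcond)$. So every segment costs on average at most $C_1(\tcond,\texit):=c_1(\texit)+c_3(\tcond)$, and since $\mathcal S$ is a stopping time for the filtration generated by the segments, Wald's identity (supermartingale form, requiring only a uniform conditional bound) gives $\mathbb E[\mathcal N^x_{\rm tot}]\le C_1(\tcond,\texit)\,\mathbb E[\mathcal S]$.

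Next I would bound $\mathbb E[\mathcal S]$ by decomposing the run into i.i.d.\ attempts, each restarted afresh at $x$. By the analysis leading to~\eqref{eq:ens} in the proof of Theorem~\ref{thm:general_exit}, an attempt succeeds with probability $\mathbb P_x(\mathcal N_0=1)=I(x,1,\beta)=\mathbb E_x\bigl[\beta(B_{\tau_B})e^{-\int_0^{\tau_B}\gamma(B_s)ds}\bigr]$, which the exponential-martingale identity of Step~3 there identifies with $\beta(x)\,\mathbb E_x[e^{-\rho\,\tau_{a,b}(X)}]$; hence the number of attempts $\mathcal N_0$ is geometric with $\mathbb E[\mathcal N_0]=\bigl(\beta(x)\mathbb E_x[e^{-\rho\,\tau_{a,b}(X)}]\bigr)^{-1}$. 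Inside one attempt, the number $K$ of segments satisfies $K\le 1+\#\{\text{clock rings before the concatenated Brownian exit}\}$, and by the strong Markov property the latter count has the law of $\mathrm{Poisson}(\gamma_+\tau_{a,b}(B^x))$, so $\mathbb E[K]\le 1+\gamma_+\mathbb E_x[\tau_{a,b}(B)]\le 1+\gamma_+(b-a)^2/4$; by the elementary inequality $\cosh s\ge 1+s^2/2$ with $s=(b-a)\sqrt{\gamma_+/2}$ this is at most $\cosh\!\bigl((b-a)\sqrt{\gamma_+/2}\bigr)=\mathbb E_{(a+b)/2}[e^{-\gamma_+\tau_{a,b}(B)}]^{-1}$. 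A second Wald identity over the i.i.d.\ attempts gives $\mathbb E[\mathcal S]=\mathbb E[\mathcal N_0]\,\mathbb E[K]$, and assembling everything,
\[
\mathbb E[\mathcal N^x_{\rm tot}]\le \frac{C_1(\tcond,\texit)}{\beta(x)\,\mathbb E_x[e^{-\rho\,\tau_{a,b}(X)}]\,\mathbb E_{(a+b)/2}[e^{-\gamma_+\tau_{a,b}(B)}]},
\]
the nuisance factor $\beta(x)^{-1}\le e^{\int_a^b|\mu(y)|dy}$ being absorbed into the constant. (The two Wald identities are made rigorous by first estimating the truncations $\mathbb E[\mathcal S\wedge N]$, $\mathbb E[\mathcal N^x_{\rm tot}\wedge N]$ and letting $N\to\infty$ by monotone convergence, which also yields finiteness.)

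The step I expect to be the main obstacle is exactly this uniform control of the conditional-distribution cost: a crude bound on $\mathbb E[\mathcal N_c]$ is useless because $\mathcal N_c$ grows with its time argument while the conditioning event $\{\tau>t\}$ it must realize by rejection becomes rare, so $\mathbb E[\mathcal N_c\mid t]$ is not bounded in $t$; the resolution is to notice that this growth is precisely compensated by the probability $\mathbb P_Z(\tau_{a,b}(B)>t)$ that {\scriptsize CONDITIONAL\_DISTR} is called at all, leaving only the numerators $\mathcal U_1,\mathcal U_2$, which were arranged in Section~\ref{sec:BMconstrained} --- through the switch at the threshold $\tcond$ between the two series expansions --- to be bounded on their respective ranges. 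Granting that, the two Wald identities, the Poisson bound on $K$, and the $\cosh$ inequality are routine.
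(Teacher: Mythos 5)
Your architecture reproduces the paper's two key estimates exactly: the uniform bound on the cost of one pass through Steps~1--3, obtained from Corollary~\ref{cor:asymm} for $\mathcal N_{\rm as}$ and from the cancellation of $\mathbb P_Z(\tau_{a,b}(B)>t)$ against the denominators of Propositions~\ref{prop:average_number_first_kind}--\ref{prop:average_number_second_kind} after the rescaling \eqref{eq:scaling} (this is precisely the paper's $C_0(\texit)+C_1(\tcond)$ with $\Theta=\mathcal U_1 1_{\{\cdot\le\tcond\}}+\mathcal U_2 1_{\{\cdot>\tcond\}}$), and the geometric/Wald reduction over the i.i.d.\ restarts triggered by Step~0. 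Where you genuinely diverge is the control of the number of segments inside one attempt: the paper writes the recursive inequality $\mathbb E[\mathcal N^{x,1}_{\rm tot}]\le C_0+C_1+\mathbb P(S>E)\sup_{x}\mathbb E[\mathcal N^{x,1}_{\rm tot}]$ and then uses the concavity of $y\mapsto\mathbb P_y(\tau>t)$ to bound $\mathbb P_x(S>E)$ by its value at the midpoint, which directly produces the factor $\mathbb E_{(a+b)/2}[e^{-\gamma_+\tau_{a,b}(B)}]$; you instead dominate the segment count by $1+\mathrm{Poisson}(\gamma_+\tau_{a,b}(B))$, apply a second Wald identity, and recover the same factor through $1+\gamma_+(b-a)^2/4\le\cosh\bigl((b-a)\sqrt{\gamma_+/2}\bigr)$. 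Both routes are legitimate and of comparable difficulty; your Poisson/Wald count is arguably more transparent probabilistically, and it even yields the slightly sharper intermediate bound $1+\gamma_+(x-a)(b-x)$ before being weakened to the $\cosh$ form.

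There is, however, one genuine gap relative to the statement you are asked to prove: your last line absorbs $\beta(x)^{-1}\le e^{\int_a^b|\mu(y)|\,dy}$ into $C(\tcond,\texit)$, but that quantity depends on the interval $[a,b]$ (and on $\mu$ over it), whereas the theorem requires a constant independent of $[a,b]$ and of $x$; as written you have only proved the analogue of Proposition~\ref{prop:efficiency-exit-gen}, i.e.\ a bound with $\beta(x)$ left in the denominator. The paper avoids this by identifying the acceptance probability as $\mathbb P(\mathcal N_0=1)=\mathbb E_x[e^{-\rho\,\tau_{a,b}(X)}]$ in \eqref{eq:geom-bound}, with no $\beta(x)$ factor, invoking Step~3 of the proof of Theorem~\ref{thm:general_exit}. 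Note that your computation of this probability (giving $\beta(x)\,\mathbb E_x[e^{-\rho\,\tau_{a,b}(X)}]$, via $I(x,1,\beta)=\beta(x)\mathbb E_x[M_{\tau_B}e^{-\rho\tau_B}]$) is in fact the one that follows literally from the martingale identity \eqref{def:martingale}, and it matches the way $\beta_m(x)$ appears explicitly in Proposition~\ref{prop:efficiency-exit-gen}; so either you must supply an argument that legitimately removes $\beta(x)$ (which the paper asserts but your derivation does not give), or the interval-independence claimed for the constant cannot be obtained by the absorption you propose. This discrepancy, not the Wald or Poisson steps, is the point that needs to be resolved before your proof establishes Theorem~\ref{thm:efficiency-exit} as stated.
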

Each term appearing in the denominator, that is $\mathbb{E}_x[e^{-\rho \tau_{a,b}(X)}]$ on one hand and $\mathbb{E}_{(a+b)/2}[e^{-\gamma_+ \tau_{a,b}(B)}]$ on the other hand, tends to $0$ when the interval size $b-a$ tends to infinity. It is therefore important to choose $\rho$ and $\gamma_+$ as small as possible in order to obtain a sharper bound. The exit problem of the Brownian motion can be precisely described using classical results on Laplace transforms and differential equations, see for instance \cite{Darling} or \cite{Borodin}. We obtain
\[
\mathbb{E}_{(a+b)/2}[e^{-\gamma_+ \tau_{a,b}(B)}]=\Big\{\cosh\Big(\sqrt{\frac{\gamma_+}{2}}(b-a)\Big)\Big\}^{-1}.
\]
Let us also note that $\mathbb{E}_x[e^{-\rho \tau_{a,b}(X)}]$ can be linked to  the two linearly independent solutions of the differential equation (see, for instance \cite{Darling})
\[
\frac{1}{2}\,\frac{d^2u}{dx^2}+\mu(x)\frac{du}{dx}-\lambda u=0,
\]
which permits to describe the asymptotic behaviour as $b-a$ tends to infinity. For the link between the Laplace transform and the speed measure, see for instance \cite{Bass}.

\begin{proof} The counter $\mathcal{N}_{\rm tot}$ introduced in the algorithm can be decomposed as follows: 
\[
\mathcal{N}_{\rm tot}^x=\sum_{k\ge 1}\mathcal{N}_{\rm tot}^{x,k},
\] 
where $\mathcal{N}_{\rm tot}^{x,k}$ represents the number of counter increases observed in-between the $k$-th and $(k+1)$-th passage through the item \emph{Step 0}. We recall that we defined $\mathcal{N}_0$ in the proof of Theorem \ref{thm:general_exit}: it corresponds to the number of \emph{Step 0} necessary to obtain the desired outcome. Since Algorithm (DET) is an acceptance-rejection algorithm, the random variable $\mathcal{N}_0$ is geometrically distributed. Let us also note that $\mathcal{N}_{\rm tot}^{x,k}=0$ a.s. on the event $\{\mathcal{N}_0<k\}$ and conditionally to $\{\mathcal{N}_0\ge k\} $, $\mathcal{N}_{\rm tot}^{x,k}$ has the same distribution as $\mathcal{N}_{\rm tot}^{x,1}$.
Hence
\begin{align}\label{eq:geom-bound}
\mathbb{E}[\mathcal{N}_{\rm tot}^x]&=\sum_{k\ge 1}\mathbb{E}[\mathcal{N}_{\rm tot}^{x,k} ]=\sum_{k\ge 1}\mathbb{E}[\mathcal{N}_{\rm tot}^{x,k} 1_{\{ \mathcal{N}_0\ge k \}}]=\mathbb{E}[\mathcal{N}_{\rm tot}^{x,1}] \sum_{k\ge 1}\mathbb{P}( \mathcal{N}_0\ge k )\nonumber\\
&=\frac{\mathbb{E}[\mathcal{N}_{\rm tot}^{x,1}]}{\mathbb{P}(\mathcal{N}_0=1)}=\frac{\mathbb{E}[\mathcal{N}_{\rm tot}^{x,1}]}{\mathbb{E}_x[e^{-\rho \tau_{a,b}(X)}]}.
\end{align}
The last equality is related to the third step in the proof of Theorem \ref{thm:general_exit}. Let us now describe $\mathbb{E}[\mathcal{N}_{\rm tot}^{x,1}]$.  Let $(S,Y,\mathcal{N}_{\rm as})$ stands for the result of the first use of the function  \mbox{\scriptsize BROWNIAN\_EXIT\_ASYMM\,} and $(Y_c,\mathcal{N}_c)$ of the first use of \mbox{\scriptsize CONDITIONAL\_DISTR\,}, we can therefore distinguish two different cases. 
\begin{itemize}
\item if $S<E$ then $\mathcal{N}^{x,1}_{\rm tot}=\mathcal{N}_{\rm as}$.
\item if $S>E$ then $\mathcal{N}^{x,1}_{\rm tot}=\mathcal{N}_{\rm as}+\mathcal{N}_c+\widehat{\mathcal{N}}^{Y_c,1}_{\rm tot}1_{\{ \gamma_0V>\gamma(Y_c) \}}$ where $\widehat{\mathcal{N}}^{x,1}_{\rm tot}$ is an independent copy of $\mathcal{N}^{x,1}_{\rm tot}$. Such a property is essentially based on the Markov property of the Brownian paths.
\end{itemize}
We deduce
\begin{align}\label{eq:opt}
\mathbb{E}[ \mathcal{N}^{x,1}_{\rm tot} ]&\le \mathbb{E}[ \mathcal{N}_{\rm as} ]+\mathbb{E}[ \mathcal{N}_{c}1_{\{S>E\}} ]+\mathbb{E}[\widehat{\mathcal{N}}^{Y_c,1}_{\rm tot}1_{\{S>E\}}1_{\{ \gamma_0V>\gamma(Y_c) \}}]\nonumber\\
&\le \mathbb{E}[ \mathcal{N}_{\rm as} ]+\mathbb{E}[ \mathcal{N}_{c}1_{\{S>E\}} ]+\mathbb{E}[\widehat{\mathcal{N}}^{Y_c,1}_{\rm tot}1_{\{S>E\}}]\nonumber\\
&\le\mathbb{E}[ \mathcal{N}_{\rm as} ]+\, \mathbb{E}[\mathbb{E}[ \mathcal{N}_{c}| E ]1_{\{S>E\}} ]+\mathbb{P}(S>E)\sup_{x\in[a,b]}\mathbb{E}[\mathcal{N}^{x,1}_{\rm tot}].
\end{align}
\begin{itemize}
\item Let us note that $\mathbb{P}(S>E)=\mathbb{P}_x(\tau_{a,b}(B)>\xi)$ where $\tau_{a,b}(B)$ is the exit time of the Brownian motion from the interval $[a,b]$ and $\xi$ is exponentially distributed with parameter $\gamma_+$. Using the scaling property \eqref{eq:scaling}, we have
\[
\mathbb{P}_x(\tau_{a,b}(B)>\xi)=\mathbb{P}_y(\tau>4\xi(b-a)^{-2}),
\]
where $\tau$ is the first Brownian exit time of the normalized interval $[-1,1]$ and $y=\frac{2x-a-b}{b-a}$.  Since $y\mapsto \mathbb{P}_y(\tau>4\xi(b-a)^{-2})$ is a concave function whose derivative vanishes for $y=0$ (see the expression \eqref{eq:pdf_gen}), we get 
\[
\mathbb{P}_x(\tau_{a,b}(B)>\xi)\le \mathbb{P}_0(\tau>4\xi(b-a)^{-2}).
\]
\item Since $\mathcal{N}_{\rm as}$ is the cost of the function  $\mbox{\scriptsize BROWNIAN\_EXIT\_ASYMM\,}(x,[a,b]) $ and using the scaling property \eqref{eq:scaling}, we obtain that it is equal to the cost of $\mbox{\scriptsize BROWNIAN\_EXIT\_ASYMM\,}(y,[-1,+1]) $ which satisfies due to Corollary \ref{cor:asymm}: \( \mathbb{E}[\mathcal{N}_{\rm as}]\le C_0(\texit)\) with
\[
C_0(\texit):=\sqrt{\frac{2\texit }{\pi}}\, e^{-\frac{1}{2\texit }}+3\,{\rm erfc}\Big(\frac{1}{\sqrt{2\texit }}\Big)+\frac{8}{\pi}\, e^{-\frac{\pi^2\texit }{8}}+\frac{8}{5\pi}\, \frac{e^{-25\frac{\pi^2\texit }{8}}}{1-e^{-5\pi^2 \texit }},
\]
where $x\mapsto{\rm erfc}(x)$ is the complementary error function and $\texit $ is the parameter appearing in {\scriptsize BROWNIAN\_EXIT\_SYMMETRIC}. This parameter $\texit $ satisfying \eqref{eq:interval} can be chosen in order to minimize this average.
\item Moreover we need some information on $\mathbb{E}_x[ \mathcal{N}_{c}| E=t ]$ where $\mathcal{N}_{c}$ is the cost of the function $\mbox{\scriptsize CONDITIONAL\_DISTR\,}(x,[a,b],t)$. Due to the scaling property \eqref{eq:scaling}, we know that the cost of this function is identical as the cost of  $\mbox{\scriptsize CONDITIONAL\_DISTR\,}(y,[-1,1],4t(b-a)^{-2})$. We choose a parameter $\tcond>0$ such that $4t(b-a)^{-2}\le \tcond $ corresponds to the algorithm for small time values and $4t(b-a)^{-2}>\tcond $ corresponds to large times. Combining Proposition \ref{prop:average_number_first_kind} and Proposition \ref{prop:average_number_second_kind} permits to have the following bound: for any $x\in [a,b]$,
\[
\mathbb{E}_x[ \mathcal{N}_{c}| E=t ]\le \frac{\Theta(4t(b-a)^{-2})}{\mathbb{P}_y(\tau>4t(b-a)^{-2})}\ \mbox{with}\ \Theta(t)=\mathcal{U}_1(t)1_{t\le \tcond }+\mathcal{U}_2(t)1_{t> \tcond }.
\]
Let us introduce $C_1(\tcond ):=\sup_{t\ge 0}\Theta(t)<\infty$, then 
\begin{align*}
\mathbb{E}[\mathbb{E}[ \mathcal{N}_{c}| E ]1_{\{S>E\}} ]&=\int_0^\infty \gamma_+\mathbb{E}[\mathbb{E}[ \mathcal{N}_{c}| E=t ]1_{\{S>t\}} ]e^{-\gamma_+ t}\,dt\\
&\le \int_0^\infty \gamma_+\mathbb{P}(S>t) \frac{\Theta(4t(b-a)^{-2})}{\mathbb{P}_y(\tau>4t(b-a)^{-2})}e^{-\gamma_+ t}\,dt\\
&\le C_1(\tcond )\int_0^\infty \gamma_+\,e^{-\gamma_+ t}\,dt=C_1(\tcond ).
\end{align*}
\end{itemize}
Using these three items and \eqref{eq:opt}, we obtain
\begin{align*}
\mathbb{E}[ \mathcal{N}^{x,1}_{\rm tot} ]&\le \frac{\mathbb{E}[\mathcal{N}_{\rm as}]+\mathbb{E}[\mathbb{E}[ \mathcal{N}_{c}| E ]1_{\{S>E\}} ]}{\mathbb{P}_0(\tau\le 4\xi(b-a)^{-2})}\le \frac{C_0(\texit)+C_1(\tcond )}{\mathbb{E}_0[e^{-\gamma_+(b-a)^2\tau/4}]}.
\end{align*}
The identity \eqref{eq:geom-bound} permits to conclude the proof.

\end{proof}

\subsection{Modifications of Algorithm (DET)}
Under the drift condition $\mu'+\mu^2\ge 0$, Algorithm (DET) permits to simulate in an exact way the exit time of the interval $[a,b]$ for diffusion processes. Let us now improve this algorithm in order to deal with any one dimensional diffusion. This generalization first requires a modified algorithm with outcome $(X_{\tau_{a,b}(X)\wedge \kappa},\tau_{a,b}(X)\wedge \kappa)$ for any $\kappa>0$. The simulation of $(X_{\tau_{a,b}(X)},\tau_{a,b}(X))$ can then be obtained by iteration due to the diffusion Markov property.

Let us present Algorithm ($\kappa$-DET).
\begin{framed}
\centerline{MODIFIED DIFFUSION EXIT TIME ($\kappa$-DET)} 
\centerline{Parameters: $\rho$ and $\gamma_0$, input functions $\gamma(\cdot)$ and $\beta_m(\cdot)$}

\vspace*{0.5cm}

\noindent
{\bf First initialization:} $\mathcal{N}_{\rm tot}=0$.\\[5pt]
{\bf Step 0: Initialization.} \emph{$K=\kappa$, $Z=x$, $T=0$, ${\rm test}=0$. Here $x$ stands for the initial value of the diffusion and $\kappa$ the time upper-bound.\\[5pt]
{\bf While} {\rm (test=0)} {\bf do:}\\[5pt]
{\bf Step 1} Generate an expon. distr. random variable $E$ with parameter $\gamma_0$ and   $U$, $V$ and $W$ three random variables uniformly distributed on $[0,1]$, the variables $E$, $U$, $V$ and $W$ being independent.\\[5pt]
{\bf Step 2.} Simulate the Brownian exit time and location 
\[
(S,Y,\mathcal{N}_{\rm as})=\mbox{\scriptsize BROWNIAN\_EXIT\_ASYMM\, (Z,[a,b]) }
\]
 and set $\mathcal{N}_{\rm tot}\leftarrow \mathcal{N}_{\rm tot}+\mathcal{N}_{\rm as}$.\\[5pt]
{\bf Step 3.} {\bf If} $S=\min(K,E,S)$ {\bf then} 
\begin{itemize}
\item {\bf if} $U\le \beta_m(Y)$ and $W\le e^{-\rho\,(K-S)}$ \\ 
\hspace*{2cm} {\bf then} set ${\rm test}=1$, $Z\leftarrow Y$ and $T\leftarrow T+K$\\  
\hspace*{2cm} {\bf else} go to Step 0 {\bf end if}, 
\end{itemize}
{\bf  elseif} $K=\min(K,E,S)$
\begin{itemize}
\item simulate the conditional Brownian location at time $K$:\\
\(
(Y_c,\mathcal{N}_c)=\mbox{\scriptsize CONDITIONAL\_DISTR\,(Z,[a,b],K)}
\)  and set $\mathcal{N}_{\rm tot}\leftarrow \mathcal{N}_{\rm tot}+\mathcal{N}_c$.
\item {\bf if} $U\le \beta_m(Y_c)$ {\bf then} set ${\rm test}=1$, $Z\leftarrow Y$ and $T\leftarrow T+S$\\  
\hspace*{2cm} {\bf else} go to Step 0 {\bf end if}, 
\end{itemize}
{\bf  elseif} $E=\min(K,E,S)$
\begin{itemize}
\item simulate the conditional Brownian location at time $E$:\\
\(
(Y_c,\mathcal{N}_c)=\mbox{\scriptsize CONDITIONAL\_DISTR\,(Z,[a,b],E)}
\) and set $\mathcal{N}_{\rm tot}\leftarrow \mathcal{N}_{\rm tot}+\mathcal{N}_c$.
\item if $V\le \gamma (Y_c)$ then go to Step 0 else set $Z\leftarrow Y_c$, $T\leftarrow T+E$ and $ K\leftarrow K-E$.
\end{itemize}
{\bf end if.}\\[5pt]
{\bf End While}\\[5pt]
{\bf Outcome:} the stopping time $T$ corresponding to the minimum between the diffusion exit time of the interval $[a,b]$ and the constant time $\kappa$,  the location $Z$ of the stopped diffusion at time $T$ and the efficiency index $\mathcal{N}_{\rm tot}$.}

\end{framed}

Let $\rho\ge 0$ such that $\gamma(x):=\frac{\mu^2(x)+\mu'(x)}{2}+\rho$ is a non negative function on the interval $[a,b]$. We recall that $\gamma_+$ and $\beta$  are defined in \eqref{def:beta}. We introduce 
\begin{equation}
\label{eq:def:betam}
\beta_m(x)=\frac{\beta(x)}{\sup_{x\in[a,b]}\beta(x)}.
\end{equation}
Let us just notice that $0\le \beta_m(x)\le 1$ for any $x\in[a,b]$.
\begin{thm}
\label{thm:general_exit_extension}
The outcome $(Z,T)$ of the Algorithm ($\kappa$-DET) with parameter $\rho$, $\gamma_0=\gamma_+$ and input functions $\gamma(\cdot)$ and $\beta_m(\cdot)$ has the same distribution as $(X_{\tau_{a,b}(X)\wedge \kappa},\tau_{a,b}(X)\wedge \kappa)$, where $X$ stands for the diffusion defined by \eqref{eq:simple-sde}.
\end{thm}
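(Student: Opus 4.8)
The plan is to mirror the three steps of the proof of Theorem~\ref{thm:general_exit}, inserting the deterministic horizon $\kappa$ into the Brownian--Poisson model at every stage; I abbreviate $\tau_B=\tau_{a,b}(B)$ and $\tau_X=\tau_{a,b}(X)$. For \emph{Step~1} I would keep the Brownian motion $B$ started at $x$ and the independent Poisson point process $N$ on $\mathbb{R}_+\times[0,\gamma_+]$ with Lebesgue intensity, and introduce, for non-negative measurable $f,g$,
\[
I_\kappa(x,f,g):=\mathbb{E}_x\Big[f(\tau_B\wedge\kappa)\,g(B_{\tau_B\wedge\kappa})\,e^{-\int_0^{\tau_B\wedge\kappa}\gamma(B_s)\,ds}\Big]=\mathbb{E}_x\Big[f(\tau_B\wedge\kappa)\,g(B_{\tau_B\wedge\kappa})\,1_{\{N(A_\kappa)=0\}}\Big],
\]
where $A_\kappa:=\{(t,y)\in[0,\tau_B\wedge\kappa]\times[0,\gamma_+]:\ y\le\gamma(B_t)\}$, the second identity coming from $\mathbb{P}(N(A_\kappa)=0\mid B)=e^{-\lambda(A_\kappa)}$ as in Theorem~\ref{thm:general_exit}. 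Decomposing $\{N(A_\kappa)=0\}$ over the number $n$ of Poisson points with abscissa $\le\tau_B\wedge\kappa$, and using the Markov property of $B$ and the memorylessness of the exponential gaps, one obtains a recurrence of the shape of \eqref{eq:recu0}--\eqref{eq:recur}, now carrying the bookkeeping of a remaining horizon (initialised at $\kappa$, decreased by each retained exponential time) and whose base case $n=0$ splits into $\{\tau_B<\kappa\}$ and $\{\tau_B\ge\kappa\}$ --- precisely the two non-rejecting exits of Step~3 of Algorithm ($\kappa$-DET).

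\emph{Step~2} matches the algorithm to this model. Let $\mathcal{N}_0$ be the number of visits to Step~0. On $\{\mathcal{N}_0=1\}$ the inner loop realises exactly this Poisson thinning on $[0,\tau_B\wedge\kappa]$; it terminates either via $S=\min(K,E,S)$ --- which forces $\tau_B<\kappa$, contributes the weight $\beta_m(B_{\tau_B})$ from the test $U\le\beta_m(Y)$ and the further weight $e^{-\rho(\kappa-\tau_B)}$ from the test $W\le e^{-\rho(K-S)}$, and outputs $(T,Z)=(\tau_B,B_{\tau_B})$ --- or via $K=\min(K,E,S)$ --- which forces $\tau_B\ge\kappa$, contributes only $\beta_m(B_\kappa)$ from $U\le\beta_m(Y_c)$, and outputs $(T,Z)=(\kappa,B_\kappa)$. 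The crucial algebraic check is that, since $\gamma=(\mu^2+\mu')/2+\rho$, the product of the Poisson survival weight $e^{-\int_0^{\tau_B\wedge\kappa}\gamma(B_s)\,ds}$ with the terminal weights equals, in \emph{both} cases, the single expression $e^{-\rho\kappa}\,\beta_m(B_{\tau_B\wedge\kappa})\,e^{\rho(\tau_B\wedge\kappa)}\,e^{-\int_0^{\tau_B\wedge\kappa}\gamma(B_s)\,ds}$ (on the $K$-branch because there $\tau_B\wedge\kappa=\kappa$, so $e^{-\rho\kappa}e^{\rho(\tau_B\wedge\kappa)}=1$ and the $W$-test is absent). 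Summing the recurrence of Step~1 over $n$ then yields
\[
\mathbb{E}_x\big[f(T)g(Z)\,1_{\{\mathcal{N}_0=1\}}\big]=e^{-\rho\kappa}\,\mathbb{E}_x\Big[f(\tau_B\wedge\kappa)\,g(B_{\tau_B\wedge\kappa})\,\beta_m(B_{\tau_B\wedge\kappa})\,e^{\rho(\tau_B\wedge\kappa)-\int_0^{\tau_B\wedge\kappa}\gamma(B_s)\,ds}\Big].
\]

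\emph{Step~3} is the Girsanov step. I would reuse the exponential martingale $M_t$ of \eqref{def:martingale}; since $0\le t\wedge\tau_B\le\kappa$, $B_{t\wedge\tau_B}\in[a,b]$ and $\gamma\ge0$, the stopped process $(M_{t\wedge\tau_B})_{0\le t\le\kappa}$ is bounded, so optional stopping at $\tau_B\wedge\kappa$ gives $\mathbb{E}_x[M_{\tau_B\wedge\kappa}]=1$. By the definition \eqref{eq:def:betam} of $\beta_m$ one has the pointwise identity $\beta_m(B_{\tau_B\wedge\kappa})\,e^{\rho(\tau_B\wedge\kappa)-\int_0^{\tau_B\wedge\kappa}\gamma(B_s)\,ds}=\beta_m(x)\,M_{\tau_B\wedge\kappa}$. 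Taking $f=g=1$ in the display of Step~2 therefore gives $\mathbb{P}(\mathcal{N}_0=1)=e^{-\rho\kappa}\beta_m(x)$, so that, ($\kappa$-DET) being an acceptance/rejection scheme, the factor $e^{-\rho\kappa}$ cancels in $\mathbb{E}_x[f(T)g(Z)]=\mathbb{E}_x[f(T)g(Z)1_{\{\mathcal{N}_0=1\}}]/\mathbb{P}(\mathcal{N}_0=1)$, leaving $\mathbb{E}_x[f(T)g(Z)]=\mathbb{E}_x[f(\tau_B\wedge\kappa)g(B_{\tau_B\wedge\kappa})M_{\tau_B\wedge\kappa}]$. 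Conditioning on $\mathcal{F}_{\tau_B\wedge\kappa}$ replaces $M_{\tau_B\wedge\kappa}$ by $M_\kappa$, and since $M_\kappa$ is the Radon--Nikodym density on $\mathcal{F}_\kappa$ of the law of $X$ with respect to that of $B$, this equals $\mathbb{E}_x[f(\tau_X\wedge\kappa)g(X_{\tau_X\wedge\kappa})]$; as product functions $f\otimes g$ determine the joint law, the claimed identity in distribution follows.

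The main obstacle --- and the only genuinely new ingredient compared with Theorem~\ref{thm:general_exit} --- is the algebraic check in Step~2: one must see that the single extra Bernoulli test $W\le e^{-\rho(K-S)}$, present only on the branch where $B$ leaves $[a,b]$ before the clock $\kappa$ elapses, supplies \emph{exactly} the factor $e^{\rho(\tau_B\wedge\kappa)}$ needed to upgrade the Poisson survival weight $e^{-\int_0^{\tau_B\wedge\kappa}\gamma(B_s)\,ds}$ to the Girsanov martingale $M_{\tau_B\wedge\kappa}$, and that it does so uniformly over the two terminal branches up to the harmless branch-independent constant $e^{-\rho\kappa}$. The same computation explains the passage from $\beta$ to $\beta_m$: on the horizon branch the acceptance weight is evaluated at an \emph{interior} point of $(a,b)$, where $\beta$ need not be $\le1$, whereas $\beta_m\le1$ everywhere by construction. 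Secondary care is needed to legitimise truncating the Poisson expansion at the random time $\tau_B\wedge\kappa$ and to check that the recurrence on the pair (position, remaining horizon) closes; both follow, as in Theorem~\ref{thm:general_exit}, from the strong Markov property of $B$ and the independence of $N$ and $B$.
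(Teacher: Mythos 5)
Your proposal is correct and follows essentially the same route as the paper: the Poisson--Brownian model with the horizon $\kappa$, the identification of the algorithm's branches with $I_n(x,f\times e^{-\rho(\kappa-\cdot)},g\times\beta_m,\kappa)$ (including the key cancellation $e^{-\rho(\kappa-\tau_B\wedge\kappa)}=1$ on the horizon branch), the acceptance/rejection normalization with the $e^{-\rho\kappa}$ factor cancelling, and the Girsanov step via the martingale $M$ of \eqref{def:martingale}. Your Step 3 is slightly more explicit than the paper's (computing $\mathbb{P}(\mathcal{N}_0=1)=e^{-\rho\kappa}\beta_m(x)$ and passing to $M_\kappa$ by optional sampling), but it is the same argument.
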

\begin{prop}\label{prop:efficiency-exit-gen} The random variable $\mathcal{N}_{\rm tot}^x$ which is one of the outcomes of Algorithm ($\kappa$-DET) satisfies the following bound: there exists a constant $C(\tcond,\texit)>0$ independent of both the interval $[a,b]$ and the starting position $x$, such that
\[
\mathbb{E}[\mathcal{N}_{\rm tot}^x]\le \frac{C(\tcond,\texit)\ e^{\rho\, \kappa}}{\beta_m(x)\ \mathbb{E}_{(a+b)/2}[e^{-\gamma_+ (\tau_{a,b}(B)\wedge \kappa)}]},\quad \forall x\in]a,b[.
\]
We recall that $\tau_{a,b}$ stands for the first exit time of the interval $[a,b]$. The parameters $\rho$ and $\gamma_+$ are defined in the introduction of Theorem \ref{thm:general_exit}.
\end{prop}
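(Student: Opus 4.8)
The plan is to reproduce, almost verbatim, the scheme of the proof of Theorem~\ref{thm:efficiency-exit}, the only two ingredients that have to be recomputed being the per--sweep acceptance probability $\mathbb{P}(\mathcal{N}_0=1)$ (which now incorporates the horizon--correction test) and the uniform lower bound for the probability that the exponential clock rings before the Brownian exit \emph{and} before the time budget $\kappa$ is exhausted. For the first point I would re--run the Brownian--Poisson bookkeeping of Steps~1--2 of the proof of Theorem~\ref{thm:general_exit}, now with the Poisson region $A$ truncated at time $\tau_B\wedge\kappa$, where $\tau_B:=\tau_{a,b}(B)$. Reading off the acceptance instructions of ($\kappa$-DET), an accepted sweep carries the weight $\beta_m(B_{\tau_B\wedge\kappa})$ from the test $U\le\beta_m(\cdot)$, the weight $e^{-\int_0^{\tau_B\wedge\kappa}\gamma(B_s)\,ds}$ from the thinning of the Poisson events, and, when $\tau_B<\kappa$, the extra factor $e^{-\rho(\kappa-\tau_B\wedge\kappa)}$ from the test $W\le e^{-\rho(K-S)}$. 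Since $\beta_m=\beta/\sup_{[a,b]}\beta$, this gives
\[
\mathbb{P}(\mathcal{N}_0=1)=\mathbb{E}_x\Big[\beta_m(B_{\tau_B\wedge\kappa})\,e^{-\rho(\kappa-\tau_B\wedge\kappa)}\,e^{-\int_0^{\tau_B\wedge\kappa}\gamma(B_s)\,ds}\Big]=\frac{e^{-\rho\kappa}}{\sup_{[a,b]}\beta}\,\mathbb{E}_x\big[M_{\tau_B\wedge\kappa}\big]\,\beta(x),
\]
where $M$ is the exponential martingale of \eqref{def:martingale}; as $(M_{t\wedge\tau_B})_{t\ge0}$ is bounded, optional stopping at the bounded stopping time $\tau_B\wedge\kappa$ yields $\mathbb{E}_x[M_{\tau_B\wedge\kappa}]=1$, hence $\mathbb{P}(\mathcal{N}_0=1)=e^{-\rho\kappa}\,\beta_m(x)$, which already explains the factor $e^{\rho\kappa}/\beta_m(x)$ in the statement.

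For the cost, exactly as in \eqref{eq:geom-bound}, $\mathcal{N}_0$ is geometric and every sweep restarts from the initial state, so $\mathbb{E}[\mathcal{N}_{\rm tot}^x]=\mathbb{E}[\mathcal{N}_{\rm tot}^{x,1}]/\mathbb{P}(\mathcal{N}_0=1)$ and it suffices to bound $\mathbb{E}[\mathcal{N}_{\rm tot}^{x,1}]$ uniformly in $x$. I would set up the same recursion as in Theorem~\ref{thm:efficiency-exit}, now over pairs (starting position $x'\in[a,b]$, remaining budget $K'\le\kappa$): in the first loop Step~2 costs $\mathcal{N}_{\rm as}$; {\scriptsize CONDITIONAL\_DISTR} is called, at time $K'\wedge E$, precisely on $\{K'\wedge E\le S\}$ and costs $\mathcal{N}_c$; and the sweep continues only on $\{E=\min(K',E,S)\}\cap\{\gamma_+V>\gamma(Y_c)\}$, restarting from $Y_c$ with budget $K'-E\le\kappa$ by the Markov property. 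Writing $m^*:=\sup_{x'\in[a,b],\,K'\in[0,\kappa]}\mathbb{E}[\mathcal{N}_{\rm tot}^{x',1}]$, this yields
\[
m^*\le \sup\mathbb{E}[\mathcal{N}_{\rm as}] + \sup\mathbb{E}[\mathcal{N}_c\,1_{\{K'\wedge E\le S\}}] + m^*\,\sup_{x'}\mathbb{P}_{x'}(E\le \tau_{a,b}(B)\wedge\kappa).
\]
Corollary~\ref{cor:asymm} gives $\mathbb{E}[\mathcal{N}_{\rm as}]\le C_0(\texit)$ uniformly, and the scaling identity \eqref{eq:scaling} combined with Propositions~\ref{prop:average_number_first_kind}--\ref{prop:average_number_second_kind} gives $\mathbb{E}[\mathcal{N}_c\,1_{\{K'\wedge E\le S\}}]\le C_1(\tcond):=\sup_{t\ge0}\Theta(t)<\infty$, word for word as in the proof of Theorem~\ref{thm:efficiency-exit} (the conditioning event $\{K'\wedge E\le S\}$ cancels the denominator $\mathbb{P}_y(\tau>\cdot)$ of the cost bound). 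Finally $\mathbb{P}_{x'}(E\le\tau_{a,b}(B)\wedge\kappa)=1-\mathbb{E}_{x'}[e^{-\gamma_+(\tau_{a,b}(B)\wedge\kappa)}]$, and since
\[
\mathbb{E}_{x'}\big[e^{-\gamma_+(\tau_{a,b}(B)\wedge\kappa)}\big]=1-\gamma_+\int_0^\kappa e^{-\gamma_+ s}\,\mathbb{P}_{x'}(\tau_{a,b}(B)>s)\,ds
\]
is a \emph{convex} function of $x'$ on $[a,b]$ (because $x'\mapsto\mathbb{P}_{x'}(\tau_{a,b}(B)>s)$ is concave with maximum at the midpoint, a fact already exploited in Theorem~\ref{thm:efficiency-exit}) and is invariant under the reflection $x'\mapsto a+b-x'$, its infimum over $[a,b]$ is attained at $x'=(a+b)/2$. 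Solving the recursion gives $m^*\le (C_0(\texit)+C_1(\tcond))\big/\mathbb{E}_{(a+b)/2}[e^{-\gamma_+(\tau_{a,b}(B)\wedge\kappa)}]$, and combining with the acceptance probability yields the claimed bound with $C(\tcond,\texit):=C_0(\texit)+C_1(\tcond)$.

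The recursion and the cost estimates are essentially a transcription of Theorem~\ref{thm:efficiency-exit}, so the main obstacle is the acceptance probability: one must check that the three independent rejection mechanisms of ($\kappa$-DET)---the $\beta_m$--test, the Poisson thinning, and the horizon correction $W\le e^{-\rho(K-S)}$---recombine into exactly the stopped exponential martingale $M_{\tau_B\wedge\kappa}$, so that optional stopping collapses $\mathbb{P}(\mathcal{N}_0=1)$ to $e^{-\rho\kappa}\beta_m(x)$. A secondary subtlety is that, contrary to Theorem~\ref{thm:efficiency-exit}, the relevant Laplace functional $x\mapsto\mathbb{E}_x[e^{-\gamma_+(\tau_{a,b}(B)\wedge\kappa)}]$ is convex rather than concave, which is precisely the reason why the truncated exit time $\tau_{a,b}(B)\wedge\kappa$---and not $\tau_{a,b}(B)$---must appear in the denominator of the bound.
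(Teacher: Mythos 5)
Your proposal is correct and follows essentially the same route as the paper's own proof in the Appendix: the geometric decomposition over sweeps with $\mathbb{P}(\mathcal{N}_0=1)=e^{-\rho\kappa}\beta_m(x)$ obtained from the stopped exponential martingale $M_{\tau_B\wedge\kappa}$, the recursion over starting points and remaining budgets $K\le\kappa$ bounded by $C_0(\texit)+C_1(\tcond)$, and the continuation probability controlled at the midpoint via $\mathbb{P}_{(a+b)/2}(\tau_{a,b}(B)\wedge\kappa\le\xi)=\mathbb{E}_{(a+b)/2}[e^{-\gamma_+(\tau_{a,b}(B)\wedge\kappa)}]$. The only differences (merging the two {\scriptsize CONDITIONAL\_DISTR} cases into one bound, and phrasing the midpoint extremality through convexity and symmetry of the Laplace functional rather than concavity of $x\mapsto\mathbb{P}_x(\tau>s)$ directly) are cosmetic.
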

The proof of Proposition \ref{prop:efficiency-exit-gen} is just a slight modification of the proof of Theorem \ref{thm:efficiency-exit}.  The details are left to the reader (see Appendix).
\begin{proof}[Proof of Theorem \ref{thm:general_exit_extension}]
The key arguments are similar to those used in the proof of Theorem \ref{thm:general_exit} and the structure of the proof is the same. \\
\emph{Step 1. The Poisson-Brownian model. } Let us define the following expression depending on the Brownian paths $(B_t)_{t\ge 0}$, on the first exit time of the interval $[a,b]$ denoted by $\tau_B$ and on the independent Poisson process $N$: 
\begin{align*}
I(x,f,g,\kappa)&:=\mathbb{E}_x\Big[f(\tau_{B}\wedge \kappa)g(B_{\tau_{B}\wedge \kappa}) e^{-\int_0^{\tau_{B}\wedge \kappa}\gamma(B_s)\,ds}\Big]\\
&=\mathbb{E}_x\Big[f(\tau_{B}\wedge \kappa)g(B_{\tau_{B}\wedge \kappa})1_{\{ N(A)=0 \}}\Big],
\end{align*}
with
 \[
 A:=\Big\{ (t,y)\in [0,\tau_B\wedge \kappa]\times [0,\gamma_{+}]:\ y\le \gamma(B_t) \Big\}.
 \]
 Let us introduce the following series expansion:
 \[
 I(x,f,g,\kappa)=\sum_{n\ge 1} I_n(x,f,g,\kappa),
 \]
 the definition of $I_n(x,f,g,\kappa)$ is similar to the definition appearing in the proof of Theorem \ref{thm:general_exit}, it suffices to replace in the definition $\tau_B$ by $\tau_B\wedge \kappa$. Therefore 
 \[
 I_0(x,f,g,\kappa)=\mathbb{E}_x[f(\tau_B\wedge\kappa)g(B_{\tau_B\wedge \kappa})1_{\{ \tau_B\wedge\kappa\le \xi_1 \}}],
 \]
 where $(\xi_1,U_1)$ stands for the coordinates of the Poisson process point with the smallest abscissa. Moreover we obtain the following step by step property:
 \begin{equation}\label{eq:recu1}
 I_n(x,f,g,\kappa)=\mathbb{E}_x\Big[I_{n-1}(B_{\xi_1},f(\xi_1+\cdot),g,\kappa-\xi_1)1_{\{U_1>\gamma(B_{\xi_1}),\xi_1<\tau_B\wedge \kappa\}}\Big].
 \end{equation}
\emph{Step 2. Relation between Algorithm ($\kappa$-DET) and the Brownian-Poisson theoretical model presented in Step 1.}\\ If we denote by $\mathcal{N}_0$ the number of \emph{Step 0} used during the procedure which leads to the computation of the outcome $(Z,T)$ and $\mathcal{N}_1$ the number of exponentially distributed random variables generated (\emph{Step 1}), then we obviously obtain for $\mathcal{I}_n(x,f,g,\kappa):=\mathbb{E}_x[f(T)g(Z)1_{\{\mathcal{N}_0=1,\ \mathcal{N}_1=n+1 \}}]$,
\begin{align*}
\mathcal{I}_0(x,f,g,\kappa)&=\mathbb{E}\Big[f(S)g(Y)1_{\{ S<E\wedge K,\, U\le \beta_m(Y),\, W\le e^{-\rho(\kappa-S)} \}}\Big]\\
&+\mathbb{E}\Big[ f(\kappa)g(Y_c)1_{\{ \kappa\le E\wedge S,\, U\le \beta_m(Y_c) \}} \Big]\\
&=\mathbb{E}\Big[ f(\tau_B\wedge \kappa)e^{-\rho(\kappa-\tau_B\wedge\kappa)}g(B_{\tau_B\wedge\kappa})\beta_m(B_{\tau_B\wedge\kappa}) 1_{\{\xi_1\ge \tau_B\wedge \kappa\}} \Big]\\
&= I_0(x,f\times e^{-\rho(\kappa-\cdot)},g\times\beta_m,\kappa).
\end{align*}
Using similar arguments, we can prove that $\mathcal{I}_n$ satisfies the same step by step relation as\eqref{eq:recu1}. So by identification, we get $\mathcal{I}_n(x,f,g,\kappa)=I_n(x,f\times e^{-\rho(\kappa-\cdot)}, g\times\beta_m,\kappa)$ for all $n\ge 0$ and therefore 
\begin{align*}
\mathbb{E}[f(T)g(Z)1_{\{ \mathcal{N}_0=1 \}}]&=\mathcal{I}(x,f,g,\kappa)=\sum_{n\ge 0}\mathcal{I}_n(x,f,g,\kappa)\\
&= I(x,f\times e^{-\rho(\kappa-\cdot)}, g\times\beta_m,\kappa).
\end{align*}
Since Algorithm ($\kappa$-DET) is an acceptance rejection algorithm, we have
\begin{align*}
\mathbb{E}[f(T)g(Z)]&=\frac{\mathbb{E}[f(T)g(Z)1_{\{ \mathcal{N}_0=1 \}}]}{\mathbb{P}(\mathcal{N}_0=1)}=\frac{ I(x,f\times e^{-\rho(\kappa-\cdot)}, g\times\beta_m,\kappa)}{ I(x, e^{-\rho(\kappa-\cdot)}, \beta_m,\kappa)}.
\end{align*}
The link between $\beta_m$ and $\beta$ leads to
\begin{align*}
\mathbb{E}[f(T)g(Z)]&= \frac{ I(x,f\times e^{-\rho(\kappa-\cdot)}, g\times\beta,\kappa)}{ I(x, e^{-\rho(\kappa-\cdot)}, \beta,\kappa)} \\
&=\frac{\mathbb{E}_x\Big[f(\tau_B\wedge\kappa)g(B_{\tau_B\wedge\kappa})M_{\tau_B\wedge\kappa}\Big]}{\mathbb{E}_x[M_{\tau_B\wedge\kappa}]},
\end{align*}
where $M_t$ is the exponential martingale defined in \eqref{def:martingale}. It is actually important to note that since the time interval is bounded (upper-bounded by $\kappa$) we can use the time reversal expression $e^{-\rho(\kappa-\cdot)}$ and the ratio in the previous equation verifies a cancellation of the terms $e^{-\rho\kappa}$.\\ 
Finally it suffices therefore to use the Girsanov transformation in order to obtain the announced result:
\(
\mathbb{E}[f(T)g(Z)]=\mathbb{E}_x[f(\tau_X\wedge\kappa)g(X_{\tau_X\wedge\kappa})].
\)
\end{proof}
Let us now focus our attention on the exact simulation of $(X_{\tau_{a,b}},\tau_{a,b})$ for any diffusion process with regular drift term and constant diffusion coefficient. We have already seen that Algorithm (DET) permits to reach such objective however it is restricted to drift terms satisfying $\mu'+\mu^2\ge 0$ on the interval $[a,b]$. If such a condition is not verified, we propose the following procedure: first we choose some time parameter $\kappa>0$, then we apply:
 \begin{framed}
\centerline{GENERAL DIFFUSION EXIT TIME (GDET)} 
\centerline{parameters: $\kappa$, $a$ and $b$.}
\vspace*{0.2cm}

\noindent
{\bf Initialization.} \emph{$Z=x$, $T=0$, $\mathcal{N}_{\rm it}=0$  \\[5pt]
{\bf While} $Z\notin\{a,b\}$ {\bf do:}
\begin{itemize} 
\item Simulate $(S,Y)=\kappa\mbox{{\scriptsize-DET}}(Z,\kappa)$
\item $T\leftarrow T+S$, $Z\leftarrow Y$ and $\mathcal{N}_{\rm it}\leftarrow\mathcal{N}_{\rm it}+1$.
\end{itemize}
{\bf End While}\\[5pt]
{\bf Outcome:} the couple of random variables $(Z,T)$ and the number of iterations $\mathcal{N}_{\rm it}$.}
\end{framed}
Due to the Markov property, it is obvious that the outcome of such an algorithm and $(X_{\tau_{a,b}(X)},\tau_{a,b}(X))$ are identically distributed. Moreover the number of iterations of GDET has the same distribution as $\lfloor \frac{\tau_{a,b}(X)}{\kappa}\rfloor+1$. Hence
\[
\mathbb{E}_x[\mathcal{N}_{\rm it}]\le 1+\frac{\mathbb{E}_x[\tau_{a,b}(X)]}{\kappa}.
\]
It is evident that the number of iterations decreases as $\kappa$ becomes large but we should be careful for a clever choice of $\kappa$ since the number of rejections in Algorithm ($\kappa$-DET) grows exponentially fast when $\kappa$ enlarges, see Proposition \ref{prop:efficiency-exit-gen}. A reasonable choice is therefore $\kappa\approx\rho^{-1}$. 
%
%
%
%
%
%
%
\subsection{Examples and numerics}
\label{subsec:DFET-num}
The aim of this section is to emphasize the efficiency of Algorithm (DET) and Algorithm ($\kappa$-DET) through the analysis of two examples. The first situation concerns a diffusion whose drift term $b$ satisfies the condition $\mu'+\mu^2\ge 0$ and consequently only requires the basic Algorithm (DET). The second situation concerns the Ornstein-Uhlenbeck process which plays an essential role in several applications namely in neuroscience. For both examples, we set the parameters appearing in the algorithms: $\tcond=0.7$ and $\texit=0.5$.
\subsubsection{Example with the Algorithm (DET)}
We first consider a stochastic differential equation which was already presented in \cite{Herrmann-Zucca} for the simulation of the first passage time. Here the objective is clearly different since we focus our attention to the exit time and exit position of the diffusion and the algorithm is different too. Let us also note that a similar diffusion process was also introduced in \cite{beskos2005exact}.

We consider the following stochastic differential equation: 
\begin{equation}\label{eq:ex1}
dX_t=(2+\sin(X_t))\,dt+dB_t,\quad t\ge 0, \quad X_0=0.
\end{equation}
We first observe that $\gamma(x)=(\mu^2(x)+\mu'(x))/2=((2+\sin(x))^2+\cos(x))/2$ satisfies $0\le \gamma\le 5$. We deduce that we can apply Theorem \ref{thm:general_exit} with the particular choice $\rho=0$: the outcome $(Z,T)$ of Algorithm (DET)  has therefore the same distribution as $(X_{\tau_{a,b}(X)},\tau_{a,b}(X))$. The algorithm permits to obtain the histograms of respectively the exit time (Fig. \ref{fig:ex-DET} left)
and the counter $\mathcal{N}_{\rm tot}$ illustrating the efficiency of the algorithm DET (right). These histograms use a sample of size $100\,000$ and concerns the interval $[a,b]=[-0.5,0.5]$. The average value of the counter is $8.5$ and its estimated standard deviation is $8.88$.  Let us just compare the approximated results obtained by Algorithm (DET) for the interval $[a,b]=[-0.5,0.5]$ with a classical Euler method with step size $0.0001$ and $100\,000$ samples.\\[8pt]
{\renewcommand{\arraystretch}{1.5}
\centerline{\begin{tabular}{|c||c|c|c|c|c|}
\hline
Algo. & $\mathbb{E}[\tau_{a,b}]$ & $\sigma(\tau_{a,b})$ &  $\mathbb{E}[\tau_{a,b}1_{\{X_{\tau_{a,b}}=a\}}]$ & $\mathbb{P}(X_{\tau_{a,b}}=a)$\\
\hline
\hline
Euler method & 0.18262 & 0.13796 & 0.18446 & 0.12530 \\
\hline
(DET) & 0.17927 & 0.13667 & 0.18018 &  0.12685
\\
\hline
\end{tabular}}}

\vspace*{0.2cm}
When considering larger intervals like for instance $[-1,2]$ (Fig. \ref{fig:ex-DET-1}), the counter becomes large (average: 1205) and the algorithm DET is rather time consuming (C++ progaming: CPU 1,77 sec for $100\,000$ samples of the exit from the interval $[-0.5,0.5]$ and CPU 230,8 sec for the interval $[-1,2]$). Figure \ref{fig:ex-DET-2} is an illustration of the high level of rejection for the algorithm (DET) as the interval size increases. 
%
%
\begin{figure}[!h]
\centerline{\includegraphics[width=6.5cm]{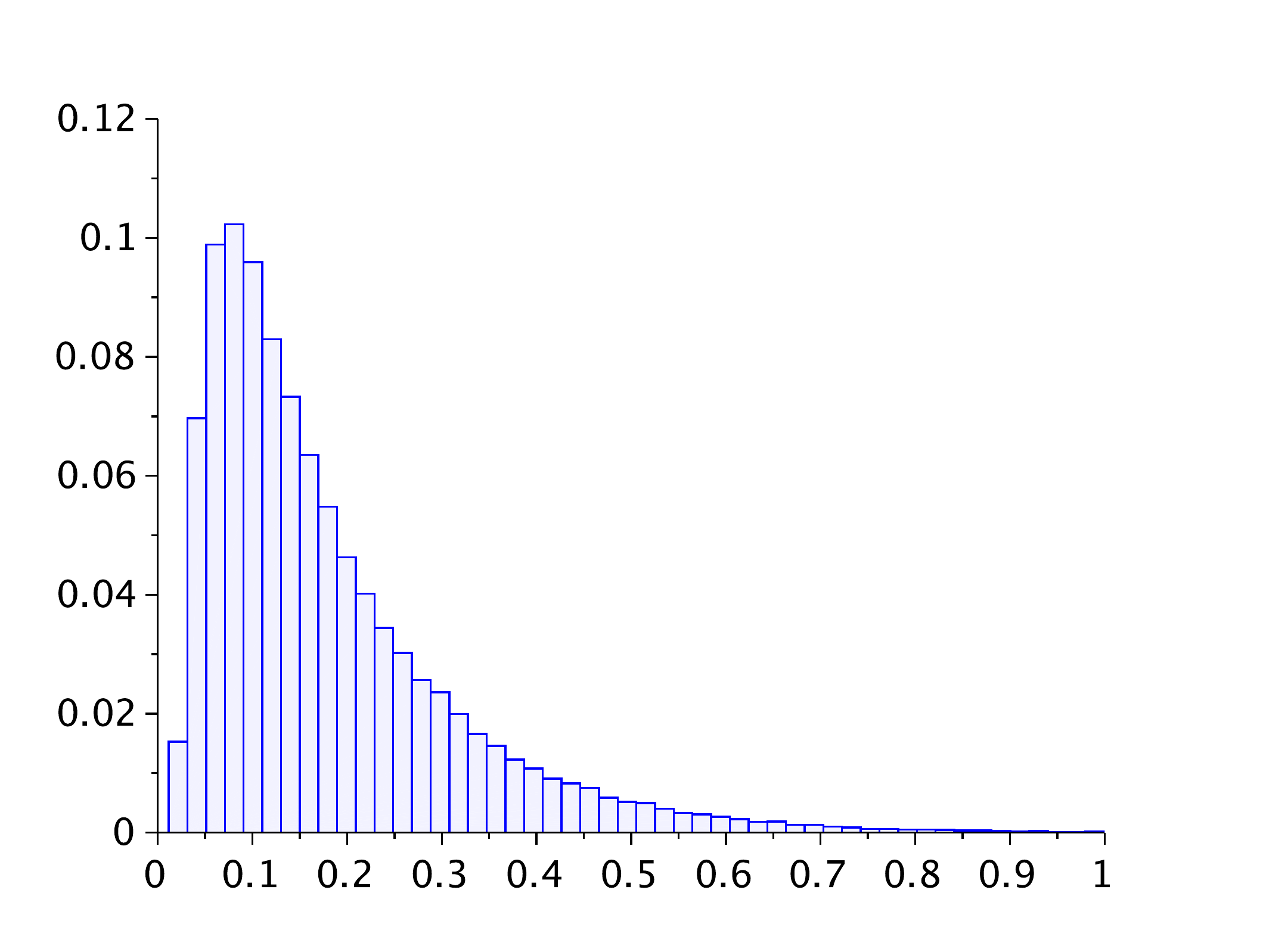}\hspace*{0.1cm}\includegraphics[width=6.5cm]{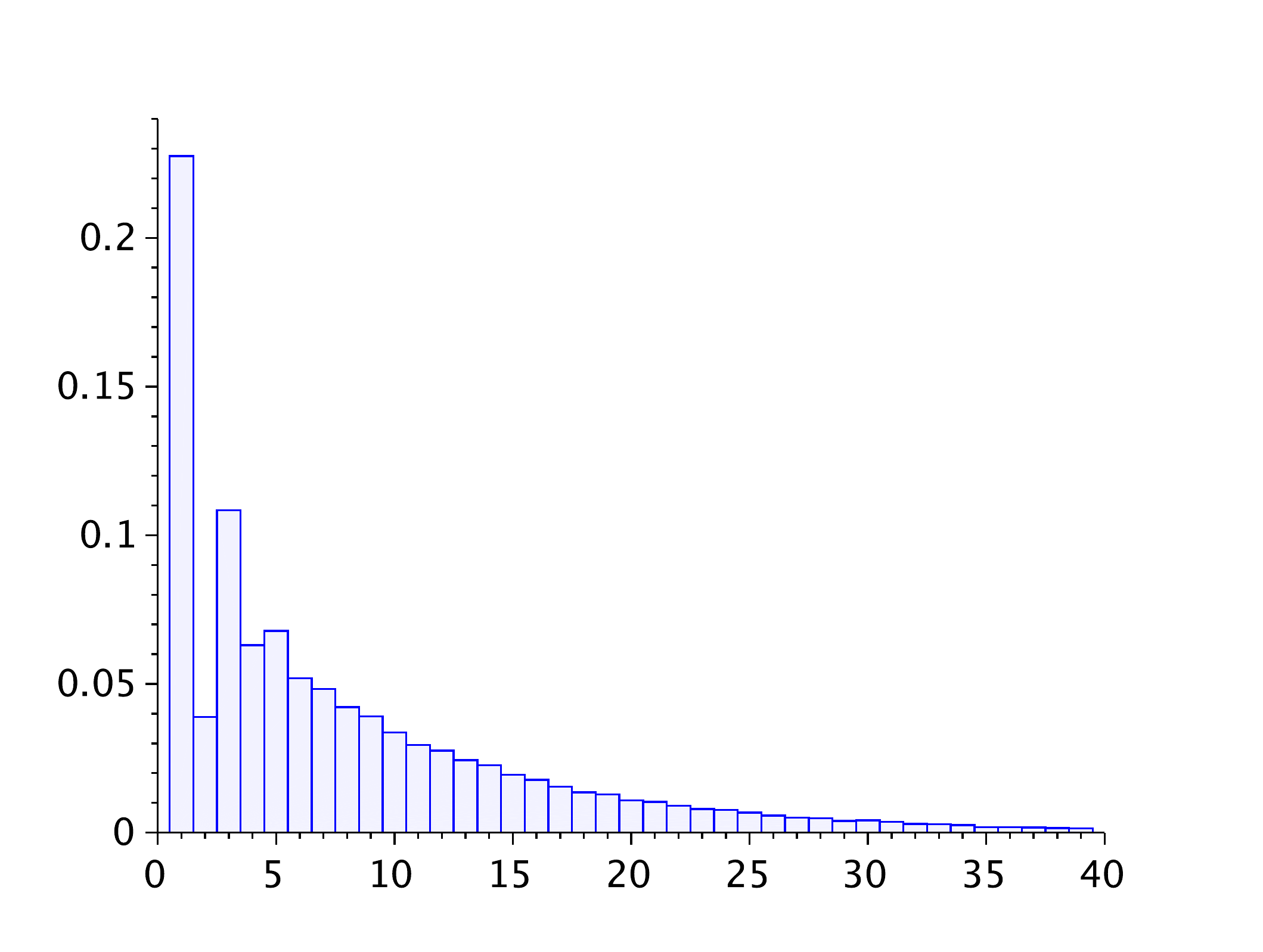}}
\caption{\small Simulation of the first exit time from the interval $[a,b]=[-1/2,1/2]$ for the diffusion \eqref{eq:ex1} ($100\,000$ samples). Histograms of the exit time variable (left) and of the counter $\mathcal{N}_{\rm tot}$ (right). }\label{fig:ex-DET}
\end{figure}
\begin{figure}[!h]
\centerline{\includegraphics[width=6cm]{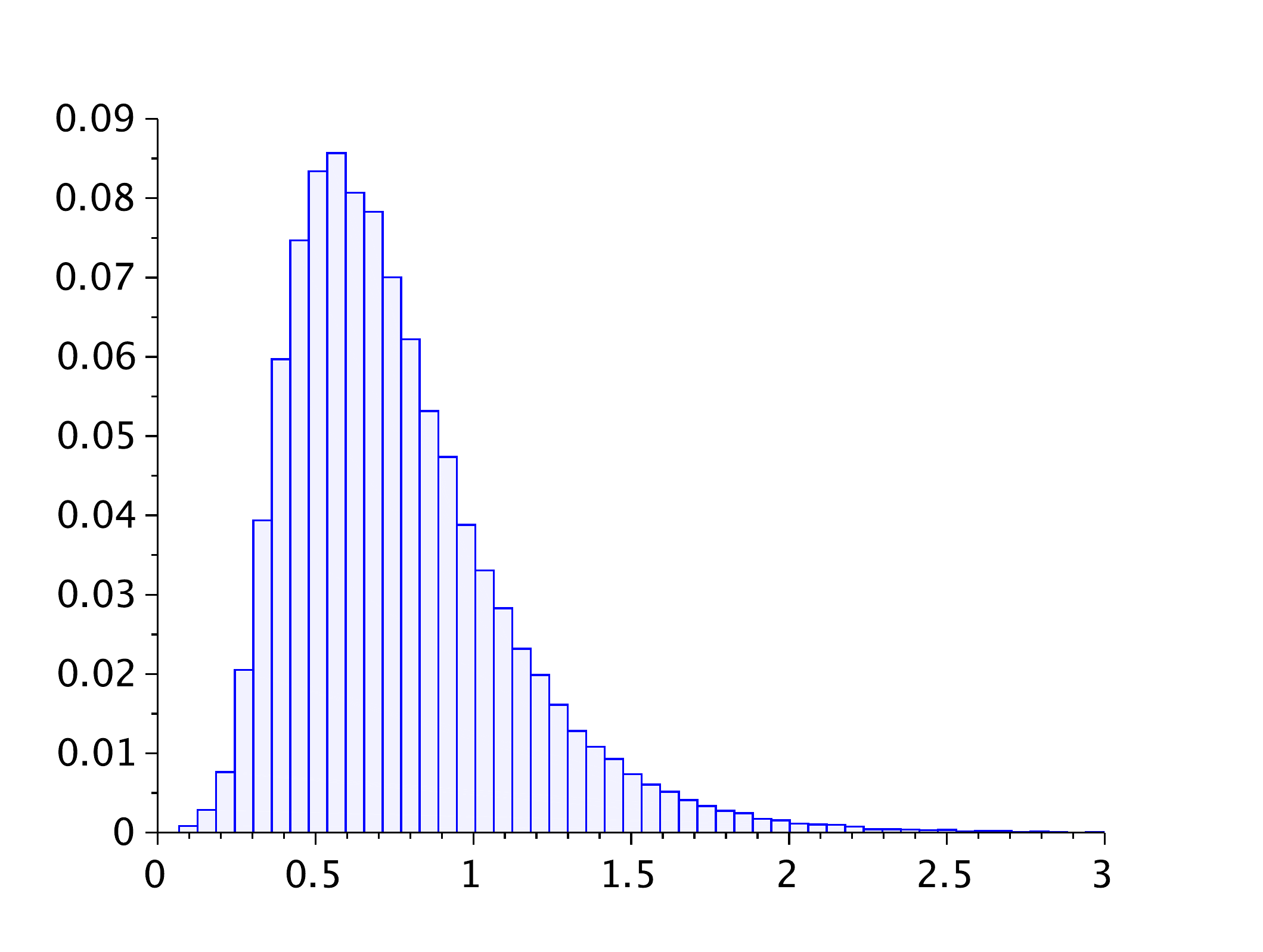}\hspace*{0.1cm}\includegraphics[width=6cm]{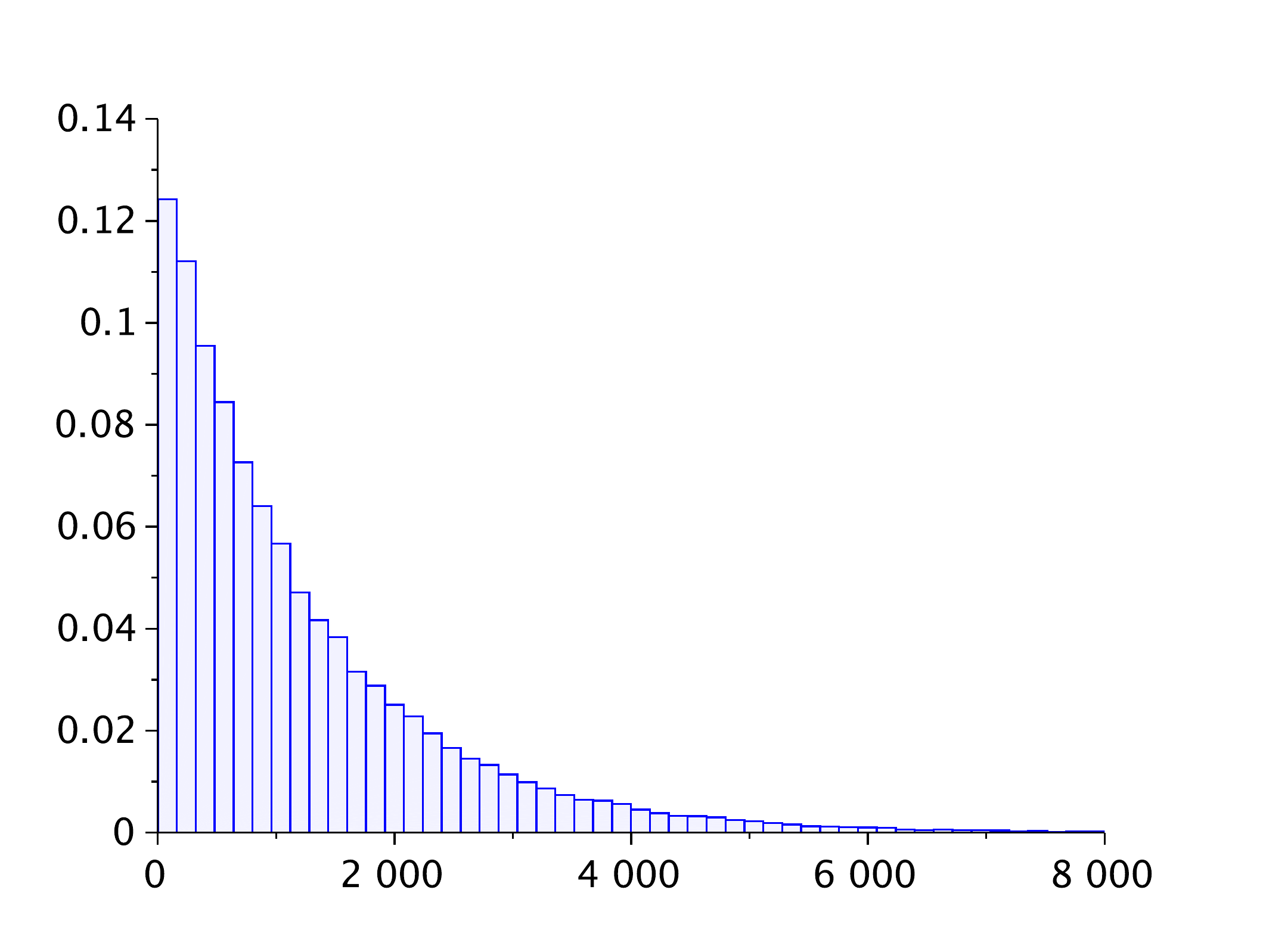}}
\caption{\small Simulation of the first exit time from the interval $[a,b]=[-1,2]$ for the diffusion \eqref{eq:ex1} ($100\,000$ samples). Histograms of the exit time variable (left) and of the counter $\mathcal{N}_{\rm tot}$ (right).}
\label{fig:ex-DET-1}
\end{figure}
\begin{figure}[!h]
\centerline{\includegraphics[width=6.5cm]{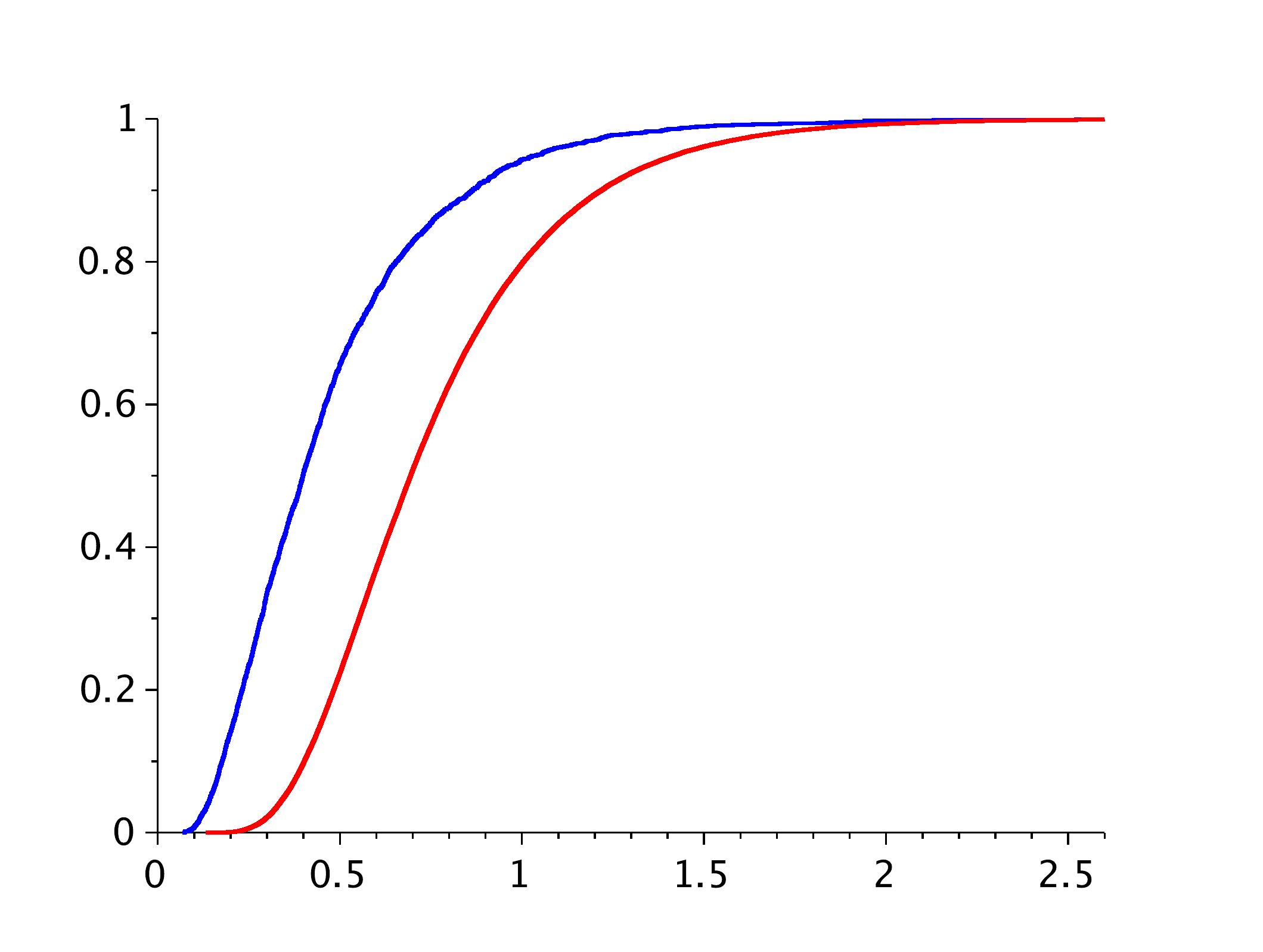}\hspace*{0.1cm}\includegraphics[width=6.5cm]{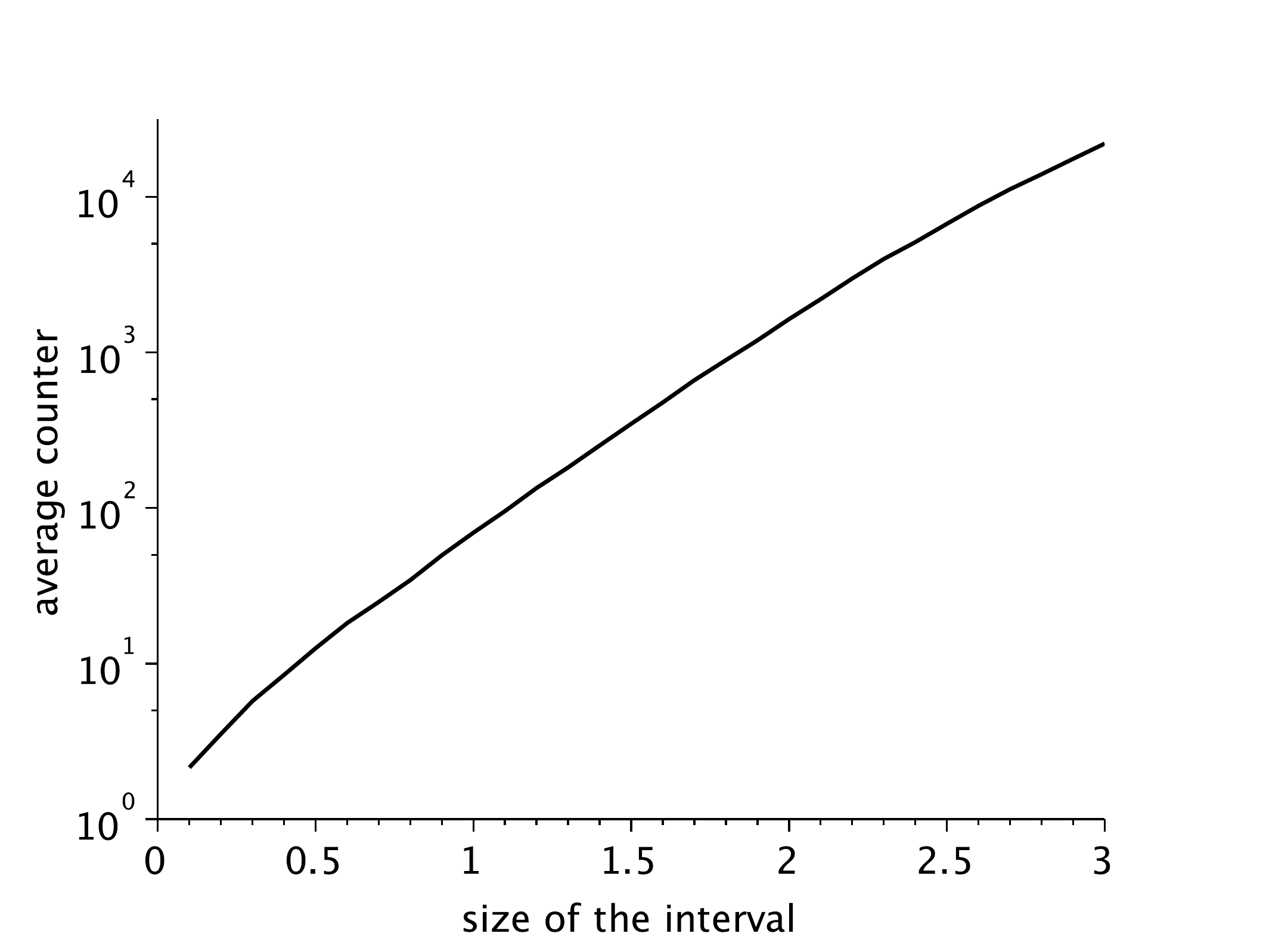}}
\caption{\small FET from the interval $[a,b]=[-1,2]$ for the diffusion \eqref{eq:ex1} ($100\,000$ samples). Empirical c.d.f. of the exit times when the exit occurs at the top -- lower curve -- or at the bottom of the interval -- upper curve -- (left). Simulation of the FET from the interval $[-a,a]$ for the diffusion \eqref{eq:ex1} ($10\,000$ samples). Average counter $\mathbb{E}[\mathcal{N}_{\rm tot}]$ in logarithmic scale versus $a$ (right).}
\label{fig:ex-DET-2}
\end{figure}

\subsubsection{The Ornstein-Uhlenbeck case.}
Let us now consider the Ornstein-Uhlenbeck process given by the following SDE
\begin{equation}
\label{eq:OU}
dX_t=-\mu_0 X_t\,dt+dB_t,\quad X_0=0.
\end{equation}
We shall determine the distribution of the exit time and location from the interval $[-a,a]$ with $a>0$. The drift term is defined by $\mu(x)=-\mu_0 x$,  we can therefore find $\rho$ such that \[\gamma(x):=\frac{\mu^2(x)+\mu'(x)}{2}+\rho=\frac{\mu_0^2x^2-\mu_0}{2}+\rho\ge 0.\] It suffices to choose $\rho=\frac{\mu_0}{2}$. Such a choice implies
\[
\sup_{x\in[-a,a]}\gamma(x)\le \frac{\mu_0^2a^2}{2}.
\]
Since the coefficient $\rho$ is strictly positive (for positive $\mu_0>0$), we cannot use the (DET)-algorithm in order to simulate the exit time. We shall therefore use the ($\kappa$-DET) algorithm in order to simulate exactly the couple $(X_{\tau_{-a,a}(X)\wedge \kappa},\tau_{-a,a}(X)\wedge \kappa)$ for any given constant time $\kappa>0$. Fig. \ref{fig:ex-KDET-1} represents the distribution of $X_{\tau_{-a,a}(X)\wedge \kappa}$ and Fig. \ref{fig:ex-KDET-2} illustrates the time distribution and describes the counter values $\mathcal{N}_{\rm tot}$. 
\begin{figure}[!h]
\centerline{\includegraphics[width=6.5cm]{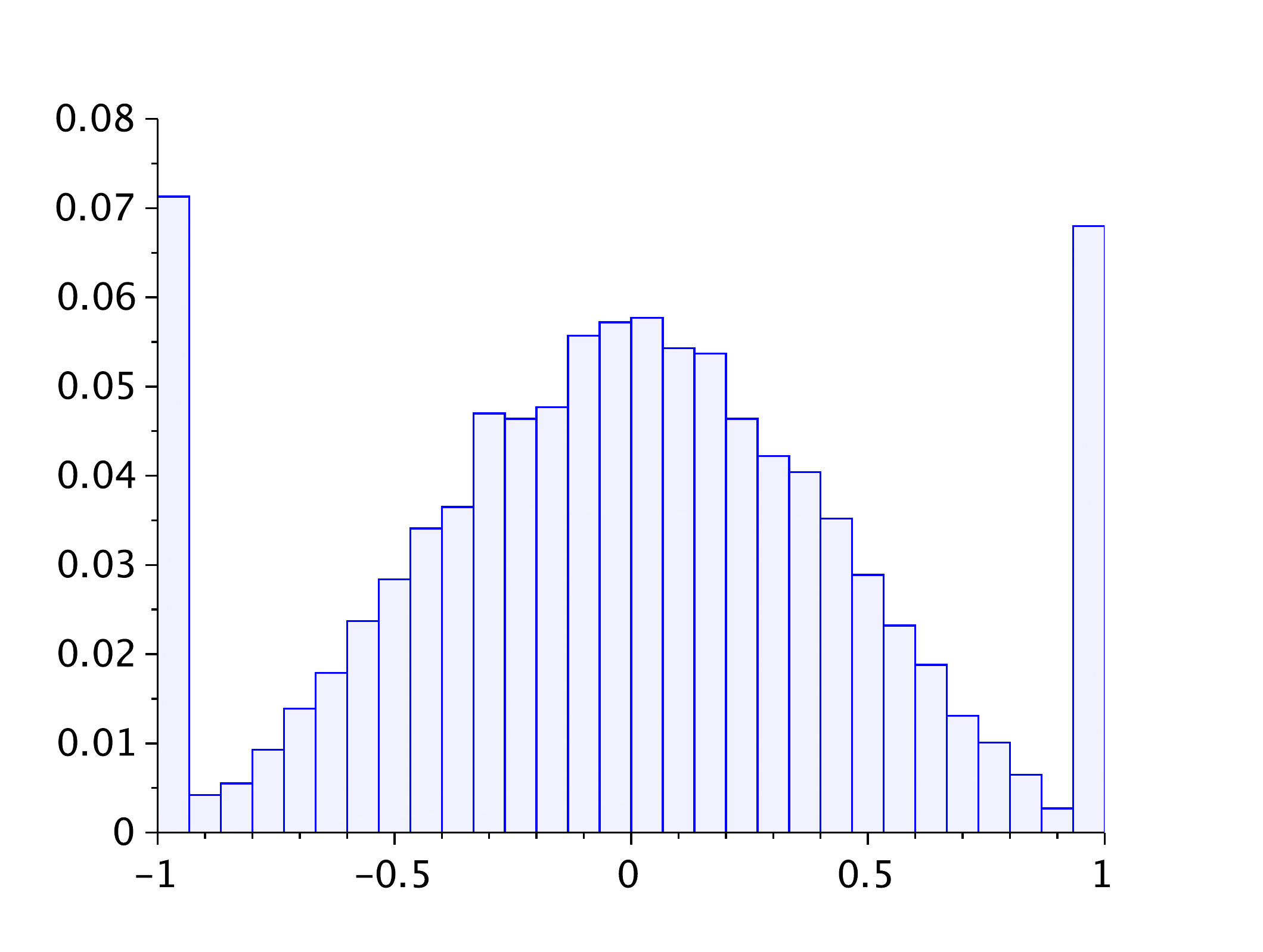}\hspace*{0.2cm}\includegraphics[width=6.5cm]{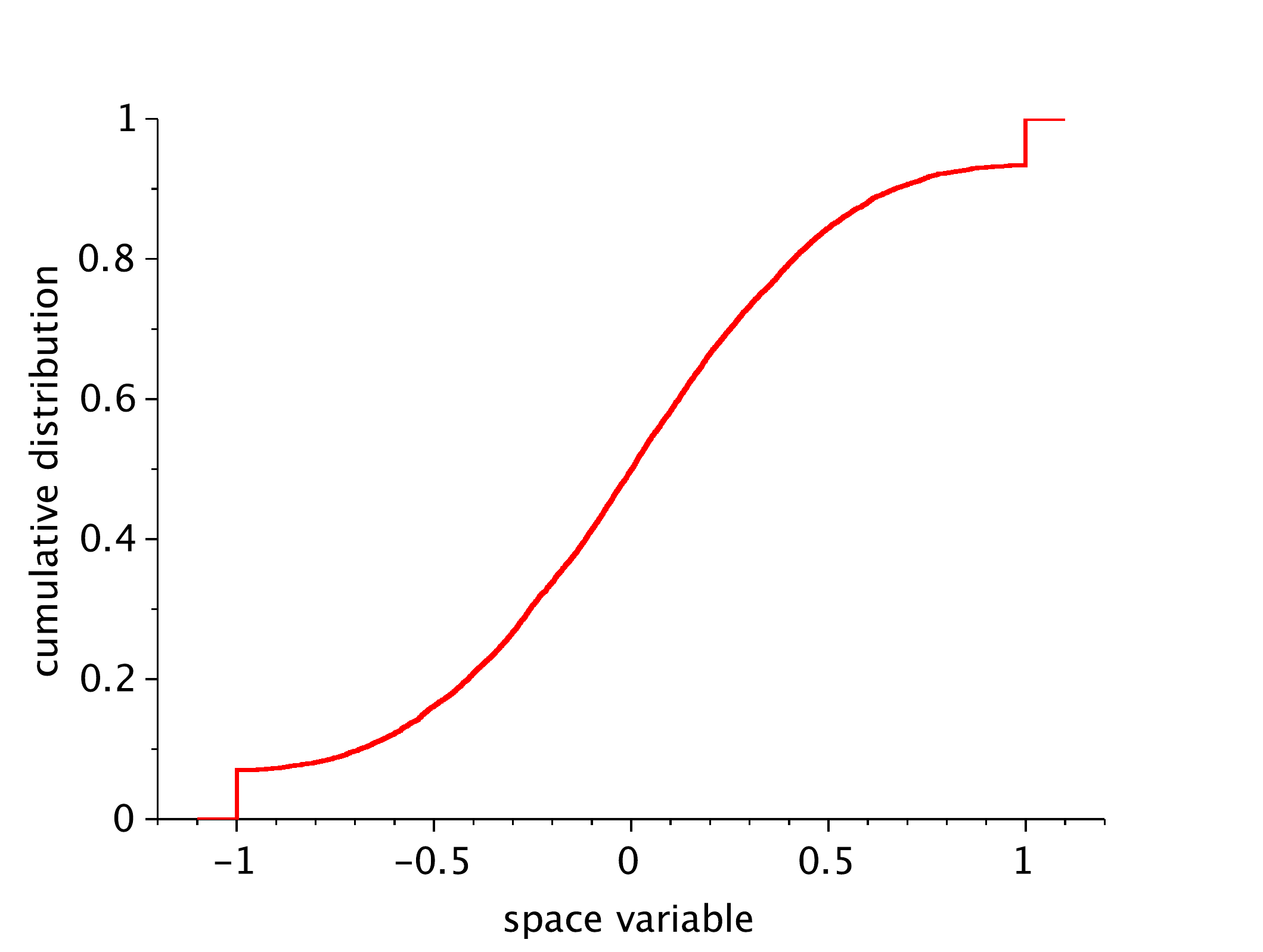}}
\caption{\small Distribution of $X_{\tau_{-a,a}(X)\wedge \kappa}$ with a sample of size $10\,000$, for the drift parameter $\mu_0=2$, the interval size $a=1$ and the constant time upper-bound $\kappa=0.5$: histogram (left) and cumulative distribution (right).}
\label{fig:ex-KDET-1}
\end{figure}
\begin{figure}[!h]
\centerline{\includegraphics[width=6.5cm]{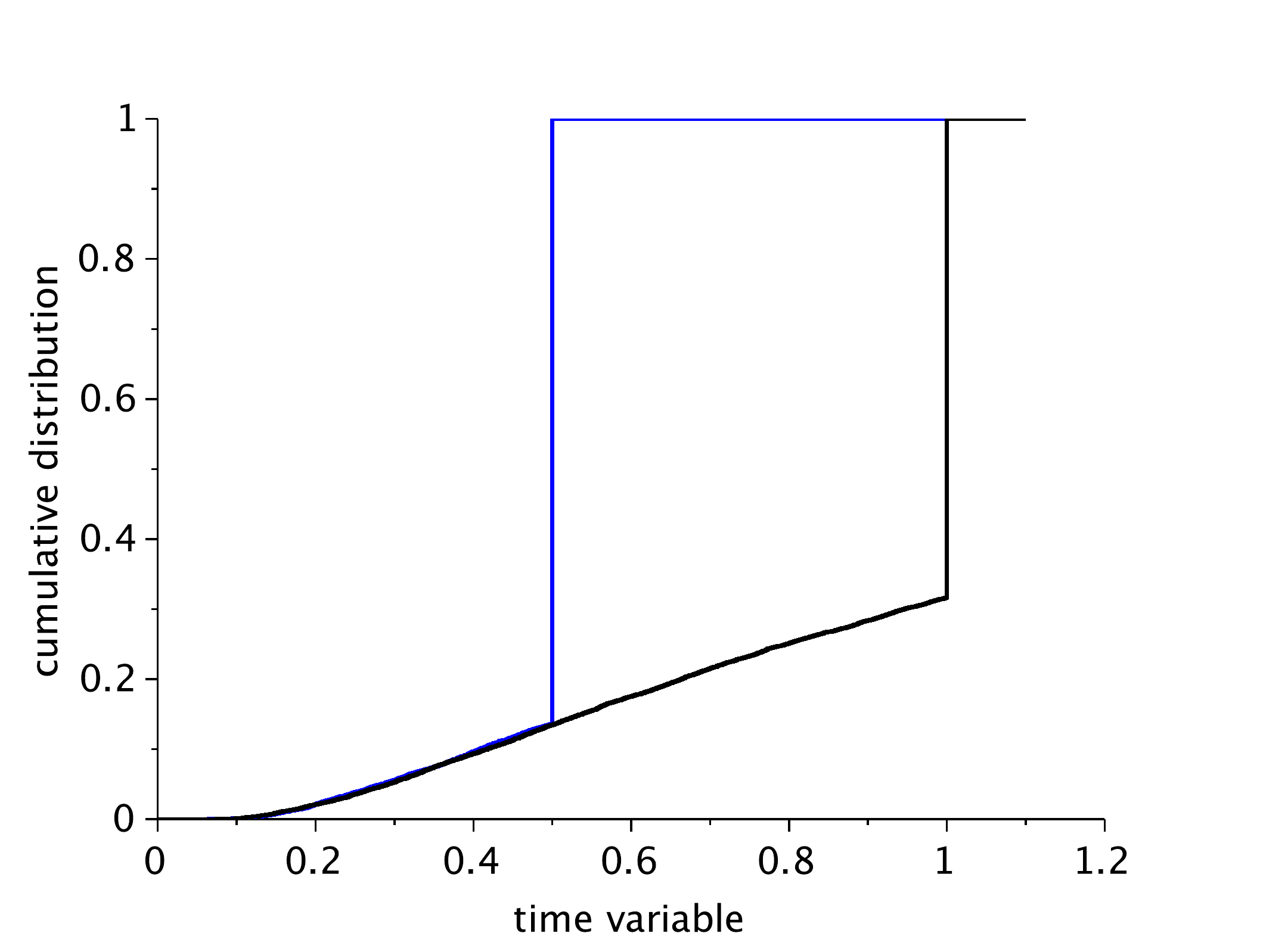}\hspace*{0.1cm}\includegraphics[width=6.5cm]{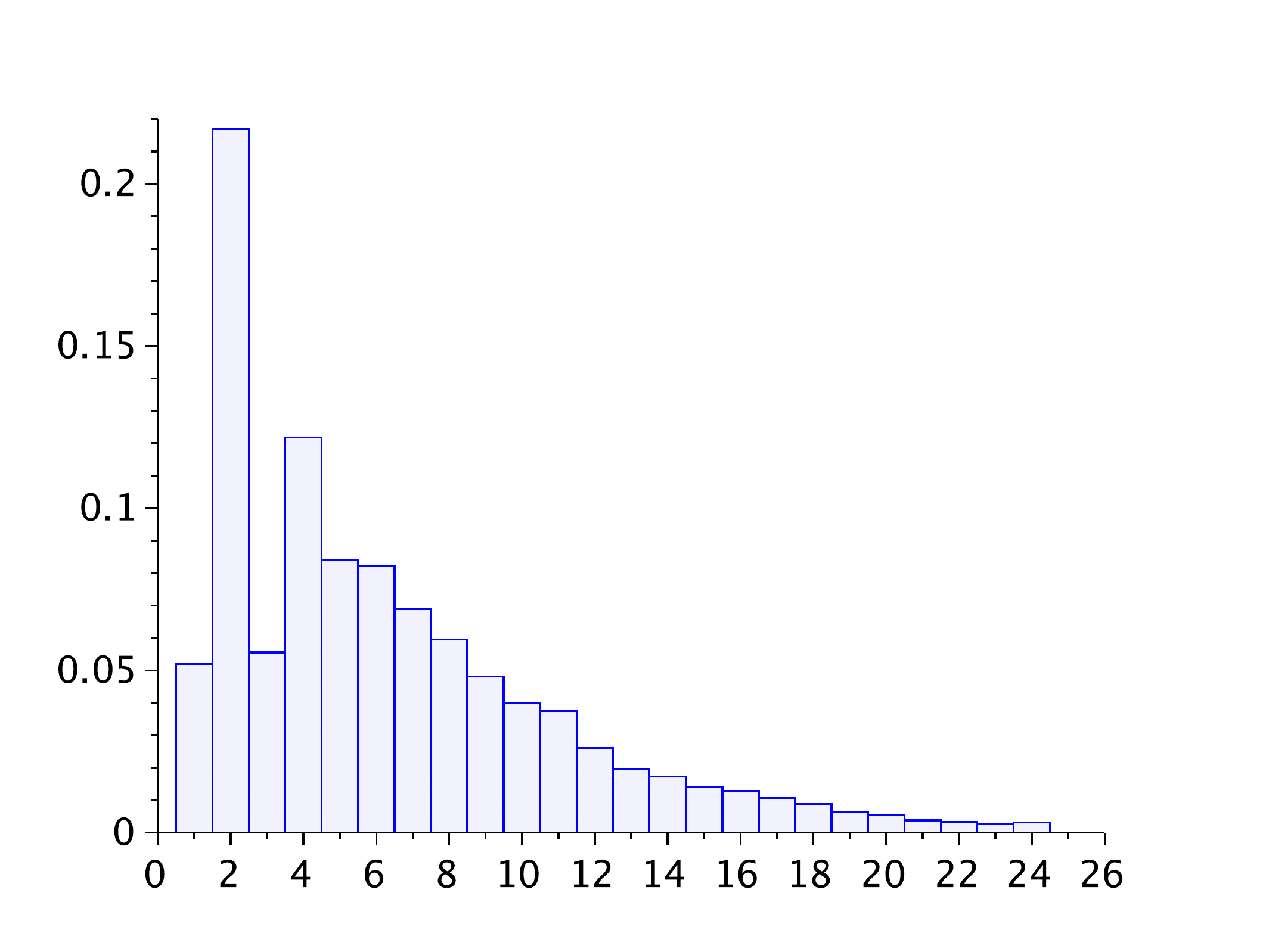}}
\caption{\small Cumulative distribution of $\tau_{-a,a}(X)\wedge \kappa$ with a sample of size $10\,000$, for the drift parameter $\mu_0=2$, the interval size $a=1$ and the constant time upper-bound $\kappa=0.5$ (left, blue curve) or $\kappa=1$ (left, black curve) and the corresponding histogram of the counter $\mathcal{N}_{\rm tot}$ for $\kappa=0.5$ (right).}
\label{fig:ex-KDET-2}
\end{figure}
In order to simulate exactly the exit time for the Ornstein-Uhlenbeck process, we use the (GDET)-algorithm. The histogram in Figure \ref{fig:ex-GDET-1} emphasizes the distribution of the exit time for the particular case: $\mu_0=2$ and $a=1$ and the efficiency of the algorithm for such a simulation. We can easily observe that the exit time increases as the parameter $\mu_0$ increases, see Fig. \ref{fig:ex-GDET-2}. 
\begin{figure}[!h]
\centerline{\includegraphics[width=6.5cm]{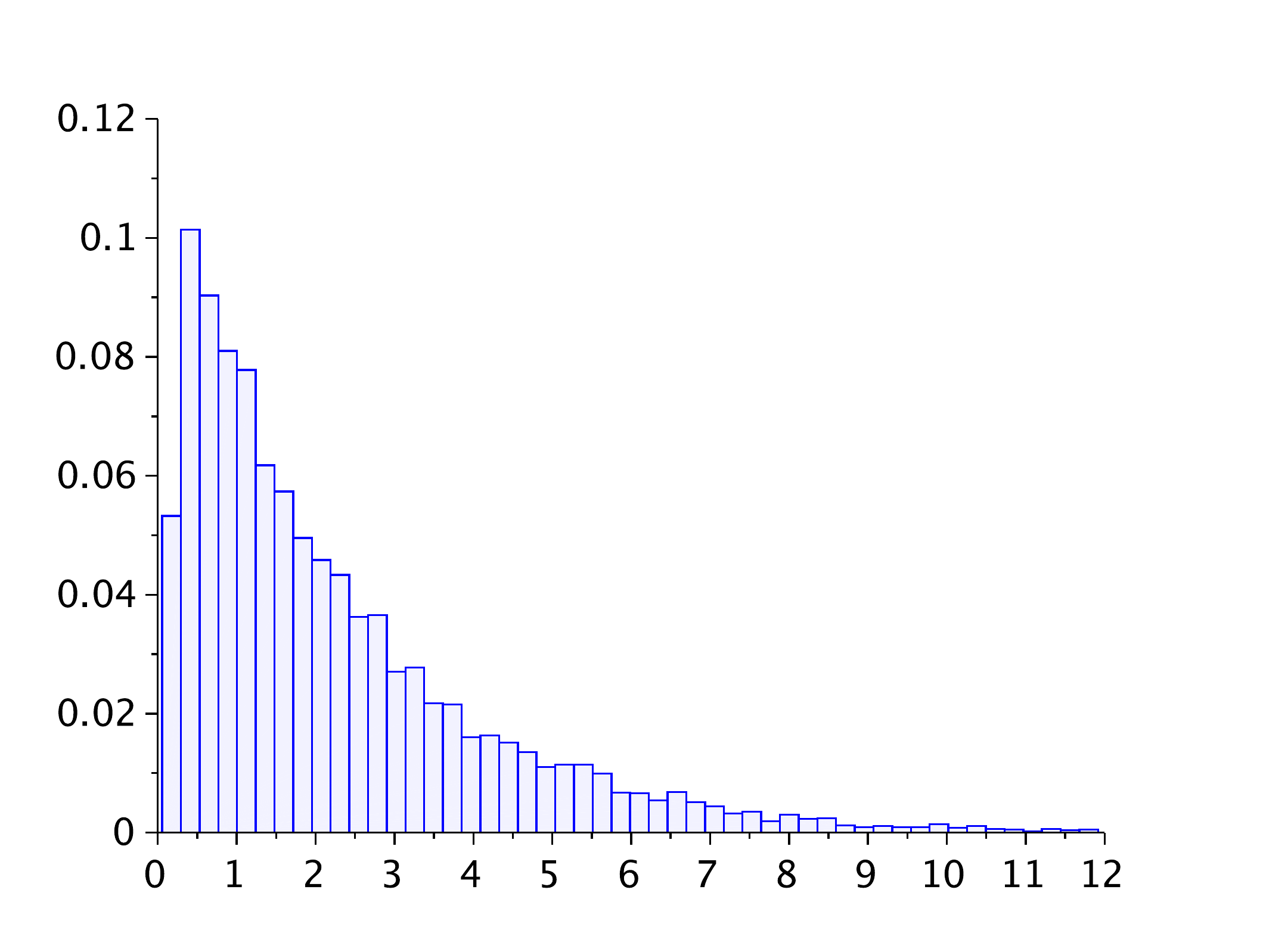}\hspace*{0.1cm}\includegraphics[width=6.5cm]{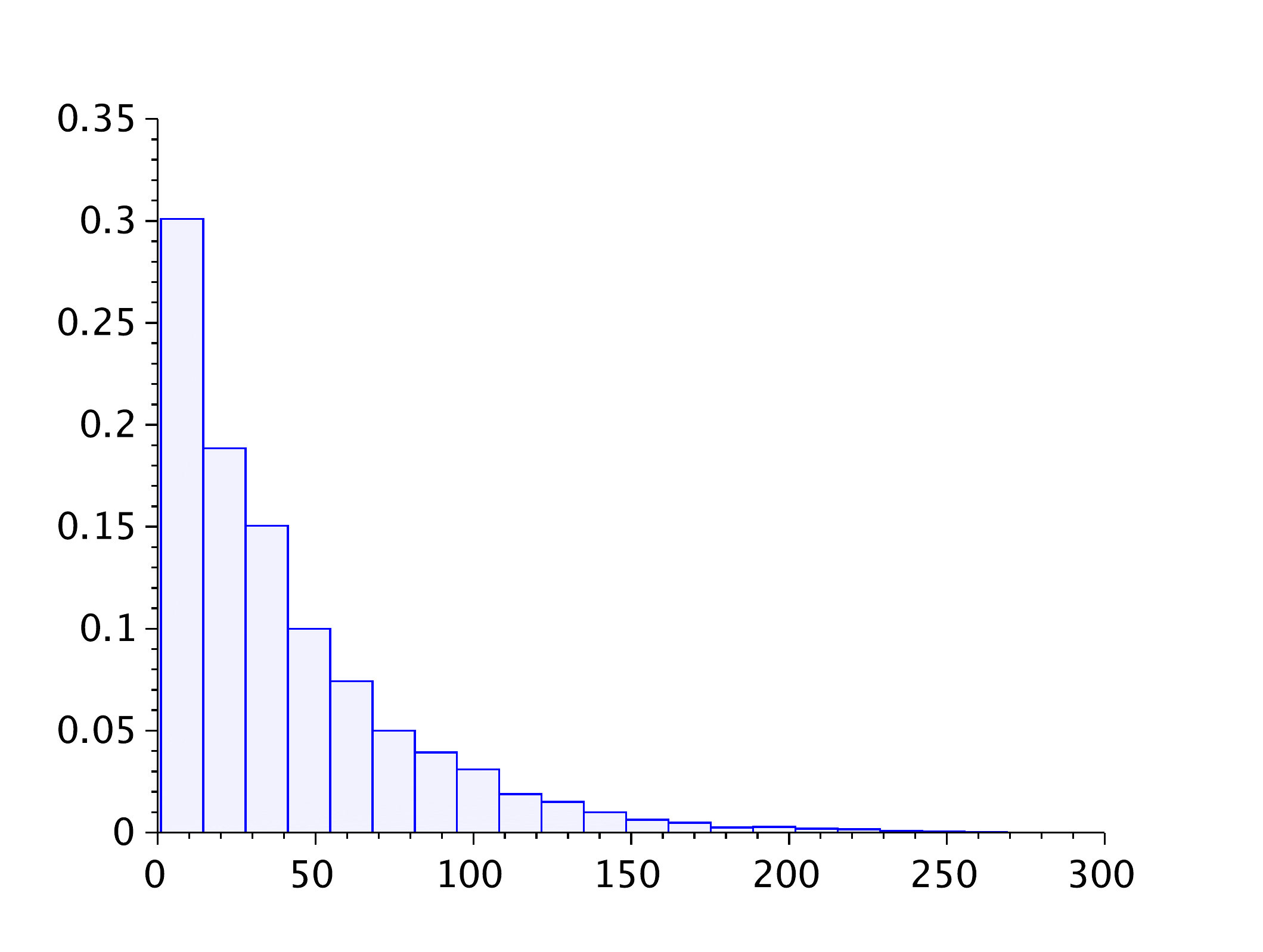}}
\caption{\small Histogram of the exit time distribution for the Ornstein-Uhlenbeck process with parameter $\mu_0=2$ and interval $[-1,1]$ (left), histogram of the associated counter $\mathcal{N}_{\rm tot}$ (right).}
\label{fig:ex-GDET-1}
\end{figure}
\begin{figure}[!h]
\centerline{\includegraphics[width=6.5cm]{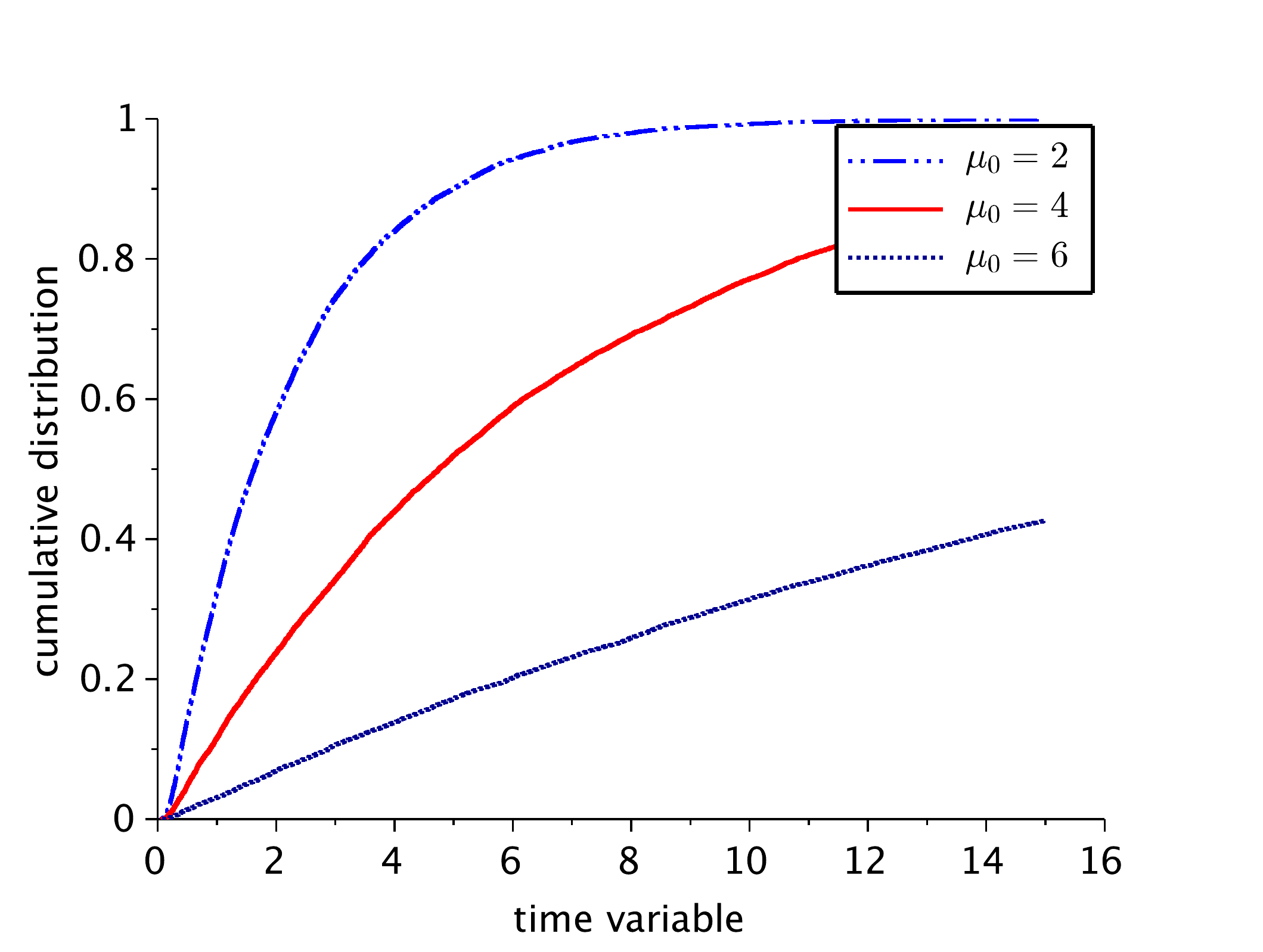}
}
\caption{\small Cumulative distribution on the time interval $[0,15]$ for the O.-U.- exit time from the interval $[-1,1]$}
\label{fig:ex-GDET-2}
\end{figure}
\begin{figure}[!h]
\centerline{\includegraphics[width=6.5cm]{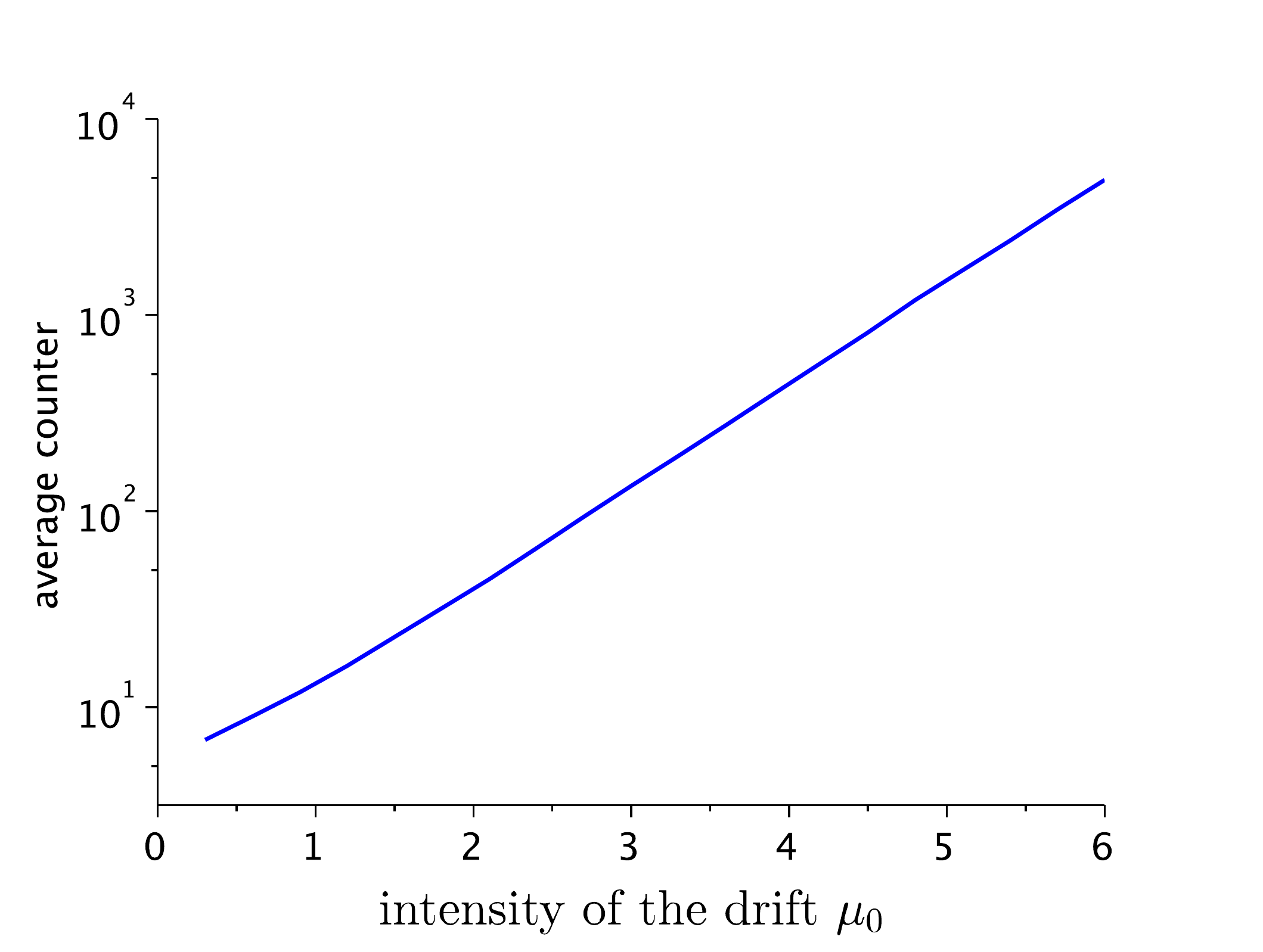}\hspace*{0.2cm}\includegraphics[width=6.5cm]{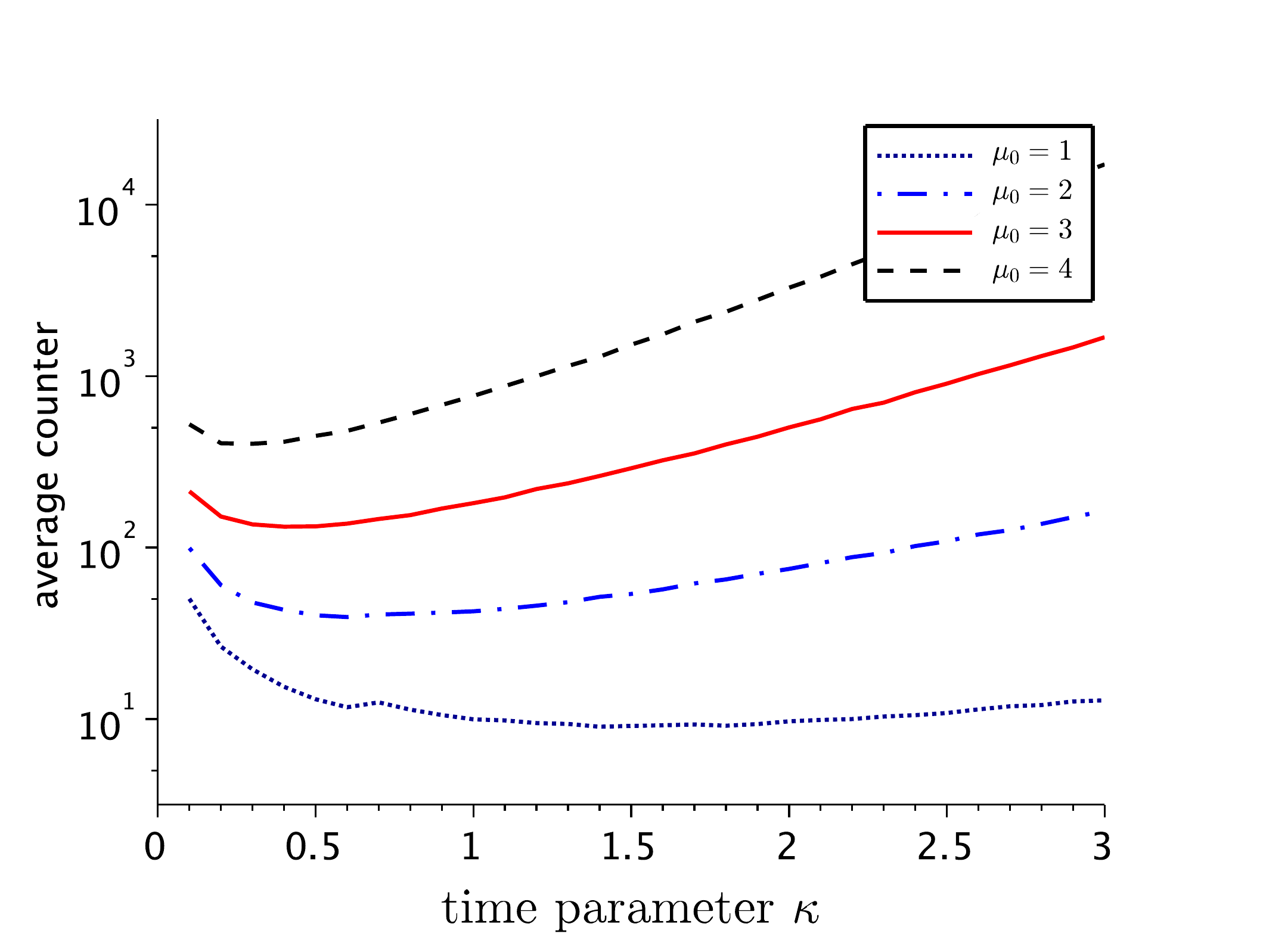}}
\caption{\small Average counter for the (GDET)-algorithm versus the intensity of the drift ($\mu_0$ varies between $0.3$ and $6$) in logarithmic scale (time constant: $\kappa=0.5$, sample size: $10\,000$) (left), average counter versus the time parameter $\kappa$ used in the (GDET)-algorithm (right).}
\label{fig:ex-GDET-3}
\end{figure}

\clearpage 
\noindent
{\bf Acknowledgment:} We thank Antoine Lejay for helpful discussions about the Brownian exit times and their simulation. We particularly appreciated his working paper \cite{lejay}.

%
%
%
%
%
%
%
\section*{Appendix}
\begin{proof}[Proof of Proposition \ref{prop:efficiency-exit-gen}]
The counter $\mathcal{N}_{\rm tot}$ depends on the parameter $\kappa$. So we set $\mathcal{N}_{\rm tot}=\mathcal{N}_{\kappa,{\rm tot}}$. This number can be decomposed as follows: 
\[
\mathcal{N}_{\kappa,{\rm tot}}^x=\sum_{k\ge 1}\mathcal{N}_{\kappa,{\rm tot}}^{x,k},
\] 
where $\mathcal{N}_{\kappa,{\rm tot}}^{x,k}$ represents the number of counter increases observed in-between the $k$-th and $(k+1)$-th passage through the item \emph{Step 0}. Since Algorithm ($\kappa$-DET) is an acceptance-rejection algorithm, the random variable $\mathcal{N}_0$ is geometrically distributed. Let us also note that $\mathcal{N}_{\kappa,{\rm tot}}^{x,k}=0$ a.s. on the event $\{\mathcal{N}_0<k\}$ and conditionally to $\{\mathcal{N}_0\ge k\} $, $\mathcal{N}_{\kappa,{\rm tot}}^{x,k}$ has the same distribution as $\mathcal{N}_{\kappa,{\rm tot}}^{x,1}$.
Hence
\begin{align}\label{eq:geom-bound-appen}
\mathbb{E}[\mathcal{N}_{\kappa,{\rm tot}}^x]&=\frac{\mathbb{E}[\mathcal{N}_{\kappa,{\rm tot}}^{x,1}]}{\mathbb{P}(\mathcal{N}_0=1)}=e^{\rho\kappa}\,\frac{\mathbb{E}[\mathcal{N}_{\kappa,{\rm tot}}^{x,1}]}{\beta_m(x)},
\end{align}
since $\mathbb{P}(\mathcal{N}_0=1)=I(x,e^{-\rho(\kappa-\cdot)},\beta_m,\kappa)=e^{-\rho\kappa}\beta_m(x)\mathbb{E}_x[M_{\tau_B\wedge \kappa}]=e^{-\rho\kappa}\beta_m(x)$, the martingale $(M_t)$ being defined in \eqref{def:martingale}. 

Let us now describe $\mathbb{E}[\mathcal{N}_{\kappa,{\rm tot}}^{x,1}]$.  Let $(S,Y,\mathcal{N}_{\rm as})$ stands for the result of the first use of the function  \mbox{\scriptsize BROWNIAN\_EXIT\_ASYMM\,} and $(Y_c^t,\mathcal{N}_c^t)$ of the first use of \mbox{\scriptsize CONDITIONAL\_DISTR\,} at time $t$, we can therefore distinguish three different cases. 
\begin{itemize}
\item if $S<E\wedge \kappa$ then $\mathcal{N}^{x,1}_{\kappa,{\rm tot}}=\mathcal{N}_{\rm as}$.
\item if $\kappa<E\wedge S$ then $\mathcal{N}^{x,1}_{\kappa, {\rm tot}}=\mathcal{N}_{\rm as}+\mathcal{N}_c^\kappa$.
\item if $E<S\wedge \kappa$ then $\mathcal{N}^{x,1}_{\kappa, {\rm tot}}=\mathcal{N}_{\rm as}+\mathcal{N}_c^E+\widehat{\mathcal{N}}^{Y_c^E,1}_{\kappa-E, {\rm tot}}1_{\{ \gamma_0V>\gamma(Y_c^E) \}}$ where $\widehat{\mathcal{N}}^{x,1}_{\kappa, {\rm tot}}$ is an independent copy of $\mathcal{N}^{x,1}_{\kappa, {\rm tot}}$. 
\end{itemize}
We deduce
\begin{align}\label{eq:opt-appen}
\mathbb{E}[ \mathcal{N}^{x,1}_{\kappa, {\rm tot}} ]&\le \mathbb{E}[ \mathcal{N}_{\rm as} ]+\mathbb{E}[ \mathcal{N}_{c}^\kappa1_{\{S\wedge E>\kappa\}} ]+\mathbb{E}[ \mathcal{N}_{c}^E1_{\{S\wedge \kappa>E\}} ]\nonumber\\
&+\mathbb{E}[\widehat{\mathcal{N}}^{Y_c^E,1}_{\kappa-E,{\rm tot}}1_{\{S\wedge \kappa>E\}}1_{\{ \gamma_0V>\gamma(Y_c^E) \}}]\nonumber\\
&\le \mathbb{E}[ \mathcal{N}_{\rm as} ]+\mathbb{E}[ \mathcal{N}_{c}^\kappa 1_{\{S>\kappa\}} ]\mathbb{P}(E>\kappa)+\mathbb{E}[ \mathcal{N}_{c}^E1_{\{S\wedge \kappa>E\}} ]\nonumber\\
&+\mathbb{E}[\widehat{\mathcal{N}}^{Y_c^E,1}_{\kappa-E,{\rm tot}}1_{\{S\wedge\kappa>E\}}]
\end{align}
Using the same arguments as those developed in the proof of Theorem \ref{thm:efficiency-exit}, we get the existence of two constants $C_0(\texit )>0$ and $C_1(\tcond )>0$ such that $\mathbb{E}[ \mathcal{N}_{\rm as} ]\le C_0(\texit )$ and $\mathbb{E}[\mathcal{N}_c^t1_{\{S>t\}}]\le C_1(\tcond )$ for all $t\ge 0$. We  then deduce
\begin{align*}
\mathbb{E}[ \mathcal{N}^{x,1}_{\kappa, {\rm tot}} ]&\le
C_0(\texit )+C_1(\tcond ) +\sup_{y\in[a,b]}\mathbb{E}[\widehat{\mathcal{N}}^{y,1}_{\kappa-E,{\rm tot}}]\mathbb{P}_{(a+b)/2}(S\wedge\kappa>E).
\end{align*}
This result can be generalized for any $K\le \kappa$ and therefore we obtain
\begin{align}\label{eq:append1}
\mathbb{P}_{(a+b)/2}(S\wedge\kappa\le E)\sup_{x\in[a,b],\ K\le \kappa}\mathbb{E}[ \mathcal{N}^{x,1}_{K, {\rm tot}} ]&\le
C_0(\texit )+C_1(\tcond ).
\end{align}
Let us note that $\mathbb{P}_{(a+b)/2}(S\wedge\kappa\le E)=\mathbb{P}_{(a+b)/2}(\tau_{a,b}(B)\wedge \kappa\le\xi)$ where $\tau_{a,b}(B)$ is the exit time of the Brownian motion from the interval $[a,b]$ and $\xi$ is exponentially distributed with parameter $\gamma_+$. Using the scaling property \eqref{eq:scaling}, we have
\begin{equation}\label{eq:append2}
\mathbb{P}_{(a+b)/2}(\tau_{a,b}(B)\wedge \kappa\le\xi)=\mathbb{E}_{(a+b)/2}[e^{-\gamma_+(\tau_{a,b}(B)\wedge \kappa)}].
\end{equation}
Combining  \eqref{eq:append1} \eqref{eq:append2} and \eqref{eq:geom-bound-appen}, we obtain
\begin{align*}
\mathbb{E}[\mathcal{N}_{\kappa,{\rm tot}}^x]&\le e^{\rho\kappa}\,\frac{\mathbb{E}[\mathcal{N}_{\kappa,{\rm tot}}^{x,1}]}{\beta_m(x)}\le e^{\rho\kappa}\,\frac{C_0(\texit)+C_1(\tcond)}{\beta_m(x)\ \mathbb{E}_{(a+b)/2}[e^{-\gamma_+(\tau_{a,b}(B)\wedge \kappa)}] }.
\end{align*}
\end{proof}

\begin{thebibliography}{10}

\bibitem{Alili}
L.~Alili, P.~Patie, and J.~L. Pedersen.
\newblock Representations of the first hitting time density of an
  {O}rnstein-{U}hlenbeck process.
\newblock {\em Stoch. Models}, 21(4):967--980, 2005.

\bibitem{Bass}
R.~F. Bass.
\newblock {\em Diffusions and elliptic operators}.
\newblock Probability and its Applications (New York). Springer-Verlag, New
  York, 1998.

\bibitem{Beskos-2006}
A.~Beskos, O.~Papaspiliopoulos, and G.O. Roberts.
\newblock Retrospective exact simulation of diffusion sample paths with
  applications.
\newblock {\em Bernoulli}, 12(6):1077--1098, 2006.

\bibitem{Beskos-2008}
A.~Beskos, O.~Papaspiliopoulos, and G.O. Roberts.
\newblock A factorisation of diffusion measure and finite sample path
  constructions.
\newblock {\em Methodol. Comput. Appl. Probab.}, 10(1):85--104, 2008.

\bibitem{beskos2005exact}
A.~Beskos and G.O. Roberts.
\newblock Exact simulation of diffusions.
\newblock {\em The Annals of Applied Probability}, 15(4):2422--2444, 2005.

\bibitem{Borodin}
A.~N. Borodin and P. Salminen.
\newblock {\em Handbook of {B}rownian motion---facts and formulae}.
\newblock Probability and its Applications. Birkh\"{a}user Verlag, Basel,
  second edition, 2002.

\bibitem{Broadie-Glasserman-Kou-1997}
M.~Broadie, P.~Glasserman, and S.~Kou.
\newblock A continuity correction for discrete barrier options.
\newblock {\em Math. Finance}, 7(4):325--349, 1997.

\bibitem{Cox-Miller}
D.~R. Cox and H.~D. Miller.
\newblock {\em The theory of stochastic processes}.
\newblock John Wiley \& Sons, Inc., New York, 1965.

\bibitem{Darling}
D.~A. Darling and A.~J.~F. Siegert.
\newblock The first passage problem for a continuous {M}arkov process.
\newblock {\em Ann. Math. Statistics}, 24:624--639, 1953.

\bibitem{Devroye-1986}
L.~Devroye.
\newblock {\em Nonuniform random variate generation}.
\newblock Springer-Verlag, New York, 1986.

\bibitem{Gobet-2000}
E.~Gobet.
\newblock Weak approximation of killed diffusion using {E}uler schemes.
\newblock {\em Stochastic Process. Appl.}, 87(2):167--197, 2000.

\bibitem{Gobet-Menozzi-10}
E.~Gobet and S.~Menozzi.
\newblock Stopped diffusion processes: boundary corrections and overshoot.
\newblock {\em Stochastic Process. Appl.}, 120(2):130--162, 2010.

\bibitem{Herrmann-Zucca}
S. {Herrmann} and C. {Zucca}.
\newblock {Exact simulation of the first-passage time of diffusions}.
\newblock {\em Journal of Scientific Computing}, Jan 2019.\\
\newblock {https://doi.org/10.1007/s10915-018-00900-3}

\bibitem{Ito-McKean}
K. It\^{o} and H.~P. McKean,
\newblock {\em Diffusion processes and their sample paths}.
\newblock Springer-Verlag, Berlin-New York, 1974.
\newblock Second printing, corrected, Die Grundlehren der mathematischen
  Wissenschaften, Band 125.

\bibitem{Jenkins}
P.~A. {Jenkins}.
\newblock {Exact simulation of the sample paths of a diffusion with a finite
  entrance boundary}.
\newblock {\em ArXiv e-prints}, November 2013.

\bibitem{K-S}
I.~Karatzas and S.E. Shreve.
\newblock {\em Brownian motion and stochastic calculus}, volume 113 of {\em
  Graduate Texts in Mathematics}.
\newblock Springer-Verlag, New York, second edition, 1991.

\bibitem{lejay}
A. Lejay.
\newblock {exitbm: a library for simulating Brownian motion's exit times and
  positions from simple domains}.
\newblock Technical Report INRIA RR-7523, 2011.

\bibitem{Milstein-Tretyakov}
G.~N. Milstein and M.~V. Tretyakov.
\newblock Simulation of a space-time bounded diffusion.
\newblock {\em Ann. Appl. Probab.}, 9(3):732--779, 1999.

\bibitem{Sacerdote-2014}
L. Sacerdote, O. Telve, and C. Zucca.
\newblock Joint densities of first hitting times of a diffusion process through
  two time-dependent boundaries.
\newblock {\em Adv. in Appl. Probab.}, 46(1):186--202, 2014.

\bibitem{Whittaker}
E.~T. Whittaker and G.~N. Watson.
\newblock {\em A course of modern analysis}.
\newblock Cambridge Mathematical Library. Cambridge University Press,
  Cambridge, 1996.
\newblock  Reprint of the fourth (1927) edition.

\end{thebibliography}

\end{document}